\newtheorem{mainthm}{Theorem}
\newtheorem{thm}{Theorem}
\numberwithin{thm}{subsection}
\numberwithin{equation}{section}
\newtheorem{lem}[thm]{Lemma}
\newtheorem{remark}[thm]{Remark}
\newtheorem{prop}[thm]{Proposition}
\newtheorem{cor}[thm]{Corollary}
\theoremstyle{definition}
\newtheorem{eg}[thm]{Example}
\newcommand{\C}{\mathbb{C}}
\newcommand{\Ct}{\mathbb{C}{(\!(\!\hspace{0.7pt}t\hspace{0.7pt}\!)\!)}}
\newcommand{\Cu}{\mathbb{C}{(\!(\!\hspace{0.7pt}u\hspace{0.7pt}\!)\!)}}
\newcommand{\D}{\mathbb{D}}
\newcommand{\Q}{\mathbb{Q}}
\newcommand{\sing}{\mathrm{sing}}
\newcommand{\X}{\mathscr{X}}
\renewcommand{\P}{\mathbb{P}}
\renewcommand{\O}{\mathcal{O}}
\newcommand{\an}{\mathrm{an}}
\newcommand{\An}{\mathrm{An}}
\newcommand{\R}{\mathbb{R}}
\newcommand{\Log}{\mathrm{Log}}
\newcommand{\hyb}{\mathrm{hyb}}
\newcommand{\N}{\mathbb{N}}
\newcommand{\ord}{\mathrm{ord}}
\renewcommand{\L}{\mathcal{L}}
\newcommand{\Res}{\mathrm{Res}}
\newcommand{\supp}{\mathrm{supp}}
\newcommand{\Spec}{\mathrm{Spec \ }}
\newcommand{\Sk}{\mathrm{Sk}}
\newcommand{\Hom}{\mathrm{Hom}}
\newcommand{\Z}{\mathbb{Z}}
\renewcommand{\div}{\mathrm{div}}
\newcommand{\red}{\mathrm{red}}
\newcommand{\Y}{\mathscr{Y}}
\newcommand{\M}{\mathcal{M}}
\renewcommand{\tilde}{\widetilde}
\newcommand{\I}{\mathcal{I}}
\newcommand{\reg}{\mathrm{reg}}
\title[Non-Archimedean degenerations of volume forms]{Convergence of volume forms on a family of log-Calabi-Yau varieties to a non-Archimedean measure}
\author{Sanal Shivaprasad}
\date{\today}
\begin{document}

\begin{abstract}
  We study the convergence of volume forms on a degenerating holomorphic family of log-Calabi-Yau varieties to a non-Archimedean measure, extending a result of Boucksom and Jonsson. More precisely, let $(X,B)$ be a holomorphic family of sub log canonical, log-Calabi-Yau complex varieties parameterized by the punctured unit disk. Let $\eta$ be a meromorphic volume form on $X$ with poles along $B$. We show that the (possibly infinite) measures induced by the restriction of the $\eta$ to a fiber converge to a measure on the Berkovich analytification as we approach the puncture. The convergence takes place on a hybrid space, which is obtained by filling in the space $X \setminus B$ with the aforementioned Berkovich space over the puncture. 
\end{abstract}

\maketitle

\section{Introduction}
If $Y$ is an irreducible, normal and compact complex analytic space such that we have a top-dimensional meromorphic form $\eta$ on the smooth locus, $Y^\reg$, and a divisor $D \subset Y$ such that $\eta$ is holomorphic and does not vanish on $Y^\reg \setminus D$ and has poles given exactly by $D$, then the pair $(Y,D)$ is called log-Calabi-Yau. Any two such forms $\eta$ and $\eta'$ on $Y^\reg$ which have poles given by $D$ will be unique up to a scalar factor. The form $\eta$ gives rise to the volume form $i^{(\dim Y)^2}\eta \wedge \overline{\eta}$ on $Y^\reg \setminus D$, and thus a positive measure on $Y$. For a log-Calabi-Yau $(Y,D)$, this measure is unique up to scaling. Note that locally near $D$ and $Y^\sing$, it is possible for the mass to be infinite. When $D = 0$, $Y$ is said to be Calabi-Yau.

Families of (log-)Calabi-Yau varieties appear in many settings, for example in geometry and mirror symmetry \cite{Bat93}. It would be natural to ask how this canonical measure varies along families of log-Calabi-Yau varieties. The main motivation for our problem comes from \cite{BJ}, where Boucksom and Jonsson studied this canonical measure along  families of Calabi-Yau varieties. We extend some of the results to families of log-Calabi-Yau varieties.

Let $X \to \D^*$ be a proper flat family of irreducible normal complex analytic spaces. Let $B \subset X$ be a $\Q$-Weil divisor such that $K_{X/\D^*} + B$ is $\Q$-Cartier and is $\Q$-linearly equivalent to 0. Then, $(X_t,B|_{X_t})$ is log-Calabi-Yau for $| t | \ll  1$. Using the above recipe, we can obtain measures $\mu_t$ on each of the fibers $X_t$ for $t \ll 1$. Two such families $\mu_t$ and $\mu'_t$ would differ by a factor of $|h(t)|^{2/m}$, where $h$ is a holomorphic function on $\D^*$ and $m$ is an integer.  Our goal is to understand if the measure $\mu_t$  converge in some sense as $t \to 0$.

meromorphic form on $X^{\reg}_t$ with poles along $B_t \cap X_t^\reg$ which gives rise to a volume form on $X^{\reg}_t \setminus B_t$, and hence a (possibly infinite) positive  measure, $\mu_t$ on $X_t \setminus B_t$. Even if $m \neq 1$, we can still obtain a measure $\mu_t$ on $X_t \setminus B_t$ using $\eta$ (For more details, see Section \ref{TheConvergenceTheoremSection}).  We would like to understand the asymptotics and the convergence of $\mu_t$ as $t \to 0$. 
top-dimensional meromorphic form on $X_t$, with poles along $B_t := B|_{X_t}$. Then, $i \eta_t \wedge \overline{\eta_t}$ is a volume form on $X_t \setminus B_t$ which gives rise to a measure $\mu_t$ on $X_t \setminus B_t$.  

One way to study the convergence would be to think of the measures $\mu_t$ as being measures on $X$ with support $X_t$. However, since there is no fiber over the origin, the measures $\mu_t$ converge weakly to the zero measure on $X$ as $t \to 0$, which is not very interesting. This is where non-Archimedean geometry comes in handy.  

We restrict our attention to the case when

\begin{itemize}

\item the pair $(X,B)$ is projective and meromorphic over $\D^*$ i.e.\ $X$ is a closed subset of $\P^N \times \D^*$ for some $N \in \N$ and $X$ and $B$ are cut out by homogeneous polynomials whose coefficients are holomorphic function on $\D^*$ and meromorphic on $\D$.  
  
\item the pair $(X,B)$ has analytical singularities at $0$, i.e. there exists a proper variety $\X$ over $\D$ with $\X|_{\D^*} \simeq X$, there exists a line bundle $\L$ extending $K_{X/\D^*} + B$, and a global section $\psi$ of $\L$ which extends the generating section of $K_{X/\D^*}$ used to define $\mu_t$. Such an $\X$ is called a model of $X$. 
\end{itemize}

For $(X,B)$ satisfying the first condition above, we can can construct varieties $X_{\Ct}$ and $B_{\Ct}$ over the non-Archimedean field $\Ct$ by considering the coefficients of the polynomials cutting out $X$ and $B$ as elements of $\Ct$.

The Berkovich analytification of a variety $Y$ over the field $\Ct$, denoted $Y^\an$, is a topological space whose points are valuations on the residue fields of (scheme)  points in $Y$  that extend the $t$-adic valuation on $\Ct$ \cite{berkovich1993etale} \cite{Ber90}. By considering the Berkovich analytifications, we obtain compact Hausdorff spaces $X_{\Ct}^\an$ and $B_{\Ct}^\an$.

The main tool that we use to study the asymptotics of $\mu_t$ is a hybrid space. Various hybrid spaces, i.e.~spaces which are obtained by gluing complex analytic spaces with  non-Archimedean spaces, have been constructed in the literature. They have been used to study compactifications \cite{Oda17} and degenerations \cite{Fav16} \cite{BJ} \cite{Sch19}. Hybrid spaces were used in \cite{BJ} to answer to prove Theorem \ref{MainTheoremConvergenceLogCanonical} below for sub-klt pairs $(X,B)$. 
Following \cite{KS} \cite{Ber09} \cite{BJ}, we  construct a hybrid topological space $(X,B)^\hyb$, which as a set is a disjoint union of $X \setminus B$ and $X_{\Ct}^\an \setminus B^\an_{\Ct}$. The topology on the hybrid space is given by the logarithmic rate of convergence (See Section \ref{HybridSpaceSection}  for more details). 

We have the following convergence theorem for measures on $(X,B)^\hyb$. 
\begin{mainthm}
\label{MainTheoremConvergenceLogCanonical}
Suppose $(X,B)$ is as above. In addition, assume that the pair $(X,B)$ is sub-log-canonical. Then, there exists a measure $\mu_0$ on $X_{\Ct}^\an \setminus B_{\Ct}$ and constants $d \in \N$ and $\kappa_{\min} \in \Q$ such that the measures $\frac{\mu_t}{|t|^{2\kappa_{\min}} (2\pi\log|t|^{-1})^d}$ converge weakly to $\mu_0$, when viewed as measures on $(X,B)^\hyb$.  
\end{mainthm}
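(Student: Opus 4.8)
The plan is to reduce the statement to an explicit local computation on a log-smooth model of the family, following the strategy of Boucksom and Jonsson for the sub-klt case while tracking the boundary $B$ throughout.

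First I would replace the model $\X$ by a log resolution so that the special fibre $\X_0 = \sum_{i \in I} b_i E_i$, the Zariski closure of $B$, and all exceptional divisors together form a simple normal crossings divisor; the analytical singularities hypothesis allows the generating section $\psi$, and hence the form $\eta$, to be extended across this model. Such a model comes equipped with its dual complex, the skeleton $\Sk(\X) \subset X_{\Ct}^\an$, and the retraction $r_{\X} \colon X_{\Ct}^\an \to \Sk(\X)$, under which each divisorial valuation $\ord_{E_i}$ is a vertex. To each $E_i$ I would attach a rational weight $\kappa_i$ comparing the order of vanishing of $\eta$ along $E_i$ with the multiplicity $b_i$; these assemble into a piecewise-affine function $\kappa$ on $\Sk(\X)$, essentially the log discrepancy function of $(X,B)$ computed from $\eta$. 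Sub-log-canonicity is precisely what guarantees that $\kappa$ is bounded below on the whole skeleton, so that $\kappa_{\min} := \min \kappa$ is finite and attained; I would then set $\Sk_{\min} := \{\kappa = \kappa_{\min}\}$, let $d = \dim \Sk_{\min}$, and define the candidate limit $\mu_0$ to be the integral-affine (Lebesgue) measure carried by the $d$-dimensional faces of $\Sk_{\min}$, viewed inside $X_{\Ct}^\an \setminus B_{\Ct}^\an$. A separate verification, via weak factorization and the intrinsic description of the essential skeleton, shows that $\mu_0$, $\kappa_{\min}$ and $d$ do not depend on the chosen model.

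The analytic core is a chart-by-chart computation. In a polydisc where the divisor is $\{z_1 \cdots z_k = 0\}$, the family takes the form $t = u \prod_j z_j^{b_j}$ and $\eta = g \prod_j z_j^{c_j} \omega$ with $g$ a local unit and $\omega$ a relative generator. Passing to logarithmic coordinates $\rho_j = \log|z_j| / \log|t|^{-1}$, restricting to the fibre $X_t$, integrating a test function and dividing by $|t|^{2\kappa_{\min}} (2\pi \log|t|^{-1})^d$, the fibre integral converges as $t \to 0$ to an integral over the simplex $\{\rho_j \ge 0, \ \sum_j b_j \rho_j = 1\}$ of the test function evaluated at the corresponding point of $\Sk(\X)$. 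Here the factor $|t|^{2\kappa_{\min}}$ extracts the dominant exponential rate, while the angular integrations supply the powers of $2\pi$ and the radial extent of the minimal face supplies the powers of $\log|t|^{-1}$, together producing $(2\pi \log|t|^{-1})^d$; contributions from faces where $\kappa > \kappa_{\min}$, or from minimal faces of dimension strictly less than $d$, are smaller by a positive power of $|t|$ or of $\log|t|^{-1}$ and vanish in the limit. Summing over a partition of unity subordinate to such charts, and using the continuity of $r_{\X}$ for the hybrid topology (the logarithmic rate of convergence), yields the asserted weak convergence on $(X,B)^\hyb$.

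The step I expect to be the main obstacle is the interaction between the vertical degeneration and the horizontal poles of $\eta$ along $B$. In contrast with the sub-klt case treated by Boucksom and Jonsson, here $\mu_t$ has infinite mass near $B_t$ and near the log-canonical centres, so the fibre integrals are only conditionally convergent and carry logarithmic divergences of their own. The crux is to prove a uniform estimate, near the strata meeting the closure of $B$, that separates the pole directions from the degeneration directions and shows that these boundary divergences are exactly absorbed into the global normalization rather than producing extra mass; in particular one must rule out any escape of mass onto $B_{\Ct}^\an$ in the limit. Establishing this uniformity, and thereby that the limit $\mu_0$ is a genuine Radon measure on $X_{\Ct}^\an \setminus B_{\Ct}^\an$, is where the sub-log-canonical hypothesis does its real work.
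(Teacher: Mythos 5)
Your overall architecture---pass to a log resolution, compute fiber integrals in log-polar coordinates on charts adapted to an snc model, glue with a partition of unity, and remove model-dependence via an inverse limit over models---is indeed the paper's strategy. But your candidate limit data is misidentified, and at exactly the point where this theorem goes beyond the sub-klt case of \cite{BJ}. You place the weight function $\kappa$ on the compact skeleton $\Sk(\X)$ (with vertices the $\ord_{E_i}$), set $\Sk_{\min}=\{\kappa=\kappa_{\min}\}$, $d=\dim \Sk_{\min}$, and take $\mu_0$ to be the Lebesgue-type measure on the $d$-dimensional faces of $\Sk_{\min}$. In the paper the limit lives on the dual complex of the \emph{pair}, $\Delta(\X,B)$, which has unbounded faces: a stratum $Y$ inside $E_I\cap B_J$ contributes $\sigma_Y=\{\sum_{i\in I}b_ix_i=1\}\subset \R_{\geq 0}^{|I|}\times\R_{\geq 0}^{|J|}$, the relevant subcomplex $\Delta(\L)$ consists of faces with $\kappa_i=\kappa_{\min}$ for all $i\in I$ \emph{and} $\beta_j=1$ for all $j\in J$, and crucially $d=\dim\Delta(\L)$ counts the horizontal directions with $\beta_j=1$. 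Your candidate is falsified already by the constant toric family (Examples \ref{EgToricVarieties} and \ref{EgToricVariety2}): for $\X=Y\times\D$ with $Y$ a smooth projective toric compactification of $(\C^*)^n$ and $B$ the boundary, $\X_0$ is irreducible and reduced, so $\Sk(\X)$ is a single point, your $d$ would be $0$ and your $\mu_0$ a Dirac mass, whereas the correct normalization is $(2\pi\log|t|^{-1})^n$ and $\mu_0$ is Lebesgue measure on $N_\R\simeq\R^n$, an infinite measure on the unbounded complex. Relatedly, your assertion that sub-log-canonicity is what makes $\kappa$ bounded below cannot be right as you state it: on the compact skeleton $\Sk(\X)$ boundedness below is automatic for any coefficients; sub-log-canonicity matters on the unbounded rays, where a coefficient $\beta_j>1$ makes the weight decrease without bound and destroys convergence altogether (Example \ref{SubLogCanonicalNecessary}).

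The same misreading surfaces in your ``main obstacle'' paragraph: the divergences near the $\beta_j=1$ components of $B$ are \emph{not} absorbed into the global normalization, nor is escape of mass toward $B$ something to rule out; in the hybrid topology that mass spreads along the unbounded faces of $\Delta(\X,B)$ and survives in the limit, which is precisely why $\mu_0$ is in general only locally finite. The components with $\beta_j<1$ are the ones contributing decaying factors, of order $(\log|t|^{-1})^{-\#\{j\,:\,\beta_j<1\}}$, which is the single genuinely new estimate in the local computation (Equation \eqref{eqnNew} in the proof of Lemma \ref{MainLemmaConvergenceThorem}). Two further steps your sketch elides: first, since $(\X,B)^\hyb$ is not compact, the Stone--Weierstrass gluing used in \cite{BJ} is unavailable, and the chart-local limits can only be assembled into weak convergence after establishing the uniform mass bound $\limsup_{t\to 0}\mu_t(L)<\infty$ for compact $L\subset(\X,B)^\hyb$ (Lemma \ref{CompactSetFiniteVolume}); second, since $X$ itself may be singular, proving convergence upstairs on $(Y,B')^\hyb$ for a log resolution $\pi:(Y,B')\to(X,B)$ does not finish the proof---one must realize $(X,B)^\hyb$ (via the Banach-ring analytification over $A_r$), check $(\pi^{\An}_{A_r})_*\mu'_t=\mu_t$ by change of variables, and use continuity of $\pi^{\An}_{A_r}$ to push the convergence down, as in the paper's final subsection.
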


The measure $\mu_0$ is easy to describe when $(X,B)$ is log-smooth i.e. when $X$ is smooth and $B$ has snc support(See Section \ref{SectionConvergenceOfMeasure}). In this case, the support of $\mu_0$ is the locus where a certain weight function associated to $(X,B,\eta)$, constructed in \cite{MN} \cite{BM19}, is minimized. The minimizing locus of the weight function is called as the essential skeleton in the literature, and thus we have that the our measure $\mu_0$ is supported on the essential skeleton. In general, the support of $\mu_0$ is the image of a skeleton under a birational map $(X',B') \to (X,B)$, and its support is the generalization of the essential skeleton constructed by Temkin in \cite{Tem16}.  
If the pair $(X,B)$ is not sub-log-canonical, then there is no reasonable convergence in this non-Archimedean setting (See Example \ref{SubLogCanonicalNecessary} for more details). This is consistent with the observation that the essential skeleton of $(X,B,\eta)$ is empty when $(X,B)$ is not sub-log-canonical. 

As an application of Theorem \ref{MainTheoremConvergenceLogCanonical}, get a convergence result for a torus $T = (\C^*)^n$. We have  a canonical embedding $\R^n \hookrightarrow T_\Ct^{\an}$ given by sending $r \in \R^n$ to the valuation $\sum_{m \in \Z^n}a_mz^{m} \mapsto \max_{m}\{ |a_m|e^{\langle r, m \rangle} \}$. Consider the constant family $T \times \D^*$ and the associated hybrid space $(T \times \D^*) \cup T_\Ct^\an$. Then by applying Theorem \ref{MainTheoremConvergenceLogCanonical} to a smooth projective toric compactification of $T$ we get that as $t \to 0$, the Haar measure on $T \times \{t\}$ scaled by a factor of $\frac{1}{(2\pi \log |t|^{-1})^n}$ converge weakly to the Lebesgue measure on $\R^n$. See Examples \ref{EgToricVarieties} and  \ref{EgToricVariety2} for more details.

The motivation for this problem comes from \cite{BJ}, where the case for smooth $X$ and $B = 0$  \cite[Theorem A]{BJ} and the case for sub-klt pairs \cite[Theorem 8.4]{BJ} is studied. The essential difference in our scenario is that the measures $\mu_t$ are no longer finite measures when we drop the assumption that $B$ is sub-klt.

For a smooth $X$ and a smooth model $\X$ of $X$, there is an associated CW complex $\Delta(\X)$ given by the dual intersection complex of the central fiber $\X_0$. 
In \cite{BJ}, Boucksom and Jonsson construct a locally compact Hausdorff hybrid space $\X^\hyb$ over $\D$, whose fiber over $\D^*$ is $X$ and the fiber over $0$ is $\Delta(\X)$. Then, they show that the measures $\mu_t$, scaled appropriately, converge to a weighted Lebesgue measure $\mu_0$ on a subcomplex of $\Delta(\X)$. Using this, they show a convergence of the measures to a measure on $X^\an$. 

We will employ a similar approach. To prove Theorem \ref{MainTheoremConvergenceLogCanonical}, we first prove Theorem \ref{ThmA} below, which shows the convergence on certain skeletal subsets of $X^\an \setminus B^\an$. Since our measures are no longer finite, we would have to allow for the limit measures to be infinite and this would not be possible if we use Lebesgue measure on a compact simplicial complex. The solution is to allow our simplices to have unbounded faces. Now assume that $(X,B)$ is log-smooth and pick a model $\X$ such that $(\X_0 + \overline{B})_\red$ is an snc divisor. A good candidate for this is $\Delta(\X,B)$, the dual intersection complex of a pair, introduced in \cite{Tyo12} \cite{BPR13} \cite{BPR16} in the one-dimensional case and in \cite{gubler2016skeletons} \cite{BM19} for higher dimensions.

 Let $\X_0 = \sum_{i} b_i E_i$ and let $Y$ be a stratum of $\X_0 + \overline{B}$ i.e.\ a connected component of $\left(\bigcap_{i \in I} E_i\right) \cap \left(\bigcap_{j \in J} \overline{B}_j\right)$. Then, we get a face $\sigma_{Y} = \{(\underline{r},\underline{s}) \in \R_{\geq 0}^{|I| + |J|} | \sum_{i \in I}b_i r_i = 1 \}$ of $\Delta(\X,B)$. These faces are then glued together via some attaching maps to get $\Delta(\X,B)$. 

 Associated to such a model,  we construct a similar hybrid space $(\X,B)^\hyb = (X \setminus B) \cup \Delta(\X,B)$, where the topology is given by logarithmic rate of convergence. We prove the following convergence theorem on the hybrid space. Note that for Theorem \ref{MainThmSmoothLC}, we don't need to assume that $(X,B)$ is projective.

\begin{mainthm}
	\label{MainThmSmoothLC}
	\label{ConvergenceTheorem}
	\label{ThmA}
	Let $X \to \D^*$ be a holomorphic family of proper complex manifolds. Let $B$ be a snc $\Q$-divisor such that $K_{X/\D^*} + B \sim_{\Q} 0$ and  the pair $(X,B)$ is sub-log-canonical i.e.~if $B = \sum_j \beta_j B_j$ for prime divisors $B_j$, then $\beta_j \leq 1$ for all $j$. 
        Let $\X$ be a smooth proper model of $X$ such that $\X_0 + B$ is snc, let $\L$ extend $K_{X/\D^*} + B$ and let $\psi \in H^0(\X,m\L)$ be a generating section for sufficiently divisible $m$ and let $\mu_t$ be the measure induced on $X_t$ by $\psi$. Then, there exists a `Lebesgue-type' measure $\mu_0$ supported on a subcomplex $\Delta(\L)$ of $\Delta(\X,B)$ and explicit constants $d \in \N$, $\kappa_\min \in \Q$ such that 
	$$ \frac{\mu_t}{|t|^{2\kappa_{\min}} (2\pi\log|t|^{-1})^d} \to \mu_0$$
	converges weakly as measures on $(\X,B)^\hyb$. 
	
\end{mainthm}

For precise details, see Section \ref{SectionConvergenceOfMeasure}.

We can view $\Delta(\X)$ and $\Delta(\X,B)$ as subsets of the Berkovich analytification, $X_{\Ct}^\an$. Moreover, $\Delta(\X)$ is a strong deformation retract of $X_{\Ct}^\an$ and its image is denoted as $\text{Sk}(\X)$ in the literature.

Theorem \ref{MainTheoremConvergenceLogCanonical} follows from Theorem \ref{ThmA} by using the following trick which was used in \cite{BJ}. The collection of $\Delta(\X)$ for all smooth proper snc models $\X$ is a directed system and $X_{\Ct}^\an \simeq \varprojlim_{\X}\Delta(\X)$ (See \cite[Theorem 10]{KS}, \cite[Corollary 3.2]{BFJ16}). 
We prove a similar result (see Theorem \ref{LimitSkeletonIsNonArchimedean})  that
$$X_{\Ct}^\an \setminus B_{\Ct}^\an \simeq \varprojlim_{\X}\Delta(\X,B).$$ The topology on  $(X,B)^\hyb$ is in fact given by $(X,B)^\hyb = \varprojlim_{\X}(\X,B)^\hyb$ , which immediately proves Theorem \ref{MainTheoremConvergenceLogCanonical} for smooth $X$. 

For a general $(X,B)$, by taking a  log resolution $(X',B') \to (X,B)$, and using Theorem \ref{ThmA} for $(X',B')$, we are able to prove Theorem \ref{MainTheoremConvergenceLogCanonical}. 

It would interesting to see the application of Theorem \ref{MainTheoremConvergenceLogCanonical} to various examples of log-Calabi-Yau varieties that are available in the literature \cite{Man19} to see if it yields any interesting results.

In \cite{JN19}, Jonsson and Nicaise prove a $p$-adic version of \cite{BJ}, where they consider the measure induced by a pluricanonical form $\eta$ on a smooth proper variety $X$ over a local field $K$. They show that the measures induced by $\{ \eta \otimes_K K' \}_{K'}$ for all finite tame extensions $K'$ of $K$ converge to a Lebesgue type measure on the Berkovich analytification.   
The measures considered in \cite{JN19} are finite and it would be interesting to see whether it would be possible generalize Theorem \ref{MainTheoremConvergenceLogCanonical} to this $p$-adic setting to extend the result to a family of infinite measures as well. 

\subsection*{Structure of the paper} The paper is structured as follows. In Section \ref{SectionDualSimplices}, we recall the construction of the dual complex $\Delta(\X,B)$ associated to an snc model $\X$ of a log smooth pair $(X,B)$ and in Section \ref{HybridSpaceSection},  we recall the construction of the the hybrid space $(\X,B)^\hyb$, associated to a model $\X$. 
In Section \ref{SectionConvergenceOfMeasure}, we prove Theorem \ref{ThmA}. 
In Section \ref{SectionNonArchimedean}, we construct the space $(X,B)^\hyb$, realize it and its the central fiber as a non-Archimedean space and prove Theorem \ref{MainTheoremConvergenceLogCanonical}.

\subsection*{Acknowledgments} I would like to thank my advisor, Mattias Jonsson, for suggesting this problem, and also for his support and  guidance. This work was supported by the NSF grant DMS-1600011.

\section{The dual simplicial complex associated to an snc model}
\label{SectionDualSimplices}
In this section, we recall the notion of a model and construct the dual intersection complex associated to an snc model of a log smooth pair $(X,B)$. 
Let $X$ be a holomorphic flat family of compact manifolds parametrized by $\D^*$ i.e.\ $X$ is a smooth complex manifold with a proper flat map $X \to \D^*$. Let $B$ be an snc $\Q$-divisor in $X$. Write $B = \sum_j \beta_j B_j$, where $\beta_j \in \Q$ and $B_j$ are prime divisors. In this section, we don't need to assume that $(X,B)$ is projective. 

\subsection{Models of $(X,B)$}
A model of a pair $(X,B)$ is a complex analytic space $\X$ flat over $\D$ such that  we have a specified isomorphism $\X|_{\D^*} \simeq X$ as spaces over $\D^*$. We say that a model $\X$ is snc if $\X$ is smooth and $(\X_0 + \overline{B})_\red$ is an snc divisor in $\X$. We say that $\X$ is proper if $\X$ is proper over $\D$. 
 By abuse of notation, we will also denote the closure of $B$ in $\X$ by $B$ as well. Let $\X_0 = \sum_i b_i E_i$ denote the central fiber, where $E_i$ are irreducible components of the central fiber and denote $D = \X_0 + B$.

 By Hironaka's resolution of singularities, given a proper model $\X$ of $(X,B)$, we can always produce a proper snc model $\X'$ of $(X,B)$ such that we have a proper map $\X' \to \X$ which commutes with the projection to $\D$.

\subsection{The dual complex}
To an snc model $\X$ of $(X,B)$, we can associate a CW complex (with possibly open faces) $\Delta(\X,B)$, called the dual complex, as follows. A non-empty connected component $Y$ of $E_{i_0} \cap \dots \cap E_{i_p} \cap B_{j_1} \cap \dots \cap B_{j_q}$ is called a stratum. Associated to the stratum $Y$, we have a face $$\sigma_Y = \{(x_0,\dots,x_p,y_1\dots,y_q) \in \R_{\geq 0}^{p+1} \times \R_{\geq 0}^q | \sum_i b_i x_i = 1\} \subset \R_{\geq 0}^{p+1} \times \R_{\geq 0}^q.$$

If $Y'$ is a stratum that contains $Y$, then we have attaching maps $\sigma_{Y'} \hookrightarrow \sigma_Y$ given by sending the extra coordinates to $0$. For example, if $Y' = E_{i_0} \cap \dots \cap E_{i_p'} \cap B_{j_1} \cap \dots \cap B_{j_q'}$ for some $p' \leq p$ and $q' \leq q$, then the map $\sigma_{Y'} \hookrightarrow \sigma_{Y}$ is given by $$(x_0,\dots,x_{p'},y_1,\dots,y_{q'}) \mapsto (x_0,\dots,x_{p'},0,\dots,0,y_1,\dots,y_{q'},0,\dots,0).$$ 

The union of all such faces corresponding to all the strata in $\X_0$ for all $p, q \geq 0$ along with the attaching maps, give rise to the CW complex $\Delta(\X,B)$. For example, if $\dim(X_t) = 1$, then $\Delta(\X,B)$ is the dual graph complex of $D$ with the vertices corresponding to $B$ removed. The dual complex of a pair was introduced in \cite{gubler2016skeletons} \cite{BM19}. 

The complex $\Delta(\X) := \Delta(\X,0)$, used in \cite{BJ}, is just the subcomplex of $\Delta(\X,B)$ consisting of all the bounded faces.

\begin{eg}[The dual complex associated to $\P^1 \times \D$]
	\label{P1DualComplex}
	Let $X = \P^1 \times \D^*$, with projection to $\D^*$, and $B = \{0\} \times \D^* + \{\infty\} \times \D^*$.  Consider the model $\X = \P^1 \times \D$. Then, the dual complex $\Delta(\X,B)$ is homeomorphic to $\R$, with $0$ being the vertex $\sigma_{\P^1 \times \{0\}}$, the positive axis being identified with $\sigma_{(0,0)}$ and the negative axis with $\sigma_{(\infty,0)}$. See Figure \ref{Figure1}. 
\end{eg}

\begin{figure}
	\centering
	\begin{tikzpicture}
	\draw [<->] (-3,0)-- (0,0) -- (3,0);
	\draw [fill] (0,0) circle [radius = 0.05];
	\node [below] at (0,0) {$\sigma_{\P^1 \times \{0\}}$};
	\draw[decoration={brace,mirror,raise=5pt},decorate] (0.6,0) -- node[below=6pt] {$\sigma_{(0,0)}$} (2.9,0);
	\draw[decoration={brace,mirror,raise=5pt},decorate] (-2.9,0) -- node[below=6pt] {$\sigma_{(\infty,0)}$} (-0.6,0);
	\end{tikzpicture}
	\caption{The dual complex $\Delta(\X,B)$ for $\X = \P^1 \times \D$ and $B = \{0, \} \times \D + \{\infty\} \times \D $}
		\label{Figure1}
\end{figure}
\subsection{Integral piecewise affine structure on the dual intersection complex}
We briefly discuss some results related to the natural integral piecewise affine structure on $\Delta(\X)$. The reader can take a look at \cite{Ber99}, \cite{Ber04} and \cite[Section 1.3]{BJ} for more details. Given a polytope $\sigma = \{(x_0,\dots,x_p) | \sum_{i=0}^pb_i x_i = 1 \} \times \R_{\geq 0}^q$, consider $M_{\sigma}$, the collection of affine linear functions with integral coefficients on $\R_{\geq 0}^{p+1+q}$ restricted to $\sigma$ (and two such functions are identified if they are equal on $\sigma$). Denote $b_\sigma := \gcd(b_0,\dots,b_p)$.

Let $\textbf{1}_{\sigma} \in M_\sigma$ denote the constant function $1$ on $\sigma$.  The evaluation map $\sigma \to (M_\sigma)_\R^{\vee}$ realizes $\sigma$ as a (possibly unbounded) polytope of codimension one in $(M_\sigma)_\R^{\vee}$ contained in the affine plane $\{\nu | \nu(\textbf{1}_\sigma) = 1 \}$. So, the tangent space of $\sigma$ in $(M_\sigma)_\R^{\vee}$ can be realized as $(\overrightarrow{M_\sigma})_{\R}^\vee$, where 
$$ \overrightarrow{M_\sigma} = M_\sigma/{\Q \textbf{1}_{\sigma} \cap M_{\sigma}} $$

The Lebesgue measure on $(\overrightarrow{M_\sigma})_{\R}^\vee$, normalized with respect to the lattice $\mathrm{Hom}_{\mathbb{Z}}(\overrightarrow{M_\sigma}), \mathbb{Z})$ gives rise to a measure on $\sigma$. This is called the normalized Lebesgue measure $\lambda_\sigma$ of $\sigma$. The following remark, stated with a typo in \cite[Remark 1.3]{BJ}, gives an explicit description of the normalized Lebesgue measure, which will be useful for computations. We provide a quick proof here for the convenience of the reader.  

\begin{prop}[{\cite[Remark 1.3]{BJ}}]
	\label{NormalizedLebesgueMeasureAnalysis}
	Let $b_0,\dots,b_p \in \N_{+}$ and let $$\sigma = \{(x_0,\dots,x_p,y_1,\dots,y_q) \in \R^{p+q+1}_{\geq 0} | \sum_{i=0}^pb_i x_i = 1 \}$$ be a polytope. Then, we have a homeomorphism $$\sigma \to \{(x_1,\dots,x_p,y_1,\dots,y_q) \in \R^{p+q} | \sum_{i=1}^pb_i x_i \leq 1 \},$$ where we can recover $x_0$ by $x_0 = b_{0}^{-1}(1 - \sum_{i=1}^p b_i x_i)$. Under this homeomorphism, the normalized Lebesgue measure is given by  $$\lambda_\sigma = b_{\sigma}b_0^{-1}| dx_1 \wedge \dots \wedge dx_p \wedge dy_1 \wedge \dots \wedge dy_q |$$
\end{prop}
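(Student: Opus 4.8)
The plan is to unwind the definition of the normalized Lebesgue measure $\lambda_\sigma$ into an explicit computation with the lattice $N_\sigma := \Hom_\Z(\overrightarrow{M_\sigma},\Z)$, and then compare its covolume to standard Lebesgue measure under the coordinate projection that forgets $x_0$. The stated homeomorphism is nothing but this linear projection $\pi$, since the constraint $\sum_{i=0}^p b_i x_i = 1$ together with $x_0 \geq 0$ is equivalent to $\sum_{i=1}^p b_i x_i \leq 1$ after solving $x_0 = b_0^{-1}(1 - \sum_{i \geq 1} b_i x_i)$; so all the content is in the measure computation.

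First I would make $M_\sigma$ concrete. The integral affine functions on $\R^{p+1+q}$ form the free lattice on $x_0,\dots,x_p,y_1,\dots,y_q$ together with the constant $\mathbf 1$, and I would check that the integral affine functions vanishing on $\sigma$ are exactly the integer multiples of $\sum_{i=0}^p b_i x_i - \mathbf 1$ (a real multiple of this has constant term forcing the scalar to be an integer). Hence $M_\sigma$ is free with basis $x_0,\dots,x_p,y_1,\dots,y_q$, and under this identification $\mathbf 1_\sigma = \sum_{i=0}^p b_i x_i$. Next I would compute $\overrightarrow{M_\sigma} = M_\sigma/(\Q\mathbf 1_\sigma \cap M_\sigma)$: the saturation $\Q\mathbf 1_\sigma \cap M_\sigma$ is generated by the primitive vector $b_\sigma^{-1}(b_0,\dots,b_p,0,\dots,0)$, because the smallest rational multiple of $(b_0,\dots,b_p)$ that is integral has scale $1/b_\sigma$. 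Dualizing, $N_\sigma$ is the annihilator of this primitive vector, i.e. the lattice of integral tangent vectors
$$ N_\sigma = \{(\alpha_0,\dots,\alpha_p,\gamma_1,\dots,\gamma_q) \in \Z^{p+1+q} \mid \textstyle\sum_{i=0}^p b_i\alpha_i = 0\}, $$
sitting inside the tangent space $\vec H = \{\sum b_i \dot x_i = 0\}$ of $\sigma$.

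The crux is the fourth step. Since $N_\sigma$ defines $\lambda_\sigma$ by the requirement that $N_\sigma$ have covolume one, and $\pi|_{\vec H}\colon \vec H \to \R^{p+q}$ is a linear isomorphism, I get $\lambda_\sigma = \mathrm{covol}(\pi N_\sigma)^{-1}\,|dx_1 \wedge \cdots \wedge dx_p \wedge dy_1 \wedge \cdots \wedge dy_q|$. The $\gamma$-coordinates surject onto $\Z^q$, contributing covolume one, so it remains to compute the index in $\Z^p$ of $L := \{(\alpha_1,\dots,\alpha_p)\in\Z^p \mid \sum_{i=1}^p b_i\alpha_i \equiv 0 \bmod b_0\}$, which is the obstruction to solving for $\alpha_0 \in \Z$. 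This index equals the order of the image of $\Z^p \to \Z/b_0$, $\alpha \mapsto \sum b_i\alpha_i$, namely the cyclic subgroup generated by $\gcd(b_1,\dots,b_p,b_0) = b_\sigma$, which has order $b_0/b_\sigma$. Therefore $\mathrm{covol}(\pi N_\sigma) = b_0/b_\sigma$, and $\lambda_\sigma = b_\sigma b_0^{-1}\,|dx_1 \wedge \cdots \wedge dx_p \wedge dy_1 \wedge \cdots \wedge dy_q|$, as claimed.

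I expect the main obstacle to be the bookkeeping in the last two steps: keeping straight that $b_\sigma$ enters through the \emph{saturation} defining $\overrightarrow{M_\sigma}$, while $b_0/b_\sigma$ enters through the \emph{index} $[\Z^p:L]$ coming from the projection, and confirming that the normalization convention (lattice of covolume one) forces the density to be the \emph{inverse} covolume. Everything else — the freeness of $M_\sigma$, primitivity of the saturating vector, and the fact that $\pi$ carries $N_\sigma$ to $L \times \Z^q$ — is routine once the identifications are set up.
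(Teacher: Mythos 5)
Your proof is correct and takes essentially the same route as the paper's: both identify $\Hom_{\Z}(\overrightarrow{M_\sigma},\Z)$, in the coordinates $(x_1,\dots,x_p,y_1,\dots,y_q)$, with the sublattice $\{(\alpha,\gamma)\in\Z^{p+q} \mid \sum_{i=1}^p b_i\alpha_i \equiv 0 \bmod b_0\}$ (your $L\times\Z^q$ is the paper's $\ker\phi$) and compute its index to be $b_0/b_\sigma$ from the fact that the image of $\alpha\mapsto\sum_i b_i\alpha_i$ in $\Z/b_0\Z$ is generated by $\gcd(b_0,\dots,b_p)=b_\sigma$. One small quibble with your closing commentary: $b_\sigma$ does not really enter ``through the saturation'' --- the annihilator lattice is unchanged whether one quotients by $\Z\mathbf{1}_\sigma$ or by $\Q\mathbf{1}_\sigma\cap M_\sigma$ --- it enters only through the gcd in the index computation, which you in any case carry out correctly.
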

\begin{proof}
	Note that $1_{\sigma},X_1,\dots,X_p,Y_1,\dots,Y_q$ is a $\R$-basis for $(M_{\sigma})_{\R}$ as an abelian group, where $X_i$ and $Y_j$ denote projection to the $x_i$ and $y_j$ coordinates. Let $1_\sigma^*, X_1^*,\dots,Y_q^*$ denote the dual basis. Then, $X_1^*,\dots,Y_q^*$ is a $\R$-basis for the $(\overrightarrow{M_\sigma})_{\R}^\vee$ and $\Hom_{\mathbb{Z}}(\overrightarrow{M_\sigma}, \mathbb{Z})$ is a sub lattice of $\Lambda = \mathbb{Z}X_1^* + \dots + \mathbb{Z}Y_q^*$.
	
	Note that we can view $\Hom_{\mathbb{Z}}(\overrightarrow{M_\sigma}, \mathbb{Z})$ as the kernel of the map $\phi : \Lambda \to \Z/b_0\Z$ given by $\alpha_1 X_1^* + \dots + \alpha_p X_p^* + \gamma_1 Y_1^* + \dots + \gamma_q Y_q^* \to b_1 \alpha_1 + \dots + b_p \alpha_p + b_0 \Z$. The image of $\phi$ is generated by $b_\sigma$ and the size of the image is $\frac{b_0}{b_\sigma}$. Thus, the index of $\Hom_{\mathbb{Z}}(\overrightarrow{M_\sigma}, \mathbb{Z})$ in $\Lambda$ is $\frac{b_0}{b_\sigma}$, and thus $b_\sigma b_0^{-1}|dx_1\wedge \dots \wedge dx_p \wedge dy_1 \wedge \dots \wedge dy_q|$ is the normalized Lebesgue measure on $\sigma$.	
\end{proof}

\section{The hybrid space associated to a dual complex}
\label{HybridSpaceSection}
In this section, we construct a hybrid space $(\X,B)^\hyb$, associated to an snc model $\X$ of a log-smooth pair $(X,B)$; this is a topological space over $\D$ such that the fiber over $\D^*$ is isomorphic to $X$ and the central fiber is isomorphic to $\Delta(\X,B)$. This construction exactly follows \cite[Section 2.2]{BJ}, where the construction for $B = 0$ was done. 

\subsection{Local Log function}
To construct the hybrid space, we will first construct a $\Log$ function on this space as done in \cite{BJ} and glue $X$ and $\Delta(\X,B)$ using this $\Log$ function. To do this, we first construct a local version of the $\Log$ function. For an open set $U \subset \X$ and for local coordinates $(z,w,y)$ on $U$ where $z = (z_0,\dots,z_p)$, $w = (w_1,\dots,w_q)$ and $y = (y_1,\dots,y_r)$, we say that $(U,(z,w,y))$ is adapted to a stratum $Y =_{\text{locally}} E_{0} \cap \dots \cap E_{p} \cap B_{1} \cap \dots \cap B_{q}$ if 
\begin{itemize}
	\item The only irreducible components of $D$ intersecting $U$ are $E_{0},\dots,E_{p}$, and $B_{1},\dots,B_{q}$
    \item $U \cap (\ E_{0} \cap \dots \cap E_{p} \cap B_{1} \cap \dots \cap B_{q}) = U \cap Y$.
    \item We have $|z_i|,|w_j|,|y_k| < 1$ on $U$ and $E_i \cap U = \{z_i = 0\}$ and $B_j \cap U = \{w_j = 0\}$.
\end{itemize}
In such a case, we can define $\Log_U : U \setminus D \to \sigma_Y$. Let $f_U := z_0^{b_0}\dots z_p^{b_p}$. Then, $t = u\cdot f_U$ for some invertible holomorphic function $u$  on $U$. Define $$\Log_U(z,w,y) = \left(\frac{\log|z_0|}{\log|f_U|},\dots,\frac{\log|z_p|}{\log|f_U|},\frac{\log|w_1|}{\log|f_U|},\dots,\frac{\log|w_q|}{\log|f_U|}\right).$$

\begin{remark}[{\cite[Prop 2.1]{BJ}}]
If $(U,(z,w,y))$ and $(U',(z',w',y'))$ are adapted to a stratum $Y$, then 
$$ \Log_U - \Log_{U'} = O\left(\frac{1}{\log|t|^{-1}}\right)     $$ as $t \to 0$ uniformly on compact subsets of $U \cap U'$.
\end{remark}

\subsection{Constructing the global Log function}
Here, we globalize the log construction by patching up the local log functions and to do so, we will have to find a `nice' open covering of $D$. The following construction, as well as Proposition \ref{GlobalLogWellDefined}  is similar to  \cite[Proposition 2.1]{BJ}, but we provide some more details.

Following \cite[Theorem 5.7]{Cle77}, we can find tubular neighborhoods $U_{I,J}$ of $D_{I,J} := E_{I} \cap B_{J}$ and a smooth projection $\pi_{I,J} : U_{I,J} \to D_{I,J}$ satisfying $U_{I,J} \cap U_{I',J'} = U_{I \cup I', J \cup J'}$. In particular, if $U_{I,J}$ and $U_{I',J'}$ intersect, then $D_{I,J} \cap D_{I',J'} \neq \emptyset$. Also, note that $U_{I,J}$ has as many connected components as $D_{I,J}$ and each connected component $U_Y$ of $U_{I,J}$ corresponds to a stratum $Y \subset E_I \cap B_J$. 

Pick $x \in \X_0$. Suppose $x$ lies in a stratum $Y$. Around $x$, pick an open neighborhood $U_x$ that is adapted to $Y$ and lies in $U_Y$. The union of all such $U_x$ for $x \in \X_0$ covers $\X_0$. Since $\X_0$ is compact, we only need finitely many of these. Call these open sets $U_1, \dots, U_l$ and let their corresponding strata be $Y_1$, \dots, $Y_l$ respectively. Let $\chi_1,\dots,\chi_l$ be a partition of unity with respect to $U_1,\dots,U_l$ and let $V = \bigcup_{\lambda = 1}^l U_\lambda$. Then, $V$ is a neighborhood of $\X_0$. 

\begin{prop}
\label{GlobalLogWellDefined}
The function $\Log_V : V \setminus D \to \Delta(\X,B)$ given by $\Log_V = \sum_{\lambda = 1}^l \chi_\lambda \Log_{U_\lambda}$ is well defined.
\end{prop}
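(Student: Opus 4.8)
The plan is to check, for each fixed point $x \in V \setminus D$, that the finitely many local values $\Log_{U_\lambda}(x)$ appearing in the sum all lie in a single common face of $\Delta(\X,B)$, and that their partition-of-unity–weighted combination stays in that face. The essential combinatorial input will be the intersection property $U_{I,J} \cap U_{I',J'} = U_{I\cup I', J\cup J'}$ of the tubular neighborhoods.

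First I would extract the common face. Fix $x \in V \setminus D$ and let $\Lambda_x = \{\lambda : x \in \supp\chi_\lambda\}$, so that $x \in U_\lambda$ for every $\lambda \in \Lambda_x$; write $Y_\lambda = E_{I_\lambda} \cap B_{J_\lambda}$ for the stratum to which $U_\lambda$ is adapted, whence $x \in U_\lambda \subseteq U_{I_\lambda,J_\lambda}$. Applying the relation $U_{I,J}\cap U_{I',J'}=U_{I\cup I',J\cup J'}$ iteratively over $\lambda\in\Lambda_x$ gives $x\in U_{I,J}$ with $I=\bigcup_\lambda I_\lambda$ and $J=\bigcup_\lambda J_\lambda$; in particular $D_{I,J}=E_I\cap B_J\neq\emptyset$. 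Letting $Z$ be the stratum (connected component of $D_{I,J}$) containing $\pi_{I,J}(x)$, the compatibility of the projections $\pi_{\bullet,\bullet}$ forces $Z\subseteq Y_\lambda$ for every $\lambda\in\Lambda_x$, so each $Y_\lambda$ contains $Z$ and we obtain attaching inclusions $\sigma_{Y_\lambda}\hookrightarrow\sigma_Z$.

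Next I would transport the local values into the single face $\sigma_Z$ and perform the convex combination. Since $f_{U_\lambda}=\prod_{i\in I_\lambda}z_i^{b_i}$, we have $\sum_{i\in I_\lambda}b_i\frac{\log|z_i|}{\log|f_{U_\lambda}|}=1$ with all coordinates nonnegative, so $\Log_{U_\lambda}(x)\in\sigma_{Y_\lambda}$. Because the only components of $D$ meeting $U_\lambda$ are indexed by $(I_\lambda,J_\lambda)$, the attaching map $\sigma_{Y_\lambda}\hookrightarrow\sigma_Z$ merely pads the vector $\Log_{U_\lambda}(x)$ with zeros in the coordinates of $I\setminus I_\lambda$ and $J\setminus J_\lambda$, giving $\iota_\lambda\Log_{U_\lambda}(x)\in\sigma_Z$. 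Each such vector satisfies the affine relation $\sum_{i\in I}b_i x_i=1$ and has nonnegative entries; since $\chi_\lambda(x)\geq 0$ and $\sum_{\lambda\in\Lambda_x}\chi_\lambda(x)=1$, the sum $\Log_V(x)=\sum_{\lambda\in\Lambda_x}\chi_\lambda(x)\,\iota_\lambda\Log_{U_\lambda}(x)$ is a convex combination, so both the affine constraint and nonnegativity are preserved and $\Log_V(x)\in\sigma_Z\subseteq\Delta(\X,B)$.

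Finally I would note independence of auxiliary choices: any other common lower stratum $Z'$ with $Z'\subseteq Y_\lambda$ for all $\lambda$ is comparable to $Z$, and since the attaching maps of $\Delta(\X,B)$ are mutually compatible, the point computed in $\sigma_Z$ agrees with the one computed in $\sigma_{Z'}$ as a point of $\Delta(\X,B)$; thus $\Log_V(x)$ is intrinsic. The main obstacle is the combinatorial step of producing a single stratum $Z$ lying below all the $Y_\lambda$ and checking that the local log vectors are correctly realized as subfaces of $\sigma_Z$ via the attaching maps — this is precisely where the intersection identity $U_{I,J}\cap U_{I',J'}=U_{I\cup I',J\cup J'}$ is indispensable, since without it the values $\Log_{U_\lambda}(x)$ need not share a face and the sum would be meaningless. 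Once the common face is secured, the rest is the elementary convexity verification above.
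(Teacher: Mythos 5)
Your proof is correct and takes essentially the same route as the paper: both use the Clemens intersection property $U_{I,J}\cap U_{I',J'}=U_{I\cup I',J\cup J'}$ to produce a single stratum $Z$ (the paper's $Y'$, the stratum corresponding to the connected component of $\bigcap_\lambda U_{Y_\lambda}$ containing $x$) contained in every $Y_\lambda$, so that each $\sigma_{Y_\lambda}$ sits inside the common face $\sigma_Z$ and the partition-of-unity sum is a convex combination there. Your explicit check that the padded vectors satisfy the affine constraint $\sum_{i\in I}b_i x_i=1$ with nonnegative entries, and that convexity preserves both, is a detail the paper leaves implicit but is the same argument.
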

\begin{proof}
Pick a point $x \in V$. After a possible re-indexing, suppose $x \in (U_1 \cap \dots \cap U_a) \setminus (U_{a+1} \cup \dots \cup U_l)$. Then, $\Log_V(x) = \chi_1(x) \Log_{U_1}(x) + \dots + \chi_a(x) \Log_{U_a}(x)$. For this to make sense, it is enough to find a face $\sigma'$ of $\Delta(\X,B)$ such that $\sigma_{Y_1},\dots,\sigma_{Y_a} \subset \sigma'$. Note that $U_1 \cap \dots \cap U_a \subset U_{Y_1} \cap \dots \cap U_{Y_l}$. Each connected component of $\bigcap_{\lambda = 1}^l U_{Y_\lambda}$ corresponds to a stratum of $\bigcap_{\lambda = 1}^lY_{\lambda}$. Let $Y'$ be the stratum corresponding to the connected component of $\bigcap_{\lambda = 1}^l U_{Y_\lambda}$ containing $x$. Then, $\sigma' := \sigma_{Y'}$ contains $\sigma_{Y_\lambda}$ for all $\lambda = 1,\dots,l$.
\end{proof}

\begin{prop}
\label{LogDiffersBy}
Let $U$ be an open set adapted to a stratum $Y$. Then, $\Log_V - \Log_{U} = O(\frac{1}{\log|t|^{-1}})$ locally uniformly as $t \to 0$, where the equality is interpreted as being true in some faces of $\Delta(\X,B)$ containing $\sigma_Y$, for all $\lambda = 1,\dots,l$. 
\end{prop}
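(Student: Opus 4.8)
The plan is to reduce the statement to a pairwise comparison between $\Log_{U_\lambda}$ and $\Log_U$ and then use the partition of unity. Fix a compact subset $K$ of $(U \cap V) \setminus D$ and a point $x \in K$; after reindexing, suppose the indices $\lambda$ with $x \in U_\lambda$ are $1, \dots, a$, so that $\chi_\lambda(x) = 0$ for $\lambda > a$. Since $\sum_{\lambda=1}^l \chi_\lambda \equiv 1$, near $x$ we may write
$$\Log_V - \Log_U = \sum_{\lambda=1}^a \chi_\lambda\,(\Log_{U_\lambda} - \Log_U).$$
As in the proof of Proposition \ref{GlobalLogWellDefined}, the tubular-neighborhood structure supplies a stratum $Y'$ (the one attached to the connected component of $\bigcap_{\lambda=1}^a U_{Y_\lambda}$ containing $x$) whose face $\sigma_{Y'}$ contains $\sigma_Y$ and each $\sigma_{Y_\lambda}$; this is the face in which the differences above are to be interpreted, via the attaching maps that send the coordinates of the extra components to $0$. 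Because the cover near $K$ is finite, it suffices to bound each term $\Log_{U_\lambda} - \Log_U$ uniformly on compact subsets of $U \cap U_\lambda$ by $O(\frac{1}{\log|t|^{-1}})$, and then use $0 \le \chi_\lambda \le 1$.

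The heart of the argument is therefore a comparison of $\Log_U$ and $\Log_{U_\lambda}$ when $U$ is adapted to $Y$ and $U_\lambda$ is adapted to $Y_\lambda$, generalizing the remark following the local $\Log$ construction (\cite[Prop 2.1]{BJ}), which treats only the case $Y = Y_\lambda$. I would split the coordinates of $\sigma_{Y'}$ into those indexed by components of $D$ that meet $U \cap U_\lambda$ (the \emph{common} components) and the rest. For a common component, say $E_i = \{z_i = 0\} = \{z_i' = 0\}$ in the two coordinate systems, one has $z_i = v\, z_i'$ for a unit $v$, and likewise $f_U = u\, f_{U_\lambda}$ up to a unit (both being $t$ divided by a unit); hence the two associated coordinate functions $\log|z_i|/\log|f_U|$ and $\log|z_i'|/\log|f_{U_\lambda}|$ differ by $O(\frac{1}{\log|t|^{-1}})$ on compact subsets, exactly as in \cite[Prop 2.1]{BJ}. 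For a component of $Y$ that is \emph{not} common, i.e.\ a component $E_i$ (or $B_j$) meeting $U$ but not $U_\lambda$, its zero locus is disjoint from $U \cap U_\lambda$, so the corresponding coordinate $|z_i|$ is bounded away from $0$ on $K$; since $\log|f_U|$ has the same order of growth as $\log|t| \to -\infty$, the coordinate $\log|z_i|/\log|f_U|$ is itself $O(\frac{1}{\log|t|^{-1}})$, matching the value $0$ assigned to that coordinate by the attaching map into $\sigma_{Y'}$. The symmetric statement holds for the non-common components of $Y_\lambda$.

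Putting these estimates together, in the face $\sigma_{Y'}$ every coordinate of $\Log_{U_\lambda} - \Log_U$ is $O(\frac{1}{\log|t|^{-1}})$ uniformly on $K$, and summing the finitely many terms $\chi_\lambda(\Log_{U_\lambda} - \Log_U)$ yields the claim. The main obstacle is not the estimate for the common components, which is the computation already carried out in \cite[Prop 2.1]{BJ}, but rather the bookkeeping needed to make sense of the difference $\Log_{U_\lambda} - \Log_U$ when the two maps land in \emph{different} faces: one must verify both the existence of a common face $\sigma_{Y'}$ containing $\sigma_Y$ and all relevant $\sigma_{Y_\lambda}$, and that, under the attaching embeddings, the coordinates corresponding to the non-shared components are negligible of order $\frac{1}{\log|t|^{-1}}$. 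Once this interpretation is fixed, the estimate is uniform because only finitely many opens $U_\lambda$ are involved near any compact set.
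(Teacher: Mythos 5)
Your proof is correct and follows essentially the same route as the paper's: reduce to the pairwise comparisons $\Log_{U_\lambda}-\Log_U$ via the partition of unity, use the compatibility $U_{I,J}\cap U_{I',J'}=U_{I\cup I',J\cup J'}$ of the tubular neighborhoods to produce a common face, estimate the shared components exactly as in \cite[Prop.\ 2.1]{BJ}, and observe that the coordinates of non-shared components are $O(\frac{1}{\log|t|^{-1}})$ since they are bounded away from zero. The only cosmetic difference is that the paper first replaces $U$ by $U\cap U_Y$ so that the common face arises as the stratum of a connected component of $U_Y\cap U_{Y'}$, a normalization your construction of $\sigma_{Y'}$ implicitly needs in order to guarantee $\sigma_Y\subset\sigma_{Y'}$.
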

\begin{proof}

We may replace $U$ by $U \cap U_Y$ and assume that $U \subset U_Y$.
Suppose $x \in (U_1 \cap \dots \cap U_a) \setminus (U_{a+1} \cup \dots \cup U_l)$. Then, from the previous proof, we know that there exists a stratum $Y'$ such that $x \in U_{Y'}$. Since $x \in U_Y \cap U_{Y'}$, which tells us that $Y \cap Y' \neq \emptyset$. Let $Z$ be the stratum corresponding to the connected component of $U_Y \cap U_{Y'}$ containing $x$. Then, $\sigma_{Y}, \sigma_{Y'} \subset \sigma_Z$.

Suppose $x \in E_i$, $z_i = 0$ defines $E_i$ in $U$, and $z_i' = 0$ defines $E_i$ in $U_1$. Then, $\frac{\log|z_i|}{\log|f_U|} - \frac{\log|z_i'|}{\log|f_{U_1}|} = O(\frac{1}{\log|t|^{-1}})$ in a neighborhood of $x$.

Suppose $x \notin E_i$ and $z_i' = 0$ defines $E_i$ in $U_1$. Then, $\frac{\log|z_i'|}{\log|t|} = O(\frac{1}{\log|t|^{-1}})$ in a neighborhood of $x$. 

Using a similar argument for $B_j$'s as well gives us that $\Log_U - \Log_{U_1} = O(\frac{1}{\log|t|^{-1}})$ in a neighborhood of $x$. Repeating the argument for all $U_i$ for $i = 1,\dots,a$, we get that $\Log_U - \Log_V = O(\frac{1}{\log|t|})$ in a neighborhood of $x$.

\end{proof}

\subsection{The hybrid space}
The hybrid space of an snc model $\X$ of $(X,B)$, as a set, is defined as $(\X,B)^\hyb := (X \setminus B) \cup \Delta(\X,B)$. The topology on the hybrid space is defined by 
\begin{itemize}
\item $X\setminus B \hookrightarrow (\X,B)^\hyb$ is an open immersion.
\item The projection map $\pi : (\X,B)^\hyb \to \D$ given by extending the projection $X \setminus B \to \D^*$ and sending $\Delta(\X,B)$ to the origin is continuous. 

	\item $\Log_{V}^\hyb : (V \setminus (\X_0 + B)) \cup \Delta(\X,B) \to \Delta(\X,B)$ defined by $\Log_V$ on $V \setminus (\X_0 + B)$ and identity on $\Delta(\X,B)$ is continuous. 
        \end{itemize}
        
Note that the hybrid space does not contain $B$. It follows from Proposition \ref{LogDiffersBy} that the topology of the hybrid space does not depend on the global log function we pick. Also note that the fiber of $\pi  : (X,B)^\hyb \to \D$ over $t \in \D^*$ is $X_t \setminus B_t$. 

\begin{eg}[Hybrid space of $\P^1 \times \D$]
	\label{HybridSpaceOfP1TimesD}
	The hybrid space $(\X,B)^\hyb$ for Example \ref{P1DualComplex} is given by $\C^* \times \D$ with the identification $(re^{i\theta_1},0) \sim (re^{i\theta_2},0)$ for all $r \in \R, \theta_i \in [0,2\pi]$. Over any line segment in $\D$ with one end point $0$, $(\X,B)^\hyb$ is given by a solid cylinder. See Figure \ref{Figure2}.  
	\begin{figure}
		\centering
		\begin{tikzpicture}
		\draw[fill] (0,0) circle [radius = 0.05]; 
		\draw [<->] (-3.5,0) -- (0,0) -- (3.5,0);
		\draw[] (-2.5,1) -- (0,1) -- (2.5,1);
		\draw[]  (-2.5,-1) -- (0,-1) -- (2.5,-1);
		\draw (-2.5,1) arc (90:270:0.3 and 1);
		\draw (2.5,1) arc (90:270:0.3 and 1);
		\draw[dotted, very thick] (-2.5,1) arc (90:-90:0.3 and 1);
		\draw[dotted, very thick] (2.5,1) arc (90:-90:0.3 and 1);
		\node [right, below] at (0,-1.5) {$(\P^1 \times \D,\{0\} \times \D + \{\infty\} \times \D )^\hyb$};
		
		\draw[->] (4.5,0) -- (5.25,0); 
		
		\draw[] (6,1.5) -- (6,0) -- (6,-1.5); 
		\draw[fill] (6,0) circle [radius = 0.05];
		\draw[fill] (6,1) circle [radius = 0.05];
		\node [right] at (6,0) {0};
		\node [right] at (6,1) {$t$};
		\node [right, below] at (6.1,-1.5) {$\D$};
		\end{tikzpicture}
		\caption{The hybrid space $(\P^1 \times \D,\{0\} \times \D + \{\infty\} \times \D )^\hyb$ with the projection to $\D$}
		\label{Figure2}
	\end{figure}
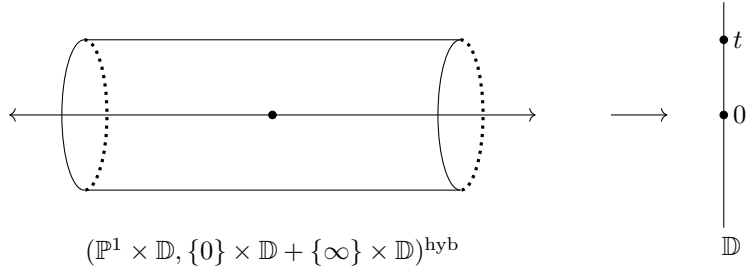
\end{eg}

The hybrid space $\X^\hyb$, constructed in \cite{BJ} in the case $B = 0$ is compact over a closed neighborhood of the origin. But the hybrid space $(\X,B)^\hyb$ that we construct is not always compact over a neighborhood of the origin, as can be seen from Example \ref{HybridSpaceOfP1TimesD}. However, the following proposition tell us that it is not too bad. In particular, it implies that the hybrid space is locally compact. 

\begin{prop}
	The map $\Log_{V^\hyb} : ((V\setminus D) \cup \Delta(\X,B)) \to \Delta(\X,B)$ is proper near the central fiber, in the sense that for a compact set $K \subset \Delta(\X,B)$, $\Log_{V^\hyb}^{-1}(K) \cap \pi^{-1}(\overline{\frac{1}{2}\D})$ is a compact subset of $(\X,B)^\hyb$. 
\end{prop}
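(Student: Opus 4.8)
The plan is to prove compactness directly via nets: I will show that every net in
$S := \Log_{V^\hyb}^{-1}(K) \cap \pi^{-1}(\overline{\tfrac12\D})$ has a subnet converging to a point \emph{of $S$}, which suffices since $(\X,B)^\hyb$ is Hausdorff. The first step is a harmless reduction. Since $\X_0 \subseteq V$ with $V$ open and $\pi : \X \to \D$ proper, the image $\pi(\X \setminus V)$ is closed and avoids $0$, so there is $\delta > 0$ with $\pi^{-1}(\overline{\delta\D}) \subseteq V$. As the statement concerns only the behaviour near the central fiber, I take the disk small enough that $\pi^{-1}(\overline{\tfrac12\D}) \subseteq V$; then every point of $S$ lying over $\D^*$ already lies in $V \setminus D$, so no escape to $\partial V$ can occur and $\Log_V$ is defined at all relevant limit points.

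Given a net $(x_\alpha)$ in $S$, compactness of $\overline{\tfrac12\D}$ lets me pass to a subnet with $t_\alpha := \pi(x_\alpha) \to t_* \in \overline{\tfrac12\D}$. Because $K$ is compact it meets only finitely many faces of $\Delta(\X,B)$ and is bounded in each, so I may fix a constant $M$ bounding \emph{all} coordinates of points of $K$. Comparing the global $\Log_V$ with a local $\Log_U$ on a chart adapted to a stratum $Y$ via Proposition \ref{LogDiffersBy}, the difference is $O(1/\log|t|^{-1})$, hence uniformly bounded over $\overline{\tfrac12\D}$; consequently the membership $\Log_V(x_\alpha) \in K$ bounds each local coordinate $\frac{\log|w_j|}{\log|f_U|}$ by roughly $M$. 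This single estimate drives both cases below.

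Suppose first $t_* \neq 0$. For large $\alpha$ the points $x_\alpha$ lie over the annulus $|t| \in [\tfrac12|t_*|, \tfrac12]$, whose preimage in $\X$ is compact by properness, so a subnet satisfies $x_\alpha \to x_* \in \X$ with $\pi(x_*) = t_* \neq 0$, whence $x_* \in X$. Since $t_* \neq 0$ keeps $\log|f_U|$ bounded, the bound on $\frac{\log|w_j|}{\log|f_U|}$ keeps each $|w_j|$ bounded away from $0$, so $x_* \notin B$; thus $x_* \in (X \setminus B) \cap V = V \setminus D$, an open subset of $(\X,B)^\hyb$ carrying its ordinary topology, and $x_\alpha \to x_*$ there. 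Continuity of $\Log_V$ and closedness of $K$ give $\Log_V(x_*) \in K$, so $x_* \in S$. Suppose instead $t_* = 0$. Here I use compactness of $K$ itself: pass to a subnet with $\Log_{V^\hyb}(x_\alpha) \to \xi_* \in K \subseteq \Delta(\X,B)$. Then $\pi(x_\alpha) \to 0 = \pi(\xi_*)$ and $\Log_{V^\hyb}(x_\alpha) \to \xi_* = \Log_{V^\hyb}(\xi_*)$, so by the definition of the hybrid topology (the coarsest one making $\pi$ and $\Log_{V^\hyb}$ continuous with $X \setminus B$ open) the net converges to $\xi_* \in S$.

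The crux, and the only place the hypothesis ``$K$ compact'' does real work, is the interaction with the removed divisor $B$: a net in $S$ may genuinely approach $B \cap \X_0$ inside the model $\X$, yet must still converge inside $(\X,B)^\hyb$. The resolution is the dichotomy above --- over $t_* \neq 0$ the bound on the $B$-coordinates forbids reaching $B$, while over $t_* = 0$ such a net is harmlessly \emph{captured} at a finite point $\xi_*$ of an unbounded ($B$-)face of $\Delta(\X,B)$, precisely because compactness of $K$ bounds those coordinates. I expect the main technical points to be making the comparison between $\Log_V$ and the local models uniform over $\overline{\tfrac12\D}$ through Proposition \ref{LogDiffersBy}, and verifying that the convergence in the $t_*=0$ case is genuinely in the hybrid topology rather than merely in $\X$.
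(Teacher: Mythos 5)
Your proof is correct and, underneath the net language, has the same skeleton as the paper's: the paper also reduces to $V$ containing the relevant disk-preimage (by rescaling $t$ so that $V=\X$, the analogue of your shrinking of the radius), and also splits into the same dichotomy --- over a small punctured disk it uses that basic hybrid neighborhoods of points of $\Delta(\X,B)$ have the form $\Log_{V^\hyb}^{-1}(W)\cap\pi^{-1}(\frac1N\D)$ together with compactness of $K$ (your $t_*=0$ case, run with open covers and a finite subcover instead of subnets), while over the annulus $\{\tfrac r2\le|t|\le\tfrac12\}$ it invokes properness of $\X\to\D$ and closedness of $L$ there (your $t_*\neq 0$ case). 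What your version buys is that it makes explicit the one point the paper's proof leaves implicit: that over the annulus the constraint $\Log_V(x)\in K$ forces the points to stay away from $B$, so that limits land in $V\setminus D$ where $\Log_V$ is defined --- this is exactly what is hidden in the paper's unproved assertion that $L\cap\pi^{-1}\{\tfrac r2\le|t|\le\tfrac12\}$ is \emph{closed} in the compact set $\pi^{-1}\{\tfrac r2\le|t|\le\tfrac12\}$. One caveat on how you justify that step: Proposition \ref{LogDiffersBy} is an asymptotic statement ($O(1/\log|t|^{-1})$ locally uniformly \emph{as $t\to0$}), and at fixed $t\neq0$ the difference $\Log_V-\Log_U$ is \emph{not} uniformly bounded as one approaches $B$ (the discrepancy in a $w$-coordinate carries a factor $\log|w_j|$, which blows up), so quoting it to transfer the bound $M$ from $\Log_V$ to $\Log_U$ over the whole annulus is not literally licensed. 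The conclusion you need survives by a direct argument: each local coordinate $\frac{\log|w_j^{(\lambda)}|}{\log|f_{U_\lambda}|}$ is nonnegative, the various local ratios for a fixed $B_j$ agree up to bounded multiplicative and additive errors on the annulus, and the partition functions sum to $1$, so the $w_j$-coordinate of $\Log_V(x)=\sum_\lambda\chi_\lambda\Log_{U_\lambda}(x)$ tends to $+\infty$ as $x\to B_j$ with $|t|$ bounded below, contradicting $\Log_V(x)\in K$; with that repair (a fix the paper itself would also need to spell out its closedness claim) your argument is complete.
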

\begin{proof}

By rescaling the coordinate $t$, we may without loss of generality assume that $V = \X$.  We need to show that $L = \Log_{V^\hyb}^{-1}(K) \cap \pi^{-1}(\overline{\frac{1}{2}\D})$ is compact. Let $\bigcup_{i \in I}U_i$ be an open cover of $L$. Since $K \subset L$ is compact, there exists a finite subset $I' \subset I$ such that $K \subset \bigcup_{i \in I'} U_i$. For a point $P \in \Delta(\X,B) \subset (\X,B)^\hyb$, the sets of the form $\Log^{-1}_{V^\hyb}(W) \cap \pi^{-1}(\frac{1}{N}\D)$, where $W \subset \Delta(\X,B)$ is an open neighborhood of $P$ in $\Delta(\X,B)$ and $N \in \mathbb{N}$, form basic open neighborhoods of $P$ in $(\X,B)^\hyb$. Since $K$ is compact, there exists $0 < r \ll 1$ such that $\Log^{-1}_{V^\hyb}(K) \cap \pi^{-1}(r\D) \subset \bigcup_{i\in I'}U_i$.
    
    Since $L \cap \pi^{-1}\{ \frac{r}{2}\leq |t| \leq \frac{1}{2} \}$ is a closed subset of the compact set $\pi^{-1}\{ \frac{r}{2}\leq t \leq \frac{1}{2} \}$, it is compact. Thus, we can find a finite subset $J' \subset I$ such that $L \cap \pi^{-1}\{ \frac{r}{2}\leq |t| \leq \frac{1}{2} \} \subset \bigcup_{i \in J'}U_i$. Thus, $L \subset \cup_{i \in I' \cup J'}U_i$.
\end{proof}

\section{Convergence of measure}
\label{SectionConvergenceOfMeasure}
In this section, we prove Theorem \ref{ThmA} by imitating the proof of \cite[Theorem A]{BJ}.  The proof idea is the same, except for some new calculations. Since $(\X,B)^\hyb$ is not compact, we can no longer use Stone-Weierstrass as done in \cite{BJ}. Instead, we use Lemma \ref{CompactSetFiniteVolume}. Let $(X,B)$ be as in the previous section. Further assume that $K_X + B \sim_{\Q} 0$ and $(X,B)$ is sub-log canonical i.e. $\beta_j \leq 1$ for all $j$. Fix a proper snc model $\X$ of the pair $(X,B)$. Note that  we still don't need to assume that $X$ is projective in this section as well.

\subsection{The subcomplex $\Delta(\L)$ of $\Delta(\X,B)$}
Suppose $\L$ is a $\Q$-line bundle on $\X$ that extends $K_{X/\D^*} + B$. Then, $\L$ differs from $K^\log_{\X/\D} + B$ only by vertical divisors, where $K_{\X/\D}^\log = K_\X - \X_0 + (\X_0)_\red$. Thus, we can write  $\L = K^\log_{\X/\D} - \sum_i a_i E_i + \sum_j \beta_j B_j$ for some $a_i \in \Q$ (See \cite[Section 3.1]{BJ}).  
 Let $\kappa_i = \frac{a_i}{b_i}$ and $\kappa_{\min} = \min_i \kappa_i$.

Since $X$ is smooth, the condition that $(X,B)$ is sub-log-canonical is equivalent to saying that $\beta_j \leq 1$ for all $j$. Define the subcomplex $\Delta(\L) \subset \Delta(\X,B)$ as follows. If $Y \subset E_I \cap B_J$, then $\sigma_Y \in \Delta(\L)$ if $\kappa_i = \kappa_\min$ for all $i \in I$ and if $\beta_j = 1$ for all $j \in J$. In the case when $\dim(X_t) = 1$, this just means that we pick the subgraph generated by vertices corresponding to irreducible components with minimal $\kappa$-value and the rays corresponding to intersections $E_i \cap B_j$ with $\kappa_i = \min_k \kappa_k$ and $\beta_j = 1$. 

Define $b_{\sigma_Y} = \gcd(b_i)_{i \in I}$ and let $\lambda_{\sigma_Y}$ be the normalized Lebesgue measure on $\sigma_Y$. Define $d := \dim(\Delta(\L))$

\subsection{The residual measure}
Given a section $\psi \in H^0(\X,m\L)$ and a closed subvariety $Y \subset \X_0$, we can get a section $\Res_{Y}(\psi) \in H^0(Y,m(\L - \sum_{j|Y \subset B_j} B_j - \sum_{i|Y \subset E_i}E_i)|_{Y})$. If $z_0,\dots,z_p,w_1,\dots,w_q = 0$ define $Y$ locally, and \linebreak
$\psi = f\left( \frac{dz_0}{z_0} \wedge \dots \wedge \frac{dz_p}{z_p} \wedge \frac{dw_1}{w_1} \wedge \dots \frac{dw_q}{w_q} \wedge \phi \right)^{\otimes m}$. Then, $\Res_{Y}(\psi) = f \cdot \phi|^{\otimes m}_{Y}$. Then, $|\Res_{Y}(\psi)|^{2/m}$ gives rise to a positive measure on $Y \setminus (\cup_{i|Y \not{\subset} B_i}B_i|_{Y})$. We denote $\int_{Y \setminus (\cup_{i|Y \not{\subset} B_i}B_i|_{Y})} |\Res_{Y}(\psi)|^{2/m} $ by just $\int_{Y} |\Res_{Y}(\psi)|^{2/m}$. 

\subsection{The Convergence Theorem}
\label{TheConvergenceTheoremSection}

Let $n + 1$ denote the dimension of $X$ i.e. each of the fibers $X_t$ for $t \neq 0$ has dimension $n$.  Let $\eta \in H^0(X, m(K_{X/\D^*} + B))$ be a generator and suppose there exists a section $\psi \in H^0(\X,m\L)$ that extends $\eta$. Let $\psi_t$ denote the restriction $\psi|_{X_t}$ for $t \neq 0$. If $\psi_t = \alpha \cdot (dx_1\wedge \dots \wedge dx_N)^{\otimes m}$ on a local chart, then $i^{n^2}(\psi_t \wedge \overline{\psi_t})^{1/m}$ given locally by $$i^{n)^2}(\psi_t \wedge \overline{\psi_t})^{1/m} = |\alpha|^{2/m} dx_1\wedge d\overline{x}_1  \wedge \dots dx_N \wedge d\overline{x}_N$$ is a well-defined positive smooth volume form on $X_t \setminus B_t$.    

Define a measure $$\mu_t = \frac{i^{n^2}}{|t|^{2\kappa_{\min}}(2\pi \log|t|^{-1})^d}(\psi_t \wedge \overline{\psi_t})^{1/m}$$

on $X_t \setminus B_t$, and a measure
$$\mu_0 := \sum_{\sigma \subset_{\text{face}} \Delta(\L), \dim(\sigma) = d} \left(\int_{Y_\sigma} |\Res_{Y_\sigma}(\psi)|^{2/m} \right) b_{\sigma}^{-1} \lambda_\sigma$$ on $\Delta(\X,B)$.

\begin{eg}
  \label{SubLogCanonicalNecessary}
  This example illustrates the importance of the sub-log-canonical assumption. For simplicity, assume that $X$ has relative dimension 1. Let $E_0$ be an irreducible component of $\X_0$ and let $B_0$ be an irreducible component of $B$ occurring with multiplicity $\beta_0 > 1$. Let $\sigma \simeq \R_{\geq 0}$ be the face corresponding to $E_0 \cap B_0$. Let $z$ and $w$ denote the functions that define $E_0$ and $B_0$ in an open neighborhood $U$ of $E_0 \cap B_0$ such that $|z|,|w| < 1 $ on $U$. We may assume that $t = z^{b_0}$

  We have $\Log_U : (U \setminus (E_0 + B_0)) \to \R_{\geq 0}$ given by $(z,w) \mapsto \frac{\log|w|}{\log|t|}$. Suppose we had that $(\Log_U)_*(\alpha(t) \mu_t)$ weakly converged to a measure $\mu_0$ on $\R_{\geq 0}$ for some positive scaling function $\alpha(t)$. By scaling by a suitable power of $|t|$, we may assume that $\mu_t = i |w|^{-2\beta_0} dw \wedge d\overline{w}$. Pick a compactly supported continuous function $f$ on $\R_{\geq 0}$.
  Then,
  $$\int_{U_t} (f \circ \Log_U)d\mu_t = \int_{U_t} f\left(\frac{\log|w|}{\log|t|}\right) i |w|^{-2\beta_0}  dw \wedge d\overline{w}  .$$ Making a change of variable $w = |t|^{u}e^{i\theta}$, we get
$$ \int_{U_t} (f \circ \Log_U)d\mu_t = \frac{2\pi}{(\log|t|^{-1})} \int_{0}^\infty f(u) |t|^{-2(\beta_0 - 1)u}du$$ 

If we pick a function $f$ that is close to the indicator function of $[0,N]$, then $\alpha(t) \int_{U_t} (f \circ \Log_U)d\mu_t = O(\frac{\alpha(t)}{\log|t|^{-1}} \frac{|t|^{-2(\beta_0 - 1)N}}{\log|t|^{-1}})$ as $t \to 0$. If we require that this expression converge for all values of $N$ as $t \to 0$, then it is easy to see that this is only possible if $\mu_0$ is the zero measure and $\frac{1}{\alpha(t)}$ is growing super-polynomially as $t \to 0$. Thus, we see that the convergence in this hybrid space setting is not very interesting if don't assume that $(X,B)$ is sub-log-canonical. 
\end{eg}

To prove Theorem \ref{ConvergenceTheorem}, we first prove a local version for functions that are pulled-back from a face $\sigma_{Y}$ via a local $\Log$ map. 
\begin{lem}
\label{MainLemmaConvergenceThorem}
Let $U$ be a coordinate chart adapted to a stratum $Y$ of $\X_0$. Let $f$ be a compactly-supported continuous real-valued function on $\sigma_U$. Let $\chi \in C_c(U)$ and if a maximal face of $\Delta(\L)$ is contained in $\sigma_Y$, let $\sigma_{Y'}$ denote this (unique) maximal face and let $Y'$ be the stratum associated to $\sigma_{Y'}$.

 If a maximal face of $\Delta(\L)$ is contained in $\sigma$, then $$\int_{U \cap X_t} (f\circ \Log_U) \chi d\mu_t \to \left( \int_{Y'}\chi|\Res_{Y'}(\psi)|^{2/m}\right)\int_{\sigma_{Y'}} f b_{\sigma_{Y'}}^{-1} \lambda_{\sigma_{Y'}}$$ as $t \to 0$. 
 If $\sigma_Y$ does not contain a maximal face of $\Delta(\L)$, then the above limit is $0$.
\end{lem}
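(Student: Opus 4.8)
The plan is to reduce everything to an explicit integral in the single chart $U$ and extract its leading asymptotics as $t\to0$. First I would fix adapted coordinates $(z,w,y)=(z_0,\dots,z_p,w_1,\dots,w_q,y_1,\dots,y_r)$ on $U$ as in the definition of $\Log_U$, and use the relation $t=u\,z_0^{b_0}\cdots z_p^{b_p}$ (with $u$ a unit) to eliminate $z_0$, so that $X_t\cap U$ is parametrized by $(z_1,\dots,z_p,w_1,\dots,w_q,y)$. Writing $\psi$ in these coordinates via the residue presentation and restricting to $X_t$, one obtains an explicit formula for the density of $i^{n^2}(\psi_t\wedge\overline{\psi_t})^{1/m}$: it is a product of monomial factors $\prod_i|z_i|^{\bullet}\prod_j|w_j|^{-2\beta_j}$, where the exponents on the $|z_i|$ are governed by the $a_i$ (equivalently the $\kappa_i$) after the elimination of $z_0$ produces the expected power of $|t|$, times the transverse density that restricts to $|\Res_{Y'}(\psi)|^{2/m}$ along $Y'$, integrated against the cylindrical reference form $\prod_i 2\,d\log|z_i|\,d\theta_i\wedge\prod_j 2\,d\log|w_j|\,d\phi_j\wedge(\mathrm{vol}_y)$.

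Next I would change to the logarithmic--polar coordinates adapted to $\Log_U$, setting $x_i=\log|z_i|/\log|f_U|$ and $s_j=\log|w_j|/\log|f_U|$ together with the angles $\theta_i,\phi_j$; the defining relation becomes the face constraint $\sum_i b_ix_i=1$, and $f\circ\Log_U$ becomes $f(x,s)$. Collecting the $|t|$-powers, the density carries a factor $|t|^{-2\sum_i a_i x_i}=|t|^{-2\sum_i \kappa_i b_i x_i}$. Since $\sum_i\kappa_ib_ix_i\ge\kappa_{\min}\sum_ib_ix_i=\kappa_{\min}$ with equality exactly when $x_i=0$ for every $i$ with $\kappa_i>\kappa_{\min}$, dividing by $|t|^{2\kappa_{\min}}$ leaves $|t|^{2\sum_ib_i(\kappa_i-\kappa_{\min})x_i}$, which decays to $0$ off the subface where only the minimal-$\kappa$ components survive. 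In parallel, the integral $\int|w_j|^{-2\beta_j}\,i\,dw_j\wedge d\overline{w_j}$ converges when $\beta_j<1$ (so that coordinate is retained as an honest coordinate of $Y'$ and its contribution is absorbed into $\int_{Y'}$) and is logarithmically divergent exactly when $\beta_j=1$ (producing one factor of $\log|t|^{-1}$ and a Lebesgue coordinate along the unbounded ray direction of $\sigma_{Y'}$). Thus both the $\kappa_i>\kappa_{\min}$ and the $\beta_j<1$ directions push all the mass onto the unique maximal face $\sigma_{Y'}\subset\Delta(\L)$ contained in $\sigma_Y$, with $Y'=E_{I'}\cap B_{J'}$ for $I'=\{i:\kappa_i=\kappa_{\min}\}$ and $J'=\{j:\beta_j=1\}$.

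I would then carry out the dimension bookkeeping. The $d=|I'|+|J'|-1$ surviving radial directions each contribute a Jacobian factor $\log|t|^{-1}$ (from $d\log|z_i|=\log|f_U|\,dx_i$ and likewise for $s_j$), and the paired angular variables each contribute $\int_0^{2\pi}d\theta=2\pi$ --- exactly the $(2\pi\log|t|^{-1})^d$ normalization --- while the remaining coordinates (the $y$, the non-minimal $z_i$, and the $\beta_j<1$ variables $w_j$) assemble into the residue integral $\int_{Y'}\chi\,|\Res_{Y'}(\psi)|^{2/m}$. The constant $b_{\sigma_{Y'}}^{-1}$ and the passage to the normalized Lebesgue measure $\lambda_{\sigma_{Y'}}$ come from the change of variables on the simplex $\{\sum_{i\in I'}b_ix_i=1\}$ via Proposition \ref{NormalizedLebesgueMeasureAnalysis}, combined with the $b_0$-factor produced when eliminating $z_0$. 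When no $d$-dimensional face of $\Delta(\L)$ lies in $\sigma_Y$ --- either because no $E_i$ with $\kappa_i=\kappa_{\min}$ meets $Y$, or because the best available face has dimension $<d$ --- the surviving integral carries strictly fewer than $d$ logarithmic factors, so after dividing by $(2\pi\log|t|^{-1})^d$ it tends to $0$.

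The main obstacle is the passage to the limit itself. Because $\mu_t$ is an infinite measure, I cannot integrate over all of $X_t$ and apply dominated convergence globally, nor (as in \cite{BJ}) invoke Stone--Weierstrass on a compact space; instead I localize using the compact supports of $f$ and $\chi$ and must produce a uniform, $t$-independent integrable majorant on the relevant tube around $Y'$, which is precisely the role of Lemma \ref{CompactSetFiniteVolume}. The genuinely new delicacy compared with the sub-klt case is the unbounded $\beta_j=1$ ray directions: there the density is only logarithmically integrable, so one must isolate the $\log|t|^{-1}$-divergent part carefully and separate it cleanly from the convergent $\beta_j<1$ contributions and from the error coming from the unit $u$ and the coordinate change (controlled via the fact that competing $\Log$ functions differ by $O(1/\log|t|^{-1})$), so that only the stated leading term survives in the limit.
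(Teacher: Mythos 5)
Your route is essentially the paper's: eliminate $z_0$ via $t = u\,z_0^{b_0}\cdots z_p^{b_p}$, pass to logarithmic--polar coordinates on the chart, observe that after dividing by $|t|^{2\kappa_{\min}}$ the monomial factor $|t|^{2\sum_i b_i(\kappa_i-\kappa_{\min})x_i}$ kills everything off the minimal-$\kappa$ subface, that each $\beta_j<1$ direction loses a factor of $\log|t|^{-1}$ while each $\beta_j=1$ direction supplies one (truncated at $v_j\leq N$ by the compact support of $f$ --- this is exactly the paper's single new estimate relative to the $B=0$ case of \cite{BJ}), match constants via Proposition \ref{NormalizedLebesgueMeasureAnalysis} together with the $b_0$ solutions of $z_0^{b_0}=t/\prod_{i\geq 1}z_i^{b_i}$, and finish with dominated convergence. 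Your dimension bookkeeping ($d=|I'|+|J'|-1$ radial directions, each pairing one $\log|t|^{-1}$ with one angular $2\pi$) and your identification of the vanishing cases agree with the paper's order-of-magnitude analysis.

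The one genuine flaw is structural: you invoke Lemma \ref{CompactSetFiniteVolume} to supply the $t$-independent integrable majorant, but in the paper that lemma is proved \emph{from} Lemma \ref{MainLemmaConvergenceThorem} --- its proof dominates $\mathbf{1}_L\circ\Log_V$ by a function of the form $f\circ\Log_{U_i}$ and then cites the present lemma for existence and finiteness of the limit --- so as written your dependency is circular. Fortunately the appeal is also unnecessary: after your change of variables the integrand over $\tilde{\sigma}\times(S^1)^{p'}\times(S^1)^{q'}\times Y'$ is bounded by $\|f\|_\infty\|\chi\|_\infty$ times the reference measure $du\,dv\,d\theta\,d\vartheta\,|\Res_{Y'}(\psi)|^{2/m}$ restricted to $\supp f\times\supp\chi$, which is finite precisely because $a_i>0$ whenever $\kappa_i>\kappa_{\min}$ and $1-\beta_j>0$ whenever $\beta_j<1$ (and $\mathbf{1}_S\leq 1$); dominated convergence then applies chart-locally with no global input, which is how the paper proceeds. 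One further small slip: the density carries the factor $|t|^{+2\sum_i a_i x_i}$, not $|t|^{-2\sum_i a_i x_i}$, since $x_i=\log|z_i|/\log|f_U|\geq 0$ and $|t|<1$; your subsequent displayed exponent $|t|^{2\sum_i b_i(\kappa_i-\kappa_{\min})x_i}$ is the correct one, so nothing downstream is affected.
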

\begin{proof}
By replacing $\L$ by $\L - \kappa_\min\X_0$ and $\psi$ by $t^{\kappa_\min}\psi$, we may assume that $\kappa_\min = 0$. Suppose $Y = E_0 \cap \dots \cap E_p \cap B_1 \cap \dots \cap B_q $ locally. The proof for the case $q = 0$ can be found in \cite[Lemma 3.5]{BJ}, and the calculations in this proof are not very different. The only new estimate we need to make is Equation \eqref{eqnNew}. 
Let $(z,w,y)$ be coordinates on $U$ such that $E_i = \{z_i = 0\}$ and $B_j = \{w_j = 0\}$ on $U$. To simplify notation, denote $\underline{z}^{\underline{a}} := z_0^{a_0}\dots z_p^{a_p}$ and $\underline{w}^{\underline{\beta}}:= w_1^{\beta_1}\dots w_{q}^{\beta_q}$. 
Then, we can write $\psi^{1/m}$ locally in $U$ as 
 $$\psi^{1/m} = \underline{z}^{\underline{a}}\cdot \underline{w}^{-\underline{\beta}} \cdot \frac{dz_0}{z_0} \wedge \dots \wedge \frac{dz_p}{z_p} \wedge dw_1 \wedge \dots \wedge dw_q \wedge d\underline{y}$$

We know that $\psi_t^{1/m} \wedge \frac{dt}{t} = \psi^{1/m}$. Using $\frac{dt}{t} = \sum_{i=0}^p b_i\frac{dz_i}{z_i}$ we can write $\psi_t^{1/m}$ on $U \cap X_t$ as  

\begin{equation}
\label{SimplifiedPsiT}
	\psi_t^{1/m} = \frac{ \underline{z}^{\underline{a}}\cdot \underline{w}^{-\underline{\beta}+1}}{b_0}  \frac{dz_1}{z_1} \wedge \dots \wedge \frac{dz_p}{z_p} \wedge \frac{dw_1}{w_1} \dots \wedge \frac{dw_q}{w_q} \wedge d\underline{y} \Big|_{X_t}.
\end{equation}

Denote by $\Log_t : ((X_t \cap U) \setminus D) \to \sigma_Y \times Y$ the map given by $\Log_t(z,w,y) = (\Log_U(z,w),y)$. 

Similar to the analysis done in \cite[Section 1.4]{BJ}, we can switch to log-polar coordinates. Let $u_i = b_i \frac{\log|z_i|}{\log|t|}$ and $v_j = \frac{\log|w_j|}{\log|t|}$,
$\langle\underline{\kappa},\underline{u}\rangle := \sum_{i=0}^p \kappa_i u_i$, $\langle\underline{v},-\underline{\beta}+1 \rangle := \sum_{j=1}^q v_j(-\beta_j+1) $. Then,
we can write
\begin{multline*}
  \int_{X_t \cap U} f d\mu_t
  = \\
  C(\log|t|^{-1})^{p+q-d}\int_{\sigma_p \times \R^q_{\geq 0} \times Y} |t|^{2(\langle\underline{\kappa},\underline{u}\rangle+\langle\underline{v},-\underline{\beta}+1 \rangle)} \left( \int_{\Log_{t}^{-1}(u,v,y)}\phi \rho_{t,u,v,y} \right) du dv |dy|^2,
\end{multline*}
where $\phi = f \circ \Log_U$, $\rho_{t,u,v,y}$ is the Haar measure on the torsor $\Log_t^{-1}(u,v,y)$ for the (possibly disconnected) Lie-group $\{(\theta_0,\dots,\theta_p) \in (S^1)^{p+1} | e^{i\theta_0}\dots e^{i\theta_p} = 1  \} \times (S^1)^q$ and $C$ is a constant. 

First, let us try to figure out the order of magnitude of the expression on the left hand side. After re-indexing, assume that $\kappa_0 = \min_{i=1}^p \kappa_i$. Note that 
$$\int_{\sigma} |t|^{2\langle \kappa,u \rangle} du = O\left(\frac{|t|^{2\kappa_0}}{(\log|t|^{-1})^{\#\{i | \kappa_i > \kappa_0\}}}\right),$$ and for a fixed $N$ such that $\supp(f) \subset \{\sum_{i=0}^p b_iu_i = 1 \} \times [0,N]^q$, 
\begin{equation}
\int_{[0,N]^q} |t|^{\sum_{j=1}^q(-2\beta_j + 2)v_j}dv = O\left(\frac{1}{(\log|t|^{-1})^{\#\{j | \beta_j < 1\}}}\right). \label{eqnNew}
\end{equation}

Thus, we see that 
$$ \int_{U \cap X_t} (f\circ \Log_U) \chi d\mu_t = O\left(\frac{|t|^{2\kappa_0}}{(\log|t|^{-1})^{d - p - q + \#\{i | \kappa_i > \kappa_0\} + \#\{j| \beta_j < 1 \}}}\right) .$$
Note that the right hand side in the above expression goes off to $0$, unless $\kappa_0 = 0$ and $d = \#\{i | \kappa_i = 0\} + \#\{j | \beta_j = 1\}$. This corresponds exactly to the case when there exists a face $\sigma_{Y'} \subset \sigma_Y$ such that $\sigma_{Y'} \subset \Delta(\L)$ and $\sigma_{Y'}$ has dimension $d$. 

After a possible re-indexing, assume that $\kappa_0 = \dots = \kappa_{p'} = 0$ and $\kappa_i > 0$ for all $i > p'$, and $\beta_1 = \dots = \beta_q = 1$ and $\beta_j < 1$ for all $j > q'$, and $p' + q' = d$. Then, $Y' = E_1 \cap \dots \cap E_{p'} \cap B_1 \cap \dots \cap B_{q'} $.

In this case, the Poincar{\'e} residue of $\psi$ at $Y'$ is given by, 
\begin{multline*}
  \Res_{Y'}(\psi)^{1/m} = \\
  z_{p'+1}^{a_{p'+1}}\dots z_p^{a_p} w_{q'+1}^{1-\beta_{q'+1}}\dots w_q^{1-\beta_q} \frac{dz_{p'+1}}{z_{p'+1}} \wedge \dots \wedge \frac{dz_p}{z_p} \wedge \frac{dw_{q'+1}}{w_{q'+1}} \wedge \dots \wedge \frac{dw_q}{w_q} \wedge d\underline{y}  
\end{multline*}

Note that $|\Res_{Y'}(\psi)|^{2/m}$ is a finite measure on $Y'$ as $a_{i}, (1-\beta_j) > 0$ for all $i > p'$ and $j > q'$. Using the expression of $\psi_t$ in Equation \eqref{SimplifiedPsiT}, we can write 
$$
i^{n^2}\psi_t^{1/m} \wedge \overline{\psi_t}^{1/m} = \left| \frac{1}{b_0} \frac{dz_1}{z_1} \wedge \dots \wedge \frac{dz_{p'}}{z_{p'}} \wedge \frac{dw_1}{w_1} \wedge \dots \wedge \frac{dw_{q'}}{w_{q'}} \wedge \Res_{Y'}(\psi)^{1/m} \right|^2.
$$

Make a change of variables $z_i = |t|^{u_i}e^{i\theta_i}$ for $1 \leq i \leq p'$ and $w_j = |t|^{2v_j}e^{i \vartheta_j}$ for $1 \leq j \leq q'$. Writing $z' = (z_{p'+1},\dots,z_p)$ and $w' = (w_{q'+1},\dots,w_q)$, we can view $(z',w')$ as coordinates on $Y' \cap U$. Let $\tilde{\sigma} = \{(u,v) \in \R^{p+q} | \sum_{i=1}^p b_i u_i \leq 1 \}$.  Write $$S := \left\{(u,v,z',w') \in \tilde{\sigma} \times (Y' \cap U) \Bigg| \sum_{i=1}^{p'} b_i u_i + \sum_{i=p'+1}^{p} \frac{b_i \log|z_i|}{\log|t|} \leq 1  \right\}$$
and let $\textbf{1}_S$ denote its indicator function. Applying the change of variables, we get 
\begin{align*}
\frac{1}{(2\pi\log |t|)^d}\int_{U \cap X_t} (f\circ \Log_U) \chi  \left| \frac{1}{b_0} \frac{dz_1}{z_1} \wedge \ \dots \wedge \frac{dz_{p'}}{z_{p'}} \wedge \frac{dw_1}{w_1} \wedge \dots \wedge \frac{dw_{q'}}{w_{q'}} \wedge \Res_{Y'}(\psi)^{1/m} \right|^2 \\
= 
\frac{1}{b_0^2(2\pi)^d}\int_{ \tilde{\sigma} \times (S^1)^{p'} \times (S^1)^{q'} \times Y'} \sum_{z_0 | z_0^{b_0} = t/\Pi_{i=1}^{p'} z_{i}^{b_i}} f \cdot \chi \cdot \textbf{1}_S du  \ dv  \ d\theta \ d\vartheta \ |\Res_{Y'}(\psi)|^{2/m}.
\end{align*}

The integral on the right hand side is taken over $\tilde{\sigma} \times (S^1)^{p'} \times (S^1)^{q'} \times Y'$, where we view $(u,v) \in \tilde{\sigma}$, $\theta_i \in S^1$ for $1 \leq i \leq p'$,  $\vartheta_j \in S^1$ for $1 \leq j \leq q'$ and $(z',w') \in Y$. 

Let us analyze the pointwise limit of each of the factors appearing in the right hand side of the previous expression. We have that $$f\left(1 - \sum_{i=1}^{p'} b_i u_i - \sum_{i=p'+1}^{p} \frac{b_i\log|z_i|}{\log|t|}, u , \frac{\log|z'|}{\log|t|}, v, \frac{\log|w'|}{\log|t|}\right) \to 
f\left(1 - \sum_{i=1}^{p'} b_i u_i , u , \underline{0}, v, \underline{0}\right)
$$ pointwise on $\tilde{\sigma} \times Y$ as $t \to 0$.

As for $\chi$, note that $z_0 \to 0$ as $t \to 0$ for a fixed $(u,v,z',w') \in \tilde{\sigma} \times (Y' \cap U)$. So,
$$
\chi(z_0, |t|^{u}e^{i\theta},z', |t|^{v}e^{i\vartheta},w') \to \chi(0,z',0,w')
$$
as $t \to 0$. 

It is easy to check that $\textbf{1}_S \to 1$ a.e on $\tilde{\sigma} \times (Y' \cap U)$, and from our analysis in Proposition \ref{NormalizedLebesgueMeasureAnalysis}, we have that $b_{\sigma_{Y'}}^{-1} \lambda_{\sigma_{Y'}} = \frac{1}{b_0} du dv$ under the homeomorphism $\sigma_{Y'} \xrightarrow{\ \simeq \ } \tilde{\sigma}$ given by $(u_0,\dots,u_{p'},v_1,\dots,v_{q'}) \to (u_1,\dots,u_{p'},v_1,\dots,v_{q'})$.  The remaining factor of $\frac{1}{b_0}$ is taken care of the fact that the number of solutions $z_0$ to the equation $z_0^{b_0} = \frac{t}{\Pi_{i=1}^{p}z_i^{b_i}}$ is exactly $b_0$.

Using Lebesgue's dominated convergence theorem, we have the result. 
\end{proof}

The following lemma helps to `glue' to the result of the previous lemma to obtain a global version.
\begin{lem}
	\label{CompactSetFiniteVolume}
	Let $L$ be a compact subset of $(\X,B)^\hyb$. Then, $\limsup_{t \to 0} \int_{X_t \cap L} d\mu_t < \infty$. 
\end{lem}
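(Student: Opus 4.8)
I want to bound $\int_{X_t \cap L}\,d\mu_t$ uniformly as $t\to 0$ for an arbitrary compact $L\subset(\X,B)^\hyb$. The first move is to reduce to a local statement. Since $L$ is compact and $\Log_{V^\hyb}$ is proper near the central fiber (by the preceding proposition), the "skeletal part" $L\cap\Delta(\X,B)$ is a compact subset $K$ of the dual complex, and $L$ is contained in a set of the form $\Log_{V^\hyb}^{-1}(K')\cap\pi^{-1}(\overline{r\D})$ for a slightly larger compact $K'\subset\Delta(\X,B)$ and some radius $r$. I then cover $K'$ by finitely many faces $\sigma_{Y_1},\dots,\sigma_{Y_k}$, and correspondingly cover the relevant neighborhood of $\X_0$ by finitely many coordinate charts $U_1,\dots,U_k$ adapted to the strata $Y_1,\dots,Y_k$, together with a partition of unity $\{\chi_\lambda\}$ subordinate to this cover. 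By subadditivity it then suffices to prove
$$\limsup_{t\to 0}\int_{U_\lambda\cap X_t}\chi_\lambda\,d\mu_t < \infty$$
for each $\lambda$, where implicitly the integrand is supported in $\Log_{U_\lambda}^{-1}(K'\cap\sigma_{Y_\lambda})$, i.e.\ the relevant face coordinates stay in a compact (hence bounded) region.

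**The local estimate.** On a fixed chart $U$ adapted to $Y=E_0\cap\dots\cap E_p\cap B_1\cap\dots\cap B_q$, I reuse exactly the computation set up in Lemma \ref{MainLemmaConvergenceThorem}. Taking $f$ in that lemma to be (a continuous majorant of) the indicator of the compact region $K'\cap\sigma_Y$, which is a \emph{bounded} subset of $\sigma_Y$, and using the change to log-polar coordinates $u_i=b_i\log|z_i|/\log|t|$, $v_j=\log|w_j|/\log|t|$, I get the same order-of-magnitude bound
$$\int_{U\cap X_t}\chi\,d\mu_t = O\!\left(\frac{|t|^{2\kappa_0}}{(\log|t|^{-1})^{\,d-p-q+\#\{i\,|\,\kappa_i>\kappa_0\}+\#\{j\,|\,\beta_j<1\}}}\right),$$
where $\kappa_0=\min_i\kappa_i$ over the components meeting $U$. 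The key point is that compactness of $L$ forces the $v_j$-integrals to run over a bounded box $[0,N]^q$ rather than all of $\R_{\geq 0}^q$, so Equation \eqref{eqnNew} applies and every exponential factor $|t|^{-2(\beta_j-1)v_j}$ is controlled; this is precisely where sub-log-canonicity ($\beta_j\leq 1$) and the compact support enter. Since $\kappa_0\geq\kappa_{\min}=0$ (after the normalization $\kappa_{\min}=0$) and the exponent of $\log|t|^{-1}$ is bounded below independently of $t$, the right-hand side is bounded as $t\to 0$: either it decays to $0$, or it converges to the finite limit identified in Lemma \ref{MainLemmaConvergenceThorem}.

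**The main obstacle.** The genuinely delicate point is the interaction between the two reductions: I must be sure that covering $K'$ by faces and covering a neighborhood of $\X_0$ by adapted charts can be done \emph{compatibly}, so that the support condition "$\Log_U$-image lies in a bounded part of $\sigma_Y$" is actually inherited by each local piece $\chi_\lambda\,d\mu_t$. Here I would lean on Proposition \ref{LogDiffersBy}: because the various local $\Log_U$ and the global $\Log_V$ agree up to $O(1/\log|t|^{-1})$, a uniform bound on $\Log_{V}$ translates into uniform bounds on each $\Log_{U_\lambda}$ (for $t$ small), keeping the face coordinates in a fixed compact set. The second subtlety is that $L$ also contains points of $X\setminus B$ with $|t|$ bounded away from $0$; but over any annulus $\{r'\leq|t|\leq r\}$ the family $\mu_t$ varies continuously and $L$ meets only a compact piece of each fiber, so $\int_{X_t\cap L}d\mu_t$ is continuous in $t$ there and automatically bounded. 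Thus the only real work is the $t\to 0$ asymptotics, which the local estimate handles, and I expect no further difficulty beyond bookkeeping the finite cover.
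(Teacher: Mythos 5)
Your proposal is correct and takes essentially the same route as the paper's own proof: reduce via the compact exhaustion to $L = \Log_{V^\hyb}^{-1}(K)$ for compact $K \subset \Delta(\X,B)$, localize with a partition of unity subordinate to adapted charts, use Proposition \ref{LogDiffersBy} to dominate $\mathbf{1}_L \circ \Log_V$ by $f \circ \Log_{U_i}$ for a compactly supported continuous majorant $f$, and conclude by the finiteness of the limit in Lemma \ref{MainLemmaConvergenceThorem}. Your additional bookkeeping (the annulus $\{r' \leq |t| \leq r\}$, irrelevant for a $\limsup_{t \to 0}$, and re-deriving the order-of-magnitude estimate with Equation \eqref{eqnNew} rather than just citing the lemma's conclusion) is a harmless elaboration of the same argument.
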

\begin{proof}
  Without loss of generality assume that $V = \X$. Since $\{\Log_{V^\hyb}^{-1}(K) \cap \pi^{-1}(\frac{1}{2} \overline{\D}) \}_{K \subset_{cpt} \Delta(\X,B)}$ forms a compact exhaustion of $\X \cap \pi^{-1}(\frac{1}{2} \overline{\D})$, we may assume that $L = Log_{V^\hyb}^{-1}(K)$ for some compact $K \subset \Delta(\X,B)$.	

	We wish to show that $\limsup_{t \to 0}\int_{X_t} \mathbf{1}_L \circ \Log_V d\mu_t < \infty$. Let $V = \bigcup_{i \in I} U_i$ and $\{\chi_i\}_{i \in I}$ be a partition of unity on $\{U_i\}_{i \in I}$ such that $\Log_V = \sum_i \chi_i \Log_{U_i}$. It is enough to show that $\limsup_{t \to 0}\int_{U_i \cap X_t} \chi_i (\mathbf{1}_L \circ \Log_V) d\mu_t < \infty$ for all $i$. 
	
	Since $\Log_V - \Log_{U_i} = O(\frac{1}{\log|t|^{-1}})$ on the support of $\chi_i$, we can find a compactly supported continuous function $f$ on $\Delta(\X,B)$ such that $f \circ \Log_{U_i} \geq \mathbf{1}_L \circ \Log_V$ on $ (U_i \setminus D) \cap \supp(\chi_i)$. 
	
	Then, \[ \limsup_{t \to 0}\int_{U_i \cap X_t} \chi_i (\mathbf{1}_L \circ \Log_V) d\mu_t \leq \lim_{t \to 0}\int_{U_i \cap X_t} \chi_i (f \circ \Log_{U_i}) d\mu_t ,\] 
	and the right hand side exists and is finite by Lemma \ref{MainLemmaConvergenceThorem}. 
\end{proof}

We now prove the statement of Theorem \ref{ConvergenceTheorem} for functions that are pulled back from compactly-supported continuous functions on $\Delta(\X,B)$ via a global $\Log$ map.  
\begin{lem}
	Let $f$ be a continuous compactly supported function on $\Delta(\X,B)$ and let $V$ be a neighborhood of $\X_0$, and let $\Log_V$ be a global log function. Then, $\int_{X_t} (f \circ \Log_V) d\mu_t \to \int_{\Delta(\X,B)} f d\mu_0$ as $t \to 0$.
\end{lem}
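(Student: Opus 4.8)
The plan is to reduce the global statement to the local computation in Lemma \ref{MainLemmaConvergenceThorem} by means of a partition of unity, using Lemma \ref{CompactSetFiniteVolume} to control the tails that the local lemma does not see. Concretely, write $V = \bigcup_{i=1}^l U_i$ with the adapted charts and partition of unity $\{\chi_i\}$ from the construction of $\Log_V$, so that $\Log_V = \sum_i \chi_i \Log_{U_i}$. The immediate temptation is to split
$$ \int_{X_t}(f \circ \Log_V)\,d\mu_t = \sum_{i=1}^l \int_{U_i \cap X_t} \chi_i\,(f \circ \Log_V)\,d\mu_t $$
and apply the local lemma to each summand. The obstruction is that Lemma \ref{MainLemmaConvergenceThorem} computes the limit of $\int_{U_i \cap X_t} \chi_i\,(f \circ \Log_{U_i})\,d\mu_t$, with the \emph{local} log $\Log_{U_i}$ inside $f$, whereas here $f$ is composed with the \emph{global} $\Log_V$. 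So the first real step is to replace $\Log_V$ by $\Log_{U_i}$ inside each summand at an acceptable cost.

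For that replacement I would invoke Proposition \ref{LogDiffersBy}, which gives $\Log_V - \Log_{U_i} = O\!\left(\frac{1}{\log|t|^{-1}}\right)$ locally uniformly on $\supp(\chi_i)$ as $t \to 0$. Since $f$ is continuous and compactly supported, hence uniformly continuous, we have $f \circ \Log_V - f \circ \Log_{U_i} \to 0$ uniformly on $\supp(\chi_i) \cap X_t$ as $t \to 0$. The point is then to bound
$$ \left| \int_{U_i \cap X_t} \chi_i\,(f \circ \Log_V - f \circ \Log_{U_i})\,d\mu_t \right| \leq \left( \sup_{\supp(\chi_i) \cap X_t} |f \circ \Log_V - f \circ \Log_{U_i}| \right) \int_{U_i \cap X_t} \chi_i\,d\mu_t. $$
The supremum factor tends to $0$; the integral factor is bounded above, uniformly in $t$ for $t$ small, by Lemma \ref{CompactSetFiniteVolume} applied to a compact set $L$ containing $\Log_{V^{\hyb}}^{-1}$ of a compact neighborhood of $\supp(f)$ intersected with $\supp(\chi_i)$. (Here one uses that $\chi_i \cdot (f \circ \Log_V)$ and $\chi_i \cdot (f \circ \Log_{U_i})$ are both supported, for small $t$, inside the $\Log_{V^{\hyb}}$-preimage of a fixed compact set, because $\Log_V$ and $\Log_{U_i}$ are uniformly close.) Thus each cross term vanishes in the limit, and the limit of $\int_{U_i \cap X_t} \chi_i\,(f \circ \Log_V)\,d\mu_t$ equals the limit of $\int_{U_i \cap X_t} \chi_i\,(f \circ \Log_{U_i})\,d\mu_t$, to which Lemma \ref{MainLemmaConvergenceThorem} directly applies.

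Having made this reduction, I would apply Lemma \ref{MainLemmaConvergenceThorem} to each $i$: the summand converges to $\left(\int_{Y_i'} \chi_i\,|\Res_{Y_i'}(\psi)|^{2/m}\right)\int_{\sigma_{Y_i'}} f\,b_{\sigma_{Y_i'}}^{-1}\,\lambda_{\sigma_{Y_i'}}$ when $\sigma_{Y_i}$ contains a maximal (dimension $d$) face $\sigma_{Y_i'}$ of $\Delta(\L)$, and to $0$ otherwise. Summing over $i$ and organizing the contributions by the maximal face $\sigma$ they land on, I would show that the $\chi_i$ summing over a fixed stratum reassemble $\int_{Y_\sigma} |\Res_{Y_\sigma}(\psi)|^{2/m}$ via $\sum_i \chi_i = 1$ on the relevant overlaps, yielding exactly
$$ \sum_{\substack{\sigma \subset_{\text{face}} \Delta(\L) \\ \dim \sigma = d}} \left(\int_{Y_\sigma} |\Res_{Y_\sigma}(\psi)|^{2/m}\right) b_\sigma^{-1}\,\lambda_\sigma, $$
which is the definition of $\int_{\Delta(\X,B)} f\,d\mu_0$. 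The main obstacle, and the step deserving the most care, is the bookkeeping in this last reassembly: different charts $U_i$ adapted to different strata $Y_i$ can all see the same maximal face $\sigma$, and one must check that the residue measures $|\Res_{Y'}(\psi)|^{2/m}$ computed from these distinct charts agree on the relevant stratum $Y_\sigma$ (this is essentially the well-definedness of the Poincaré residue), and that the partition of unity correctly glues the local residue integrals into the single global integral $\int_{Y_\sigma}|\Res_{Y_\sigma}(\psi)|^{2/m}$ without double-counting or omission.
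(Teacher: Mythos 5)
Your proposal is correct and follows essentially the same route as the paper's proof: decompose via the partition of unity $\Log_V = \sum_i \chi_i \Log_{U_i}$, use Proposition \ref{LogDiffersBy} together with Lemma \ref{CompactSetFiniteVolume} to swap $f \circ \Log_V$ for $f \circ \Log_{U_i}$ at vanishing cost (noting, as you do, that the error integral must be restricted to the $\Log$-preimage of a fixed compact set, since $\mu_t$ may have infinite total mass), and then apply Lemma \ref{MainLemmaConvergenceThorem} to each summand and reassemble via $\sum_i \chi_i = 1$ on each stratum $Y_\sigma$. The final bookkeeping step you flag is handled implicitly in the paper by the fact that $\Res_{Y_\sigma}(\psi)$ is globally well defined on $Y_\sigma$ and that each $\sigma_{U_i}$ contains at most one maximal face of $\Delta(\L)$, exactly as you anticipate.
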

\begin{proof}
	Let $V = \bigcup_{i \in I} U_i$ and let $\chi_i$ be a partition of unity on $U_i$ so that $\Log_V = \sum_i \chi_i\Log_{U_i}$.
	
	Then, we can write $\int_{X_t} (f \circ \Log_V) d\mu_t = \sum_i \int_{U_i \cap X_t} \chi_i (f \circ \Log_V) d\mu_t $. 
	It follows from Lemma \ref{CompactSetFiniteVolume} and from Proposition \ref{LogDiffersBy} that 
	\begin{equation}
	\label{eqn43}
	\lim_{t \to 0}\Big|\int_{U_i \cap X_t} \chi_i (f \circ \Log_V) d\mu_t - \int_{U_i \cap X_t} \chi_i (f \circ \Log_{U_i}) d\mu_t \Big| = 0.
	\end{equation}
	
	If $\sigma_{U_i}$ contains a maximal face of $\Delta(\L)$, it follows from Lemma \ref{MainLemmaConvergenceThorem} that 
	$$ \lim_{t \to 0} \int_{U_i \cap X_t} \chi_i (f \circ \Log_{U_i}) d\mu_t =  \left( \int_{Y_{\sigma'}}\chi_i|\Res_{Y_{\sigma'}}(\psi)|^{2/m}\right)\int_{\sigma'} f b_{\sigma'}^{-1} \lambda_{\sigma'} $$
	where $\sigma' \subset \sigma_{U_i}$ is a maximal face of $\Delta(\L)$. If $\sigma_{U_i}$ does not contain a maximal face of $\Delta(\L)$, the above limit is 0. Note that any $\sigma_{U_i}$ contains at most one maximal face $\sigma$ of $\Delta(\L)$ and this happens if and only if $Y_\sigma$ intersects $U_i$. Thus, for all $i \in I$, we have 
	\begin{equation}
	\label{eqn44}
		\lim_{t \to 0} \int_{U_i \cap X_t} \chi_i (f \circ \Log_{U_i}) d\mu_t = 
		 \sum_{\sigma \subset\Delta(\L), \dim(\sigma) = d} \left(\int_{Y_\sigma} \chi_i |\Res_{Y_\sigma}(\psi)|^{2/m} \right) b_{\sigma}^{-1} \lambda_\sigma.
	\end{equation}
	
	Combining Equations \eqref{eqn43} and \eqref{eqn44}, we are done. 
\end{proof}

Now, we are ready to prove Theorem \ref{ConvergenceTheorem}.

\begin{proof}[Proof of Theorem \ref{ConvergenceTheorem}]
	Let $f$ be a continuous compactly supported function on $(\X,B)^\hyb$. 
	Fix a global log function $\Log_V$ and let $\chi$ be a continuous function that is $1$ in a neighborhood of $\X_0$ and is supported in $\pi^{-1}(\frac{1}{2}\overline{\D})$. By replacing $f$ by \linebreak $(f|_{\Delta(\X,B)} \circ \Log_V)\cdot \chi - f$, we may assume that $f|_{\Delta(\X,B)} = 0$. 
	
	Let $K = \supp(f)$ and pick $\epsilon > 0$. Since $f$ is continuous and compactly supported, there exists $t_0 \ll 1$ such that $|f| \leq \epsilon$ on $\pi^{-1}(t\overline{\D})$. Then, $\limsup_{t \to 0} |\int_{X_t} f d\mu_t| \leq \epsilon \limsup_{t \to 0} \int_{K \cap X_t} d\mu_t$, which goes to 0 as $\epsilon \to 0$ by Lemma \ref{CompactSetFiniteVolume}. 
\end{proof}

\begin{eg}[Convergence of Haar measure on $(\P^1,0 + \infty)^\hyb$]
  \label{ExampleConvergenceHaarP1}
In the setting of Example \ref{HybridSpaceOfP1TimesD}, let $\mu_t$ denote the Haar measure on $(\P_1 \setminus \{0,\infty\}) \times \{t\}$. Then, $\frac{1}{2\pi\log|t|^{-1}}\mu_t$ weakly converges to the Lebesgue measure on $\R \simeq \Delta(\X,B)$ as measures on the hybrid space $(\X,B)^\hyb$. 
\end{eg}

More generally, we can prove a similar result for toric varieties. 

\begin{eg}[Convergence for a torus]
  \label{EgToricVarieties}
  Let $N$ be a free abelian group of rank $n$. Let $M = \Hom_{\Z}(N,\Z)$ and $T = \Spec(\C[M])$ be the associated torus. Let $Y$ be a smooth projective toric compactification of $T$ i.e.~a smooth projective toric variety associated to a regular fan in $N_{\R}$ (For example, $Y = \P^n$). Let $\omega$ be a torus invariant meromorphic 1-form on $Y$. Note that there is a canonical choice of such an $\omega$ up to a sign and $\omega$ has poles of order one along all boundary divisors. Let $D$ be the reduced divisor given by the sum of the boundary divisors. Then, $\omega \in H^0(K_Y + D)$.
  
  Consider the constant family $Y \times \D^*$ over $\D$. Then $(Y \times \D^*, D \times \D^*)$ is log smooth and consider the projective snc model $\Y = Y \times \D$ of $(Y \times \D^*,D \times \D^*)$. Then, $\Delta(\X,B)$ is canonically isomorphic to $N_\R$, with the faces given by the cones in the fan defining $Y$. Thus we have a hybrid space given by $(\Y,D \times \D)^\hyb = (T \times \D^*) \cup N_\R$. We also get a top-dimensional meromorphic form $\eta$ on $Y \times \D^*$ whose restriction to each fiber gives the measure $\omega$. Let $\mu_t$ denote the measure induced by $\omega$ on the fiber $ T \times \{t\}$ scaled by a factor of $\frac{1}{(2\pi\log|t|^{-1})^n}$.

  Applying Theorem \ref{MainThmSmoothLC} to this setting, we get that the the measures $\mu_t$ converge to the Lebesgue measure on each of the cones. The Lebesgue measures on each of the cones is exactly the Lebesgue measure on $N_\R$ (normalized by $N$) restricted to that cone. Thus, $\mu_t$ converges weakly to the Lebesgue measure on $N_\R$ as $t \to 0$.  
 \end{eg}

\section{Convergence on the limit hybrid model}
\label{SectionNonArchimedean}

The choice of a hybrid space $(\X,B)^\hyb$ depends on the choice of the model $\X$ of $X$. We construct a canonical hybrid space $(X,B)^\hyb$ that does not depend on a choice of a model. Such a space is obtained by an inverse limit $(X,B)^\hyb = \varprojlim_{\X} (\X,B)^\hyb$. Theorem \ref{LimitSkeletonIsNonArchimedean} implies that this definition matches with the definition in the introduction when $(X,D)$ is a projective and meromorphic over $D^*$.  We also explain how the space $(X,B)^\hyb$ can itself be viewed as an analytic space when $(X,D)$ is projective and meromorphic over $\D^*$.

\subsection{The limit hybrid model}
Given two models $\X',\X$ of $(X,B)$, there is always a bimeromorphic map $\X' \dashrightarrow \X$ induced by the given isomorphism with $X$ over $\D^*$. We say that $\X'$ dominates $\X$ when this bimeromorphic map extends to a morphism. More precisely, we say that  $\X'$ dominates $\X$ if we have a proper holomorphic map $\X' \to \X$ which commutes with the projection to $\D$ and extends the identity map $X \to X$.

When $\X$ and $\X'$ are proper snc models of $(X,B)$ such that $\X'$ dominates $\X$ via a map $\pi : \X' \to \X$, we also have an integral affine map $\pi_* : \Delta(\X',B) \to \Delta(\X,B)$ and also a continuous surjective map $(\X',B)^\hyb \to (\X,B)^\hyb$ as in Section 4.2 and Section 4.8 of \cite{BJ}. If $\sigma_{Y'}$ is a face of $\Delta(\X',B)$, associated to a stratum $Y'$ of $\X_0'$, by identifying the smallest stratum $Y$ that contains $\pi(Y')$. Then, $\pi_* (\sigma_{Y'}) \subset \sigma_Y$. We describe these maps in detail in the projective case in the following subsection.

The collection of all proper snc models of $(X,B)$ is a directed system. See \cite[Lemma 4.1]{BJ} for more details. 
We can then define $(X,B)^\hyb := \varprojlim_{\X} (\X,B)^\hyb$. It is easy to see that we have a projection map $(X,B)^\hyb \to \D$ such that $\pi^{-1}(\D^*) \simeq X \setminus B$, and the central fiber $(X,B)^\hyb_0$  is  $\varprojlim_{\X} \Delta(\X,B)$, where the inverse limit runs over all proper snc models $\X$ of $(X,B)$, and the inverse limit is taken in the category of topological spaces. Theorem \ref{LimitSkeletonIsNonArchimedean} tells us why this definition of $(X,B)^\hyb$ matches with the one in the introduction. 

Suppose now that $(X,B)$ is projective over $\D^*$, i.e. we can view $X$ as a closed subset of $\P^N \times \D^*$ for some $N$ such that $X$ and $B$ are cut out by polynomials whose coefficients are holomorphic on $\D^*$ and meromorphic on $\D$. Thus, we can view the coefficients of the defining equations as elements of $\Ct$. Using the same defining equations in $\P^N_{\Ct}$, we get varieties $X_{\Ct}$ and $B_{\Ct}$ over $\Spec \Ct$. A smooth projective snc model $\X$ of $(X,B)$ gives rise to an snc model $\X_{\C[[t]]}$ over $\Spec \C[[t]]$ whose generic fiber is $X_{\Ct}$ and special fiber is $\X_0$, and $\X_0 + \overline{B_{\Ct}}$ is an snc divisor in $\X_{\C[[t]]}$.
Then, we can define $\Delta(\X_{\C[[t]]},B_{\Ct})$ as the dual complex of the divisor $(\X_{\C[[t]]})_0 + B_{\Ct}$, and we have that $\Delta(\X_{\C[[t]]},B_{\Ct}) \simeq \Delta(\X,B)$. 

The following theorem, analogous to \cite[Theorem 10]{KS} \cite[Cor 3.2]{BFJ16}, realizes the central fiber $(X,B)^\hyb_0$ as a non-Archimedean space. 
\begin{thm}
	\label{LimitSkeletonIsNonArchimedean}
	We have an isomorphism $X_{\Ct}^\an \setminus B_{\Ct}^\an \simeq \varprojlim_{\X} \Delta(\X,B)$ where $(\_)^\an$ denotes the Berkovich analytification with respect to the $t$-adic norm on $\Ct$ and, the inverse limit is taken over all smooth projective snc models $(\X,B)$ of $(X,B)$.
\end{thm}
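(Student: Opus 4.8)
The plan is to adapt the retraction theorem of Kontsevich--Soibelman \cite{KS} and Boucksom--Favre--Jonsson \cite{BFJ16} from the case $B=0$ to the present log setting, the only genuinely new features being the horizontal boundary $\overline{B}$ and the removal of $B_{\Ct}^\an$. For each smooth projective snc model $\X$ I would produce two maps: an embedding $\iota_\X \colon \Delta(\X,B) \hookrightarrow X_{\Ct}^\an \setminus B_{\Ct}^\an$ realizing the dual complex as a skeleton $\Sk(\X,B)$, and a retraction $r_\X \colon X_{\Ct}^\an \setminus B_{\Ct}^\an \to \Sk(\X,B) \simeq \Delta(\X,B)$. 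The desired isomorphism is then $\rho \colon x \mapsto (r_\X(x))_\X$, and the work is to check that $\rho$ is a well-defined continuous bijection whose inverse is continuous.

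First I would construct $\iota_\X$ using quasi-monomial (Abhyankar) valuations. Working on $\X_{\C[[t]]}$ and choosing local coordinates adapted to a stratum $Y = E_{i_0}\cap\cdots\cap E_{i_p}\cap \overline{B_{j_1}}\cap\cdots\cap \overline{B_{j_q}}$, a point $(\underline{x},\underline{y})\in\sigma_Y$ determines the monomial valuation $v$ with $v(z_{i_k})=x_k$ and $v(w_{j_l})=y_l$; the defining relation $\sum_i b_i x_i = 1$ is exactly the condition that $v$ restricts to the $t$-adic valuation, and finiteness of the $y_l$ says precisely that $v$ is not centred on $B$ on the generic fibre, i.e.\ $v\notin B_{\Ct}^\an$. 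I would check that $\iota_\X$ is a homeomorphism onto its image and that, conversely, approaching $B_{\Ct}^\an$ corresponds to letting some $y_l\to\infty$, matching the unbounded faces of $\Delta(\X,B)$. The retraction $r_\X$ is built, as in Thuillier's and \cite{BFJ16}'s constructions, from the centre (reduction) map: given a semivaluation $v$, its centre on $\X_0$ lies on a minimal collection of components $E_i,\overline{B_j}$, and $r_\X(v)$ is the monomial valuation taking the same values $v(z_i),v(w_j)$ on these. Since $v\notin B_{\Ct}^\an$ forces $v(w_j)<\infty$, the image is a genuine finite point of $\Delta(\X,B)$; the boundary coordinates are treated exactly like the special-fibre ones, only without the normalisation constraint. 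I would then verify compatibility $\pi_*\circ r_{\X'} = r_\X$ for a domination $\pi\colon\X'\to\X$ using the map $\pi_*$ of the previous subsection, so that the $r_\X$ assemble into the continuous map $\rho$ to $\varprojlim_\X\Delta(\X,B)$.

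For bijectivity a key observation is that, for a fixed prime component $B_j$, the value $v(w_j)$ depends only on $v$ and the generic fibre, not on the model; consequently $r_\X(v)(w_j)=v(w_j)$ for every $\X$, and under compatibility the $B_j$-coordinate of a system $(\xi_\X)\in\varprojlim_\X\Delta(\X,B)$ is constant in $\X$. Injectivity then follows because every $f$ becomes monomial on a high enough model $\X$, so that $r_\X(v)(f)=v(f)$; hence $r_\X(v)=r_\X(v')$ for all $\X$ forces $v(f)=v'(f)$ for all $f$, using density of $\bigcup_\X\Sk(\X,B)$. For surjectivity, given a compatible system $(\xi_\X)$ I would set $v(f):=\lim_\X \iota_\X(\xi_\X)(f)$, the coherent limit of the corresponding quasi-monomial valuations, which defines a semivaluation with $r_\X(v)=\xi_\X$; the observation above guarantees $v(w_j)<\infty$, so $v$ lands in $X_{\Ct}^\an\setminus B_{\Ct}^\an$ and not on the boundary.

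The main obstacle is the final step: upgrading the continuous bijection $\rho$ to a homeomorphism, since the compactness that makes the $B=0$ case automatic is now unavailable. My plan is to reduce to the compact case by a compact exhaustion governed by distance to $B$. For $N>0$ let $W_N=\{v : |w_j|_v \ge e^{-N}\ \text{for all } j\}$, a closed, hence compact, subset of $X_{\Ct}^\an$ contained in $X_{\Ct}^\an\setminus B_{\Ct}^\an$, with $\bigcup_N W_N = X_{\Ct}^\an\setminus B_{\Ct}^\an$. Because $\rho$ preserves each value $v(w_j)$, it carries $W_N$ bijectively onto the truncation $\varprojlim_\X\{\xi\in\Delta(\X,B): y\text{-coordinates}\le N\}$, an inverse limit of compact sets and hence compact. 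Thus $\rho|_{W_N}$ is a continuous bijection from a compact space to a Hausdorff space, so a homeomorphism; letting $N\to\infty$ and checking that these exhaustions are compatible with the inverse-limit topology yields that $\rho$ is itself a homeomorphism. The delicate points to nail down are the continuity of $r_\X$ at the boundary, the model-independence of the boundary values (which makes the exhaustion compatible with the domination maps $\pi_*$), and the coordinate-independence of the quasi-monomial valuations defining $\iota_\X$.
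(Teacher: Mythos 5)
Your construction of $\iota_\X$ and $r_\X$, and the compatibility check $\pi_*\circ r_{\X'}=r_\X$, match the paper's (its $i_{(\X,B)}$ and $r_{(\X,B)}$). But your ``key observation'' --- that $v(w_j)$ depends only on $v$ and the generic fibre, so that $r_\X(v)(w_j)=v(w_j)$ for every model and the $B_j$-coordinates of a compatible system are constant in $\X$ --- is false, and it carries the weight of your whole argument. The coordinate $s_j$ is the value of $v$ on a local equation of the \emph{closure} of $B_j$ in $\X$, and this changes with the model by vertical factors: for a domination $\rho:\X'\to\X$ one has $\rho^*w_j = v_j\cdot w_j'\cdot \prod_k (z_k')^{d_{j,k}}$, so the transition map on dual complexes is $s_j = s_j' + \sum_k d_{j,k}\,r_k'$, which shifts the boundary coordinates whenever the center meets the exceptional locus. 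The paper's own computation for the blowup of $\P^1\times\D$ at a point of $E_0\cap\overline{B}_1$ exhibits exactly this: the subdivision map sends $(x_0,\dots,x_p,y_1,\dots,y_q)\mapsto (x_0,x_1+x_0,\dots,y_1+x_0,\dots)$, so the $y$-coordinate is not preserved. This breaks two steps at once: the finiteness of $v(w_j)$ in your surjectivity argument no longer comes for free, and --- more seriously --- your compact exhaustion is incompatible with the inverse system: $\rho$ does \emph{not} carry $W_N$ onto $\varprojlim_\X\{\xi\in\Delta(\X,B): y\text{-coordinates}\le N\}$, since the $y$-coordinates grow (unboundedly, as the $d_{j,k}$ grow under iterated blowups along strata meeting $\overline{B}$) as one passes down the tower. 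So the homeomorphism upgrade, which you correctly identify as the main obstacle, is precisely where the proposal fails.

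The paper's substitute for compactness is a genuinely different device. Its key technical lemma (Lemma \ref{EverythingLiesInBoundedPiece}) shows that for any compact $K\subset\Delta(\X,B)$ there is a dominating snc model $\X'$, obtained by resolving the blowup of $\X$ along the ideal $\I_E^N+\I_{B_1}$ with $N$ chosen so that $N\nu(E)\ge\nu(B_1)$ on $K$, such that $r_{\X',\X,B}^{-1}(K)$ lies in the \emph{bounded} subcomplex $\Delta(\X')\subset\Delta(\X',B)$; one then inducts on the components of $B$. Consequently (Corollary \ref{CenterAvoidsB}) the center of any compatible system eventually avoids $B$. With this in hand the paper does not reprove the bijection directly at all: it builds the map $\phi:\varprojlim_\X\Delta(\X,B)\to\varprojlim_\X\Delta(\X)\simeq X_{\Ct}^\an$ from the coordinate-forgetting maps $\phi_\X$, proves $\phi$ is injective and open using the corollary, and then only needs to identify the image of $\phi$ as the complement of the image of $B_{\Ct}^\an$ under the known $B=0$ homeomorphism of Kontsevich--Soibelman and Boucksom--Favre--Jonsson. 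If you want to salvage your direct route, you would need a true replacement for the constancy claim --- for instance, that for each $v\in X_{\Ct}^\an\setminus B_{\Ct}^\an$ the retractions $r_\X(v)$ eventually land in the bounded parts $\Delta(\X)$ --- but that is essentially the paper's lemma, and proving it requires the weighted-blowup construction rather than a soft exhaustion argument.
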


We will prove the above theorem in the following section, after setting up some preliminaries. 

\subsection{The central fiber of the limit hybrid model as a non-Archimedean space}
\label{skeleton}
For the remainder of this subsection, we assume that $X$ is a smooth proper variety over the discretely valued field $K = \Ct$, $B \subset X$ is a snc divisor and, $\X$ is a smooth proper integral scheme over $R = \C[[t]]$ along with a specified isomorphism $\X_{K} \simeq X$ such that $\X$ is an snc model of $(X,B)$ (that is, $\X_{0} + B$ is a snc divisor in $\X$). Then, $\Delta(\X,B)$ is the dual intersection complex of the divisor $\X_0+ B$. We also have a CW complex $\Delta(\X) := \Delta(\X,0)$, which can be viewed as a subcomplex of $\Delta(\X,B)$. Let $X^\an$ and $B^\an$ denote the Berkovich analytification of $X$ and $B$, respectively, with respect to the $t$-adic norm on $K$.

We have an inclusion $i_\X : \Delta(\X) \to X^\an$ and a retraction $r_\X : X^\an \to \Delta(\X)$ as constructed in \cite{MN}. 
 We would like to do a similar construction for $\Delta(\X,B)$ and $X^\an \setminus B^\an$.

Let $\X$ be a proper snc model of $(X,B)$. Then, we have an inclusion map $i_{(\X,B)} : \Delta(\X,B) \to X^\an \setminus B^\an$, which is given as follows.
Let $Y =_{\text{locally}} E_0 \cap \dots \cap E_p \cap B_1 \cap \dots \cap B_q$ denote a stratum of $\X_0$. Pick a point $(r_0,\dots,r_p,s_1,\dots,s_q) \in \sigma_{Y}$. Let $z_i$ and $w_j$ locally define $E_i$ and $B_j$ near $Y$ for $0 \leq i \leq p$ and $1 \leq j \leq q$. Then, we have an isomorphism $\widehat{\O_{\X_0,Y}} \simeq \C[[z_0,\dots,z_p,w_1,\dots,w_q]]$. Pulling back the valuation defined by $\nu(\sum_{\alpha \in \N^{p+1}, \beta \in \N^q }c_{\alpha,\beta}z^\alpha w^\beta) = \min_{c_{\alpha,\beta} \neq 0}\{\alpha\cdot \underline{r} + \beta \cdot \underline{s}\}$, we get an element of $X^\an \setminus B^\an$. It is clear that $i_{(\X,B)}$ is injective, and it follows from \cite[Prop. 3.1.4]{MN} that $i_{(\X,B)}$ is continuous. The image of $i_{(\X,B)}$ is denoted as $\Sk(\X,B)$.

We also have a continuous retraction map $r_{(\X,B)} : X^\an \setminus B^\an \to \Delta(\X,B)$, which is a left inverse to the map $i_{(\X,B)}$, defined as follows.
Since $\X$ is proper, every valuation in $X^\an \setminus B^\an$ has a center in $\X_0$. Pick $x \in X^\an \setminus B^\an$. Pick the smallest stratum $Y =_{\text{locally}} E_0 \cap \dots \cap E_p \cap B_1 \cap \dots \cap B_q $ containing $\red_\X(x)$.
Then, we define $$r_{(\X,B)}(x) = (\nu_x(E_0),\dots,\nu_x(E_p),\nu_x(B_1),\dots,\nu_x(B_q))$$ in $\sigma_Y$.

To see why $r_{(\X,B)}$ is continuous, recall that the map $X^\an \to \X_0$ taking any valuation to its center is anti-continuous (i.e. the inverse image of a closed set is open).
For any stratum $Y =_{\text{locally}} E_0 \cap \dots \cap E_p \cap B_1 \cap \dots \cap B_q $ of $\X_0$, the subset $r_{(\X,B)}^{-1}(\sigma_Y) \subset X^\an \setminus B^\an$ is a closed set as it corresponds to a subset of $X^\an$ whose center lies on an open set of $\X_0$. Therefore, it is enough to prove that $r_{(\X,B)}|_{r_{(\X,B)}^{-1}(\sigma_Y)} : r_{(\X,B)}^{-1}(\sigma_Y) \to \sigma_Y$ is continuous for all possible strata $Y$. But this is clear from the description of the map above.

We also have a continuous retraction map $\phi_{\X} : \Delta(\X,B) \to \Delta(\X)$, which we obtain from the composition.
	$$ \Delta(\X,B) \xrightarrow{i_{(\X,B)}} X^\an \setminus B^\an  \hookrightarrow X^\an  \xrightarrow{r_{\X}} \Delta(\X).$$
More explicitly, if $Y =_{\text{locally}} E_0 \cap \dots \cap E_p \cap B_1 \cap \dots \cap B_q$, let $Y' =_{\text{locally}} E_0 \cap \dots \cap E_p$ containing $Y$. Then, $\phi_{\X}(\sigma_{Y}) \subset \sigma_{Y'}$ and 
$$ \phi_{\X}(r_0, \dots, r_p,s_1,\dots,s_q) = (r_0,\dots,r_p) .$$	 

If $\X$ and $\X'$ are two proper snc models of $(X,B)$ such that $\X'$ dominates $X$, then there is a surjective map $r_{\X',\X,B} : \Delta(\X',B) \to \Delta(\X,B)$  given by 
$$\Delta(\X',B) \xhookrightarrow{i_{(\X',B)}} X^\an \setminus B^\an \xtwoheadrightarrow{r_{(\X,B)}} \Delta(\X,B).$$ The surjectivity of the map follows from  \cite[Prop. 3.17 ]{MN}. 

We have an explicit description of $r_{\X',\X,B}$ similar to \cite[Section 4.2]{BJ} as follows. Let $\rho : \X' \to \X$ denote the proper map between $\X'$ and $\X$, let $Y' = E_0' \cap \dots \cap E'_{p'} \cap B_1 \cap \dots \cap B_{q'}$ be a stratum of $\X'_0$, and let $Y = E_0 \cap \dots \cap E_p \cap B_1 \dots \cap B_q$ be the stratum of $\X_0$ containing the image of $Y'$.  Note that $q' \leq q$. Let $E_i, B_j$ be locally defined by $z_i' = 0$ and $w_j' = 0$ near $Y'$ and let $E_i$ and $B_j$ be locally defined by $z_i = 0$ and $w_j = 0$ near $Y$. Then, we can write $\rho^*(z_i) = u_i \cdot \prod_{k = 0}^{p'} (z'_k)^{c_{i,k}} $ and $\rho^*w_{j} = v_j \cdot w'_j \cdot  \prod_{k = 0}^{p'} (z'_k)^{d_{j,k}}$ for units $u_i,v_j \in \O_{\X',Y'}$ and for some $c_{i,k}, d_{j,k} \in \N$. Then, $r_{\X',\X,B}(\sigma_{Y'}) \subset \sigma_{Y}$ and is given by $$r_i = \sum_{k=0}^{p'} c_{i,k} r_k'$$ and $$s_j = s_j' + \sum_{k=0}^{p'} d_{j,k} r'_j .$$ 

It follows from the explicit description of $r_{\X',\X,B}$ that 
\begin{equation}
\label{LogWellBehavedWithRetraction}
	\Log_{\X'} \circ r_{\X',\X,B} = \Log_{\X} + O(\frac{1}{\log|t|}) 
\end{equation}
 uniformly on compacts in a neighborhood of  $\X_0$ as $t \to 0$.

\begin{prop}
	\label{MapPhiExists}
We have a commutative diagram 
$$
\begin{tikzcd}
\Delta(\X',B) \arrow[r, "\phi_{\X'}"] \arrow[d,"r_{\X',\X,B}"]& \Delta(\X') \arrow[d,"r_{\X',\X}"] \\
\Delta(\X,B) \arrow[r, "\phi_{\X}"]& \Delta(\X)
\end{tikzcd}
$$
which gives rise to a continuous map $\phi : \varprojlim_{\X} \Delta(\X,B) \to \varprojlim_{\X} \Delta(\X)$. 
\end{prop}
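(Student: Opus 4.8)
The plan is to establish the commutativity of the square for every pair of proper snc models $\X' \geq \X$ and then to produce $\phi$ formally from the universal property of the inverse limit. Since the two vertical maps $r_{\X',\X,B}$, $r_{\X',\X}$ and the two horizontal maps $\phi_\X$, $\phi_{\X'}$ all have the explicit coordinate descriptions recalled just above, I would reduce the commutativity to a direct comparison of these formulas rather than manipulating the analytic definitions.

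Concretely, I would fix a point $\xi = (r_0,\dots,r_{p'},s_1,\dots,s_{q'}) \in \sigma_{Y'}$ for a stratum $Y' = E'_0 \cap \dots \cap E'_{p'} \cap B_1 \cap \dots \cap B_{q'}$ of $\X'_0$ and chase it both ways. Going right-then-down, $\phi_{\X'}$ simply discards the $B$-coordinates, sending $\xi \mapsto (r_0,\dots,r_{p'})$, after which $r_{\X',\X}$ acts by $r_i = \sum_k c_{i,k} r'_k$, where $c_{i,k} = \ord_{E'_k}(\rho^* E_i)$ is the multiplicity of $E'_k$ in the pullback of the vertical divisor $E_i$. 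Going down-then-right, $r_{\X',\X,B}$ produces the point with $E$-coordinates $r_i = \sum_k c_{i,k} r'_k$ and $B$-coordinates $s_j = s'_j + \sum_k d_{j,k} r'_k$, after which $\phi_\X$ again discards the $B$-coordinates. The essential point is that the $E$-coordinate formula $r_i = \sum_k c_{i,k} r'_k$ comes only from the expansion $\rho^*(z_i) = u_i \prod_k (z'_k)^{c_{i,k}}$, which contains no horizontal factor $w'_j$: the pullback of a component of the central fiber is supported on $\rho^{-1}(\X_0) = \X'_0$, hence on the vertical divisors $E'_k$ alone. Thus the transformation of the $E$-coordinates is insensitive to the $B$-data, while $\phi_\X$ and $\phi_{\X'}$ merely forget that data, so the two compositions coincide.

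The one point requiring care is the bookkeeping of strata and faces. The face reached by $\phi_{\X'}$ is $\sigma_{Y'_0}$ for $Y'_0 = E'_0 \cap \dots \cap E'_{p'}$, and $r_{\X',\X}$ carries it into the face of the smallest \emph{vertical} stratum of $\X_0$ containing $\rho(Y'_0)$, which may involve fewer components than the vertical part $E_0 \cap \dots \cap E_p$ of $Y$. I would handle this by observing that whenever $E_i \not\supset \rho(Y'_0)$ the pullback $\rho^* z_i$ is a unit near $Y'_0$, so $c_{i,k}=0$ and the corresponding coordinate $r_i = \sum_k c_{i,k} r'_k = 0$ vanishes; this is precisely the attaching-map identification sending the extra coordinates to $0$. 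Hence the single formula $r_i = \sum_k c_{i,k} r'_k$ for $i = 0,\dots,p$ describes both paths consistently as points of $\Delta(\X)$, and the square commutes.

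Granting the commutativity for every $\X' \geq \X$, the family $\{\phi_\X\}$ is a morphism between the inverse systems $\{\Delta(\X,B),\, r_{\X',\X,B}\}$ and $\{\Delta(\X),\, r_{\X',\X}\}$ over the directed set of proper snc models, so the universal property of $\varprojlim$ yields $\phi = \varprojlim_\X \phi_\X$. Continuity is then automatic: the composite of $\phi$ with each projection $\varprojlim_\X \Delta(\X) \to \Delta(\X)$ equals $\phi_\X$ precomposed with the continuous projection from $\varprojlim_\X \Delta(\X,B)$, and each $\phi_\X$ is continuous, so $\phi$ is continuous by the characterization of maps into an inverse limit. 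I expect the stratum/face bookkeeping to be the only genuinely delicate step; once the vanishing $c_{i,k}=0$ for $E_i \not\supset \rho(Y'_0)$ is recorded, the remainder is a routine formal consequence.
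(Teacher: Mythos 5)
Your proposal is correct, but it proves commutativity by a genuinely different route than the paper. You work entirely inside the dual complexes, comparing the explicit integral affine formulas: $\phi_{\X'}$ and $\phi_{\X}$ forget the $B$-coordinates, the retractions act by $r_i = \sum_k c_{i,k} r'_k$ (plus the $s$-formula, which is then discarded), and the crux is that the $E$-coordinate transformation sees no horizontal data because $\rho^*E_i$ is supported on $\rho^{-1}(\X_0) = \X'_0$; the stratum bookkeeping is settled by your vanishing observation $c_{i,k} = 0$ whenever $E_i \not\supset \rho(Y'_0)$. That observation is right, though one phrase should be sharpened: $\rho^*z_i$ need not be a unit \emph{in a neighborhood} of $Y'_0$ (the hypersurface $\rho^{-1}(E_i)$ can meet $Y'_0$ without containing it); it is a unit at the generic point of $Y'_0$, i.e.\ in $\O_{\X',Y'_0}$, and what you actually use --- $\ord_{E'_k}(\rho^*E_i)=0$ for the components $E'_k \supset Y'_0$, since $c_{i,k}>0$ would force $\rho(E'_k)\subset E_i$ and hence $\rho(Y'_0)\subset E_i$ --- is exactly correct. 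The paper instead argues upstairs on the Berkovich space: since by definition $r_{\X',\X,B} = r_{(\X,B)} \circ i_{(\X',B)}$ and $\phi_{\X'}$ factors through $i_{(\X',B)}$ followed by $r_{\X'}$, it suffices, given the identity $r_{\X',\X} \circ r_{\X'} = r_{\X}$ from \cite{MN}, to prove the single equation $\phi_{\X} \circ r_{(\X,B)} = r_{\X}$ on all of $X^\an \setminus B^\an$; and this is immediate because for any valuation $\nu$ whose center has minimal stratum $E_0 \cap \dots \cap E_p \cap B_1 \cap \dots \cap B_q$ in $\X_0 + B$, both sides evaluate to $(\nu(E_0),\dots,\nu(E_p))$, the minimal vertical stratum being automatically $E_0 \cap \dots \cap E_p$. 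The valuative route is shorter and bypasses the face-matching entirely (it is absorbed into the definition of the retractions), and the intermediate identity is reused in the proof of Theorem \ref{LimitSkeletonIsNonArchimedean}; your route is more explicit, displays all four maps as piecewise integral affine, and is self-contained modulo the coordinate formula for $r_{\X',\X,B}$ --- which, note, the paper itself derives from the valuative description, so your argument is not independent of that machinery, only organized differently. Your passage to the limit via the universal property and the resulting continuity are exactly what the paper leaves implicit, and are fine.
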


\begin{proof}
	To see that the diagram commutes, it enough to use the fact that $r_{\X',\X} \circ r_{\X'} = r_{\X}$ \cite[Prop. 3.1.7]{MN} and show that $ \phi_{\X} \circ r_{(\X,B)} = r_{\X}$ on $X^\an \setminus B^\an$. Pick $\nu \in X^\an \setminus B^\an$. Let $Y =_{\text{locally}} E_0 \cap \dots \cap E_p \cap B_1 \cap \dots \cap B_q$ be the minimal stratum of $\X_0 + B$ containing the center of $\nu$. Then, $$r_{(\X,B)}(\nu) = (\nu(E_0),\dots,\nu(E_p),\nu(B_1),\dots,\nu(B_q))$$ in $\sigma_Y$.
	
	 Let $Y' =_{\text{locally}} E_0 \cap \dots E_p$ be the stratum containing $Y$. Then, $Y'$ is the minimal stratum in $\X_0$ containing the center of $\nu$ and $r_{\X}(\nu) = (\nu(E_0),\dots,\nu(E_p))$ in $\sigma_{Y'}$. It follows from the description of $\phi_{\X}$ that $\phi_{\X}(r_{(\X,B)}(\nu)) = r_\X(\nu)$  
\end{proof}

\begin{prop}
	If $\X'$ is a blowup of $\X$ along a stratum $Y = E_0 \cap \dots \cap E_p \cap B_1 \cap \dots \cap B_q$, then $r_{\X',\X,B} : \Delta(\X',B) \to \Delta(\X,B)$ is a homeomorphism obtained by a subdivision.
\end{prop}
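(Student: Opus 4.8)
The plan is to reduce everything to a local computation in the charts of the blowup and to match it against the explicit formula for $r_{\X',\X,B}$ recorded just before the statement. Choose local coordinates $(z_0,\dots,z_p,w_1,\dots,w_q,y)$ adapted to $Y$, so that $E_i=\{z_i=0\}$, $B_j=\{w_j=0\}$ and $Y=\{z_0=\dots=z_p=w_1=\dots=w_q=0\}$. The blowup $\rho:\X'\to\X$ along $Y$ is covered by $p+1+q$ charts, one for each coordinate being divided out, and the exceptional divisor $E'$ is the new component of the central fiber. A quick pullback computation from $t=(\text{unit})\,z_0^{b_0}\cdots z_p^{b_p}$ shows that $E'$ occurs in $\X'_0$ with multiplicity $b'=\sum_{i=0}^p b_i$, so the vertex $\sigma_{E'}$ retracts to the single interior point $w=(\tfrac1{b'},\dots,\tfrac1{b'})\in\sigma_Y$. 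Since $\rho$ is an isomorphism away from $Y$, the map $r_{\X',\X,B}$ is the identity on every face not containing $\sigma_Y$, so it suffices to describe it on the closed star of $\sigma_Y$, i.e.\ on the faces $\sigma_W$ corresponding to strata $W\subseteq Y$.

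Next I would compute the pullback formulas in each chart and feed them into the displayed description of $r_{\X',\X,B}$. In the chart dividing by $z_{i^*}$ (with $i^*\in\{0,\dots,p\}$) one has $\rho^*z_{i^*}=z'_{i^*}$, $\rho^*z_i=z'_{i^*}z'_i$ and $\rho^*w_j=z'_{i^*}w'_j$, where $\{z'_{i^*}=0\}=E'$ and the other coordinates define the strict transforms. Writing a point of the corresponding maximal new face $\sigma_{Y'_{i^*}}$ as the monomial valuation with $\nu(z'_{i^*})=r$, $\nu(z'_i)=r_i$, $\nu(w'_j)=s_j$, the retraction formula gives the integral-affine map
\begin{equation*}
x_{i^*}=r,\qquad x_i=r+r_i\ (i\neq i^*),\qquad y_j=r+s_j,
\end{equation*}
subject to $b'r+\sum_{i\neq i^*}b_i r_i=1$; its image is exactly the cell $C_{i^*}=\{(x,y)\in\sigma_Y: x_{i^*}=\min_{i,j}(x_i,y_j)\}$, and the map is inverted by $r=x_{i^*}$, $r_i=x_i-x_{i^*}$, $s_j=y_j-x_{i^*}$. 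The chart dividing by $w_{j^*}$ yields in the same way the cell $D_{j^*}=\{(x,y)\in\sigma_Y: y_{j^*}=\min_{i,j}(x_i,y_j)\}$.

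Finally I would assemble these pieces. The cells $C_{i^*}$ and $D_{j^*}$ are the regions of $\sigma_Y$ on which a prescribed coordinate attains the minimum; they cover $\sigma_Y$, meet along the common faces where two coordinates tie, and each is the image of exactly one maximal face of $\Delta(\X',B)$ under an integral-affine isomorphism. Thus $r_{\X',\X,B}$ realizes $\sigma_Y$ as the ``minimum-coordinate'' subdivision with new vertex $w$ (all $C_{i^*},D_{j^*}$ contain $w$). The same computation, carried out on a deeper stratum $W\subseteq Y$, only involves the coordinates $x_0,\dots,x_p,y_1,\dots,y_q$ and leaves the extra coordinates untouched, so it subdivides $\sigma_W$ compatibly as $(\text{subdivision of }\sigma_Y)\times(\text{remaining orthant})$. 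Gluing these compatible affine isomorphisms over the whole star, together with the fact that the map is the identity elsewhere, shows that $r_{\X',\X,B}$ is a homeomorphism realizing $\Delta(\X',B)$ as a subdivision of $\Delta(\X,B)$. The main obstacle is the non-compactness introduced by $B$: the naive stellar subdivision ``coning $w$ to the facets of $\sigma_Y$'' fails to cover an unbounded polyhedron, and one must check instead that the minimum-coordinate cells genuinely tile $\sigma_Y$ (including in the unbounded $y_j$-directions) and glue consistently across the unbounded faces $\sigma_W$. Verifying this tiling, and the integral-affine (not merely topological) compatibility of the gluing, is the crux of the argument.
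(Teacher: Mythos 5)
Your proposal is correct and takes essentially the same route as the paper's own proof: a local computation in the blowup charts, the multiplicity $b' = \sum_{i=0}^p b_i$ of the exceptional divisor giving the new vertex $\frac{1}{b'}(1,\dots,1) \in \sigma_Y$, and the explicit integral-affine formulas $x_{i^*} \mapsto x_{i^*}$, $x_i \mapsto x_i + x_{i^*}$, $y_j \mapsto y_j + x_{i^*}$ on the new maximal faces --- your $i^*=0$ chart is exactly the computation displayed in the paper. You in fact supply more detail than the paper (which works out one chart and leaves the rest as ``easy to check''), notably the identification of the images as minimum-coordinate cells with explicit affine inverses and the correct observation that on the unbounded faces one needs the tiling description rather than naive stellar coning.
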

\begin{proof}
  This follows from a local blowup computation. Let $E'$ denote the exceptional divisor in $\X'$. Then, the strata of $\X'$ that map down to $Y$ are of the form $E' \cap \tilde{E}_I \cap\tilde{B}_0 \cap \dots \cap \tilde{B}_q$ and $E' \cap \tilde{E}_0 \cap \dots \cap \tilde{E}_p \cap \tilde{B}_J$, where $I$ and $J$ denote subsets of $\{0,\dots,p\}$ and $\{1,\dots,q\}$ of size $p$ and $(q-1)$ respectively and $\tilde{E}_i$ and $\tilde{B}_j$ denote the strict transforms of $E_i$ and $B_j$.

  First, let's compute the image of $\sigma_{E'}$ in $\Delta(\X,B)$. Note that $\div_{\X'}(t) = \sum_i b_i\tilde{E}_i + (\sum_{i=0}^p b_i) E'$. Let $\nu_{E'}$ denote the divisorial valuation corresponding to $\sigma_{E'}$. Then,  $$\nu_{E'}(E_i) = \nu_{E'}(\tilde{E}_i + E') = \nu_{E'}(E') = \frac{1}{\ord_{E'}(t)} = \frac{1}{\sum_{i=0}^p b_i}$$ for all $i = 0,\dots, p$.  Similarly, $\nu_{E'}(B_j) = \frac{1}{\sum_{i=0}^p b_i}$ for all $j = 1,\dots, q$. Thus, the image of $\sigma_{E'}$ in $\Delta(\X,B)$ is $\frac{1}{\sum_{i=0}^p b_i}(1,\dots,1)$.

  It is easy to check that the $\Delta(X',B) \to \Delta(\X,B)$ is a subdivision obtained by adding the vertex $\sigma_{E'}$. For example, let's compute the image of $\sigma_{Y'}$ for $Y' = E' \cap \tilde{E}_1 \cap \dots \cap \tilde{E}_p \cap \tilde{B}_1 \cap \dots \cap \tilde{B}_q$. Note that $$ \sigma_{Y'} = \left\{(x_0,\dots,x_p,y_1,\dots,y_q) \ \Big| \  \left(\sum_{i=0}^p b_i\right) x_0 + \sum_{i=1}^p b_ix_i = 1 \right\}.$$

  Suppose $\nu$ is a valuation represented by $(x_0,\dots,x_p,y_1,\dots,y_q) \in \sigma_{Y'}$. Then, $\nu(E_0) = \nu(\tilde{E}_0 + E') = \nu(E') = x_0$ and $\nu(E_i) = \nu(\tilde{E}_i + E') = x_i + x_0$ for $i = 1,\dots,p$. Similarly, $\nu(B_j) = y_j + x_0$ for $j=1,\dots,q$.
  
  Thus, we see that $r_{\X',\X,B}|_{\sigma_{Y'}}$ is given by
  $$ (x_0,\dots,x_p,y_1,\dots,y_q) \mapsto (x_0,x_1+x_0,\dots,x_p+x_0,y_1+x_0,\dots,y_q+x_0) $$
\end{proof}

In general, the map $\Delta(\X') \to \Delta(\X)$ is not a homeomorphism, as illustrated by the following example. 

\begin{eg}[Blowup of $\P^1 \times \D$]
  Let the notation be the same as in Example \ref{P1DualComplex}. Let $E_0 = \P^1 \times \{0\}$, $B_1 = \{0\} \times \D$, $B_2 = \{\infty\} \times \D$. Let $\X'$ denote the blowup of $\X$ at $E_0 \cap B_1$ and let $\X''$ denote the blowup of $\X$ at some point in $E_0$ that is different from $0$ and $\infty$. Then $\Delta(\X',B)$ is obtained from $\Delta(\X,B)$ by adding a vertex along the ray $E_0 \cap B_1$ and $\Delta(\X'',B)$ is obtained from $\Delta(\X,B)$ by adding an extra vertex and joining it to $\sigma_{E_0}$.

  The retraction $r_{\X',\X,B} : \Delta(\X',B) \to \Delta(\X,B)$ is an isomorphism, while  $r_{\X'',\X,B} : \Delta(\X'',B) \to \Delta(\X,B)$ is given by collapsing the newly added edge and vertex to $\sigma_{E_0}$.  
  \begin{figure}
    \label{FigDualComplexAfterBlowup}
    \begin{minipage}{0.45\linewidth}
    \begin{tikzpicture}
      	\draw [<->] (-3,0)-- (0,0) -- (3,0);
	\draw [fill] (0,0) circle [radius = 0.05];
        \draw [fill] (1,0) circle [radius = 0.05];
        \draw [fill] (1,-0.5) circle [radius = 0];

      \end{tikzpicture}
      \caption*{(a)}
  \end{minipage}
  \hspace{0.05\linewidth}
  \begin{minipage}{0.45\linewidth}
    \begin{tikzpicture}
      	\draw [<->] (-3,0)-- (0,0) -- (3,0);
	\draw [fill] (0,0) circle [radius = 0.05];
        \draw (0,0) -- (1,-0.5);
        \draw [fill] (1,-0.5) circle [radius = 0.05];

      \end{tikzpicture}
      \caption*{(b)}
  \end{minipage}
  \caption{The dual complexes of the $(\P^1 \times \D, \{0\} \times \D + \{\infty\} \times \D)$} after  (a) blowing up at $(0,0)$, and (b) blowing up at (1,0) 
  \end{figure}
\end{eg}

\begin{lem}
	\label{EverythingLiesInBoundedPiece}
	Let $\X$ be a proper snc model of $(X,B)$ and let $K \subset \Delta(\X,B)$ be a compact set. Then there exists a proper snc model $\X'$ of $(X,B)$ dominating $\X$ such that $r_{\X',\X,B}^{-1}(K) \subset \Delta(\X')$. 
\end{lem}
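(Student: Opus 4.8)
The plan is to enlarge the bounded subcomplex $\Delta(\X')\subset\Delta(\X',B)$ by a finite sequence of blowups along strata until it swallows $K$, exploiting that such blowups induce subdivisions under which the retraction is a homeomorphism.

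First I would record two reductions coming from compactness. Since $\X$ is proper, $\Delta(\X,B)$ is a finite polyhedral complex whose only unbounded directions are the $B$-coordinates $s_j(\nu)=\nu(B_j)$, and these are continuous on $\Delta(\X,B)$, being restrictions under $i_{(\X,B)}$ of the evaluation maps on $X^\an\setminus B^\an$. Hence there is $N\in\N$ with $s_j<N$ on $K$ for every $j$; equivalently $K$ is contained in the bounded region $R_N:=\{\nu\in\Delta(\X,B)\mid s_j(\nu)\le N\ \forall j\}$. Moreover every stratum $Y$ occurring in $\Delta(\X,B)$ contains at least one component $E_i$ of $\X_0$ (the defining relation $\sum_i b_i x_i=1$ forces the vertical index set to be nonempty), so $Y$ lies over $0\in\D$; blowing up $Y$ is therefore an isomorphism over $\D^*$ and yields another proper snc model of $(X,B)$ dominating $\X$.

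Next, using the proposition that a blowup along a stratum induces a homeomorphism $r_{\X',\X,B}$ realized by a subdivision, I would perform an iterated sequence of such blowups and, by the functoriality of retractions (the analogue of $r_{\X',\X}\circ r_{\X'}=r_\X$ used above), keep $r_{\X',\X,B}\colon\Delta(\X',B)\to\Delta(\X,B)$ a homeomorphism throughout. Under this identification the goal becomes purely combinatorial: arrange that the image $r_{\X',\X,B}(\Delta(\X'))$ of the bounded subcomplex contains $R_N$, since then $r_{\X',\X,B}^{-1}(K)\subset\Delta(\X')$ follows at once. The mechanism is that blowing up a stratum $Y=E_I\cap B_J$ with $J\neq\emptyset$ introduces a vertical exceptional divisor $E'$, hence a new vertex $\sigma_{E'}$ at finite distance along the $B$-directions, and the faces on the near side of $\sigma_{E'}$ become bounded, being cut out by vertical divisors only. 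Iterating the blowups of the successive strata $E'\cap\tilde{B}_j$ pushes the strict transform of $B$ outward: as in the one-dimensional computation, each such blowup advances the newly created exceptional vertex by a controlled amount along the $s_j$-axis, so after finitely many steps the bounded subcomplex exhausts $R_N$.

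I expect the main obstacle to be the higher-dimensional combinatorial bookkeeping. An unbounded face is a product of a simplex in the $x_i$ with an orthant $\R^q_{\ge 0}$ in the $y_j$, and truncating all orthant directions simultaneously to $[0,N]^q$ -- including the mixed corners where several $s_j$ are large -- requires a coherent choice of the sequence of stratum-blowups across all faces, together with verification, via the explicit formula for $r_{\X',\X,B}$, that each blowup moves the relevant exceptional vertex in the intended direction and that the intermediate cells stay bounded. This is most transparent in toric language, where the local picture of $\Delta(\X,B)$ near a stratum is a rational polyhedral fan and the stratum-blowups are star subdivisions; refining along the rational hyperplanes $s_j=N$ and resolving produces the desired subdivision, and since $K$ meets only finitely many faces and $R_N$ is bounded, finitely many blowups suffice. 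The remaining checks are routine: each blowup preserves the snc condition, its center being a transverse intersection of snc divisors, and preserves the model structure over $\D^*$.
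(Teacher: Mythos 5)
Your overall strategy (iterate stratum blowups, use the subdivision proposition to keep $r_{\X',\X,B}$ a homeomorphism, and grow the bounded subcomplex until it contains the truncated region $R_N \supset K$) is genuinely different from the paper's, but as written it has a real gap at its central step. The mechanism you describe --- ``the faces on the near side of $\sigma_{E'}$ become bounded, being cut out by vertical divisors only'' --- already fails in relative dimension $2$: take $\X_0 = E_0$ reduced and two boundary divisors $B_1, B_2$ meeting it, so that locally $\Delta(\X,B)$ is $\{x_0=1\}\times\R^2_{\geq 0}$. Blowing up the corner stratum $E_0\cap B_1\cap B_2$ adds the vertex $\sigma_{E'}$ at $(s_1,s_2)=(1,1)$, but the new two-dimensional faces adjacent to it, such as $\sigma_{\tilde E_0\cap E'\cap \tilde B_1}$, still contain a $\tilde B_j$ factor and are therefore unbounded; the bounded subcomplex grows only by the edge from $\sigma_{\tilde E_0}$ to $\sigma_{E'}$, not by any two-dimensional region. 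So boundedness of the ``swallowed'' cells is exactly what needs proof, and your fallback --- ``refining along the rational hyperplanes $s_j=N$ and resolving'' --- quietly assumes that an arbitrary rational polyhedral subdivision of $\Delta(\X,B)$ is realized by an snc model dominating $\X$. That is a toroidal (KKMS-type) correspondence between subdivisions and modifications which is neither proved in the paper nor by you, and the only moves your cited proposition licenses are star subdivisions at the specific points $\frac{1}{\sum_{i}b_i}(1,\dots,1)$ attached to strata. A correct proof along your lines would have to establish the finite-exhaustion claim for these particular subdivisions (plausible, but requiring genuine combinatorial work), or import the toroidal machinery wholesale.

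For comparison, the paper sidesteps all of this with a short valuation-theoretic argument: cover $K$ so that some vertical component $E$ satisfies $\nu(E)\geq\epsilon$ on $K$, induct on the number of components $B_1,\dots,B_q$ of $B$ containing centers of points of $K$, choose $N$ by compactness with $N\nu(E)\geq\nu(B_1)$ on $K$, and blow up the ideal sheaf $\I_E^N+\I_{B_1}$. Every $\nu'$ with $r_{\X',\X,B}(\nu')\in K$ satisfies $\nu'(z^N/w_1)\geq 0$, so its center lands in the chart $\Spec \O_\X(U)[\frac{z^N}{w_1}]$, which misses the strict transform of $B_1$; a resolution of singularities then gives the snc model, and induction removes the remaining $B_j$. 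Note the paper never needs $r_{\X',\X,B}$ to be a homeomorphism --- it only needs centers of the relevant valuations to avoid $B$, which forces the corresponding faces into the bounded subcomplex $\Delta(\X')$. Your two reductions at the start (continuity of the $s_j$, hence $K\subset R_N$, and the fact that every stratum contributing a face of $\Delta(\X,B)$ contains a vertical component, so stratum blowups are isomorphisms over $\D^*$) are correct and would survive in a repaired version of your argument, but the heart of the lemma remains unproven in your proposal.
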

\begin{proof}
  For a valuation $\nu \in X^\an$ and a divisor $D \subset \X$ not contained in $\overline{\{\ker{\nu}\}}$, set $\nu(D) := \nu(f)$, where $f$ defines $D$ in an open neighborhood of the $\red_{\X}(\nu)$.
  
  Since it is enough to prove the result for some neighborhood of all points in $K$, we may assume without loss of generality that there exists an irreducible component $E$ of $\X_0$ and an $\epsilon > 0$ such that $\nu(E) \geq \epsilon$ for all $\nu \in K$. Let $B_1,\dots,B_q$ be the irreducible components of $B$ containing the centers of all $\nu \in K$.   It is enough to show that there exists a smooth proper snc model $\X'$ of $(X,B)$ such that $\red_{\X'}(\nu')$ is not contained in the closures of $B_1,\dots,B_q$ in $\X'$ for all $\nu' \in \Delta(\X,B)$ such that $r_{\X',\X,B}(\nu') \in K$. Note that if $q = 0$, we are done. We will prove the result by induction on $q$.

  Pick $N > 0$ large enough so that $N\nu(E_1) \geq \nu(B_1)$ for all $\nu \in K$.
  Let $\I_{E}$ and $\I_{B_1}$ be the ideal sheaf defining $E$ and $B_1$ respectively. Let $\widetilde{\X}$ be the blowup of $\X$ along the ideal sheaf $\I_{E}^N + \I_{B_1}$. Then, $\widetilde{\X}$ is a proper model of $(X,B)$ although it may not necessarily be smooth. Pick $\nu \in K$ and let $U$ be an affine open neighborhood of $\red_{\X}(\nu)$. If $E$ is defined by $z = 0$ and $B_1$ is defined by $w_1 = 0$ on $U$, then $\widetilde{U} = \Spec \O_{\X}(U)[\frac{z^N}{w_1}]$ is a chart of the blowup.
  Let $\X'$ be a resolution of singularities of $\widetilde{\X}$ such that $\X'$ is a proper snc model for $(X,B)$. Pick $\nu' \in \Delta(\X,B)$ such that $r_{\X',\X,B}(\nu') = \nu$. Then, $\nu'(\frac{z^N}{w_1}) = \nu(\frac{z^N}{w_1}) \geq 0$. Thus, the center of $\nu'$ in $\widetilde{\X}$ is contained in $\widetilde{U}$. But since $\widetilde{U}$ misses the strict transform of $B_1$, and thus the center of $\nu'$ in $\X'$ is not contained in $B_1$.
  Thus, the irreducible components of $B$ containing the centers of any valuations $\nu' \in r_{\X',\X,B}^{-1}(K)$ can only be $B_2,\dots,B_q$, and thus we are done by induction. 	

\end{proof}

\begin{cor}
\label{CenterAvoidsB}
Let $(\nu_{\X})_{\X} \in \varprojlim_{\X} \Delta(\X,B)$. Given a smooth proper snc model $\X$ of $(X,B)$, there exists a smooth proper snc model $\X'$ of $(X,B)$ dominating $(\X,B)$ such that the center of $\nu_{\X'}$ in $\X'$ does not intersect $B$. 
\end{cor}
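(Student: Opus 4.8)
The plan is to reduce everything to Lemma \ref{EverythingLiesInBoundedPiece} applied to a single point, and then to translate "lying in the bounded subcomplex $\Delta(\X')$" into "having center away from $B$". Write the given element as a compatible system $(\nu_{\X})_{\X}$, so that $r_{\X'',\X',B}(\nu_{\X''}) = \nu_{\X'}$ whenever $\X''$ dominates $\X'$. Starting from the fixed model $\X$, I would take $K := \{\nu_{\X}\} \subset \Delta(\X,B)$; this is a single point and hence compact. Lemma \ref{EverythingLiesInBoundedPiece} then furnishes a smooth proper snc model $\X'$ of $(X,B)$ dominating $\X$ such that
$$ r_{\X',\X,B}^{-1}(\{\nu_{\X}\}) \subset \Delta(\X'). $$
(Recall that an snc model is smooth by definition, so $\X'$ is automatically smooth.)

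Next I would invoke compatibility of the system. Since $\X'$ dominates $\X$, the coordinate of $(\nu_{\X})_{\X}$ at $\X'$ satisfies $r_{\X',\X,B}(\nu_{\X'}) = \nu_{\X}$, and therefore
$$ \nu_{\X'} \in r_{\X',\X,B}^{-1}(\{\nu_{\X}\}) \subset \Delta(\X'). $$
Thus the distinguished point $\nu_{\X'} \in \Delta(\X',B)$ in fact lands in the bounded subcomplex $\Delta(\X') = \Delta(\X',0)$.

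It then remains to observe that any point of $\Delta(\X')$ has center avoiding $B$, which is the only non-formal step. The bounded faces of $\Delta(\X',B)$ are exactly those $\sigma_{Y}$ with $Y$ a stratum of the central fiber $\X'_0$ alone, i.e.\ $Y$ locally of the form $E_{i_0} \cap \dots \cap E_{i_p}$ with no $B_j$ occurring (any face involving a $B_j$ carries an unbounded coordinate $y_j \in \R_{\geq 0}$). If $\nu_{\X'}$ lies in the relative interior of such a face $\sigma_{Y}$, then by construction of the retraction $r_{(\X',B)}$ its center $\red_{\X'}(\nu_{\X'})$ is the generic point of the minimal stratum $Y$, and minimality means that every irreducible component of $\X'_0 + B$ through this point appears among $E_{i_0},\dots,E_{i_p}$. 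In particular no component $B_j$ passes through the center, so $\red_{\X'}(\nu_{\X'})$ does not lie on $B$; the same holds if $\nu_{\X'}$ sits on a boundary face of $\Delta(\X')$, since such a face is again a central-fiber stratum. Hence the center of $\nu_{\X'}$ in $\X'$ does not intersect $B$, which is exactly the assertion. The main point to get right is this last identification, that membership in $\Delta(\X')$ is equivalent to the center avoiding $B$; once the correct notion of minimal stratum is used it is built into the definition of the skeleton, and the rest is a direct application of Lemma \ref{EverythingLiesInBoundedPiece} together with compatibility of the inverse system.
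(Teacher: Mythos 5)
Your reduction is set up correctly and matches the first half of the paper's argument: applying Lemma \ref{EverythingLiesInBoundedPiece} to the compact singleton $K = \{\nu_{\X}\}$ and using compatibility of the inverse system ($r_{\X',\X,B}(\nu_{\X'}) = \nu_{\X}$) does give $\nu_{\X'} \in \Delta(\X')$. The gap is in your final step, where you assert that membership in $\Delta(\X')$ is \emph{equivalent} to the assertion of the corollary. What membership in $\Delta(\X')$ actually gives is that the center \emph{point} $\red_{\X'}(\nu_{\X'})$ --- the generic point of a stratum $Y$ of $\X'_0$ alone --- does not lie on $B$; equivalently, the closure $\overline{\{\red_{\X'}(\nu_{\X'})\}} = Y$ is not \emph{contained} in the closure of $B$. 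But the corollary claims the center does not \emph{intersect} $B$, i.e.\ that the closed subvariety $Y$ is disjoint from the closure of $B$ in $\X'$, which is strictly stronger. Concretely, in Example \ref{P1DualComplex} the divisorial valuation $\ord_{\P^1 \times \{0\}}$ is the vertex of the bounded subcomplex $\Delta(\X) \subset \Delta(\X,B)$, its center is the generic point of the central fiber (so it avoids $B$ as a point), yet the closure of the center is all of $\P^1 \times \{0\}$, which meets $B$ at $(0,0)$ and $(\infty,0)$. So "lands in $\Delta(\X')$" does not imply "center disjoint from $B$", and your claimed equivalence fails.

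This is not a pedantic distinction: the subvariety-level disjointness is exactly what the corollary's applications use. In the proof of Theorem \ref{LimitSkeletonIsNonArchimedean}, the contradiction comes from the center of $\nu$ being contained both in the center of $\nu_{\X}$ and in the closure of $B_1$; if the center of $\nu_{\X}$ merely failed to be contained in $B$ but still met $\overline{B_1}$, there would be no contradiction. Accordingly, the paper's proof has a second step that your proposal omits: after Lemma \ref{EverythingLiesInBoundedPiece} produces a model in which $\red_{\X'}(\nu_{\X'})$ is not contained in the closure of $B$, one blows up further to make the (closure of the) center and the strict transform of $B$ disjoint --- in the $\P^1 \times \D$ picture, blowing up $(0,0)$ and $(\infty,0)$ separates the strict transform of the central fiber from $B$. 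As written, your argument proves only the weaker point-level statement $\red_{\X'}(\nu_{\X'}) \notin \overline{B}$, and needs this additional blowup step to yield the corollary.
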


\begin{proof}
This easily follows Lemma \ref{EverythingLiesInBoundedPiece}. Once we find a model $\X'$ of $\X$ such that $\red_{\X'}(\nu_{\X'})$ is not contained in the closure of $B$, we can further blowup to assume that the two become disjoint. 
 \end{proof}

\begin{prop}
	\label{PhiIsInjective}
	The map $\phi : \varprojlim_{\X} \Delta(\X,B) \to \varprojlim_{\X} \Delta(\X)$ is open and injective, where $\X$ ranges over all proper snc models $\X$ of $(X,B)$.
\end{prop}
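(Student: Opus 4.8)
The plan is to leverage the already-known homeomorphism $X^\an \simeq \varprojlim_\X \Delta(\X)$ of \cite{KS}, \cite{BFJ16} and to reduce both openness and injectivity to formal consequences of the compatibility identities recorded in Proposition \ref{MapPhiExists} together with Corollary \ref{CenterAvoidsB}. Write $\mathcal{S} := \varprojlim_\X \Delta(\X,B)$ and $\mathcal{S}_0 := \varprojlim_\X \Delta(\X)$, both limits taken over proper snc models of $(X,B)$. Since every proper snc model of $X$ is dominated by a proper snc model of $(X,B)$ (blow up until $\X_0 + \overline{B}$ is snc), the models of $(X,B)$ are cofinal among those of $X$, so the retractions $r_\X$ assemble to a homeomorphism $\tilde{r} : X^\an \to \mathcal{S}_0$. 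The continuous retractions $r_{(\X,B)} : X^\an \setminus B^\an \to \Delta(\X,B)$ are compatible with the transition maps $r_{\X',\X,B}$, hence assemble to a continuous map $r : X^\an \setminus B^\an \to \mathcal{S}$, and the identity $\phi_\X \circ r_{(\X,B)} = r_\X$ from the proof of Proposition \ref{MapPhiExists} gives, in the limit,
\[ \phi \circ r = \tilde{r} \circ \iota, \]
where $\iota : X^\an \setminus B^\an \hookrightarrow X^\an$ is the inclusion.

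For injectivity, suppose $P = (\nu_\X)_\X$ and $Q = (\mu_\X)_\X$ in $\mathcal{S}$ satisfy $\phi(P) = \phi(Q)$, and fix a model $\X$. Applying Corollary \ref{CenterAvoidsB} twice and passing to a common dominating model $\X'$, I may arrange that the centers of both $\nu_{\X'}$ and $\mu_{\X'}$ avoid $B$. Since the center of a point of $\Delta(\X',B)$ avoids $B$ precisely when that point lies in the subcomplex $\Delta(\X') \subset \Delta(\X',B)$, I obtain $\nu_{\X'}, \mu_{\X'} \in \Delta(\X')$, which is exactly the locus on which $\phi_{\X'}$ restricts to the identity. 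Hence $\nu_{\X'} = \phi_{\X'}(\nu_{\X'}) = \phi_{\X'}(\mu_{\X'}) = \mu_{\X'}$, and applying $r_{\X',\X,B}$ yields $\nu_\X = \mu_\X$; as $\X$ was arbitrary, $P = Q$. (Here one checks that ``center avoids $B$'' is preserved under domination, which is immediate from functoriality of the center map and the fact that $\overline{B}$ on a dominating model is the strict transform of $\overline{B}$.)

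For openness, set $\Phi := \tilde{r}^{-1} \circ \phi : \mathcal{S} \to X^\an$; it suffices to prove $\Phi$ is a homeomorphism onto the \emph{open} subset $X^\an \setminus B^\an$, for then $\phi = \tilde{r} \circ \Phi$ is open and injective. I would first identify $\Phi$ explicitly: given $P = (\nu_\X)_\X$, choose via Corollary \ref{CenterAvoidsB} a model $\X'$ with $\nu_{\X'} \in \Delta(\X')$; using the commuting square of Proposition \ref{MapPhiExists} together with $r_\X = r_{\X',\X} \circ r_{\X'}$ and $r_{\X'} \circ i_{\X'} = \mathrm{id}$, one verifies $\tilde{r}(i_{\X'}(\nu_{\X'})) = \phi(P)$, whence $\Phi(P) = i_{\X'}(\nu_{\X'})$. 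As $\nu_{\X'} \in \Delta(\X')$ the embeddings $i_{\X'}$ and $i_{(\X',B)}$ agree there, so $\Phi(P) = i_{(\X',B)}(\nu_{\X'}) \in X^\an \setminus B^\an$, giving $\Phi(\mathcal{S}) \subseteq X^\an \setminus B^\an$. On the other hand $\phi \circ r = \tilde{r} \circ \iota$ yields $\Phi \circ r = \mathrm{id}_{X^\an \setminus B^\an}$, so $\Phi$ is surjective onto $X^\an \setminus B^\an$ with continuous section $r$; combined with the injectivity of $\Phi$ (inherited from $\phi$) this forces $r \circ \Phi = \mathrm{id}_{\mathcal{S}}$, exhibiting $\Phi$ and $r$ as mutually inverse homeomorphisms. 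Since $B^\an$ is closed in $X^\an$, its complement is open, and $\phi$ is therefore open.

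The main obstacle is the inclusion $\Phi(\mathcal{S}) \subseteq X^\an \setminus B^\an$, i.e.\ showing the image of $\phi$ genuinely avoids $B^\an$: this is precisely what Corollary \ref{CenterAvoidsB} is designed to supply, since without it one cannot exclude that a compatible system of faces limits onto a valuation supported on $B$, and it is exactly what upgrades ``continuous and injective'' to ``open.'' The remaining ingredients—cofinality of the $(X,B)$-models, the compatibility $r_\X = r_{\X',\X} \circ r_{\X'}$, and the agreement $i_{\X'} = i_{(\X',B)}$ on $\Delta(\X')$—are routine verifications from the explicit descriptions of the retraction and inclusion maps recorded above.
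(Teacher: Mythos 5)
Your injectivity argument is correct and is essentially the paper's own proof run in the contrapositive direction: the paper, too, uses Corollary \ref{CenterAvoidsB} to pass to a common model $\X'$ on which both coordinates lie in $\Delta(\X')$, hence are fixed by $\phi_{\X'}$, and the persistence of ``center avoids $B$'' under domination that you gloss in parentheses does hold by the argument you indicate.

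The openness half, however, contains a genuine false step: the explicit identification $\Phi(P) = i_{(\X',B)}(\nu_{\X'})$. If it were true, every point in the image of $\Phi$ would be a quasi-monomial (skeletal) valuation, which contradicts your own, correct, identity $\Phi \circ r = \mathrm{id}_{X^\an \setminus B^\an}$: apply it to a type~1 point $v \notin B^\an$ to get $\Phi(r(v)) = v$, which lies on no $\Sk(\X',B)$. The underlying error is that the single coordinate $\nu_{\X'}$ of $P = (\nu_\X)_\X$ does not determine the higher coordinates: the relations $r_{\Y,\X'}(\phi_\Y(\nu_\Y)) = \phi_{\X'}(\nu_{\X'}) = \nu_{\X'}$ only place $\phi_\Y(\nu_\Y)$ in the fiber of $r_{\Y,\X'}$ over $\nu_{\X'}$, and the skeletal system $\left(r_\Y(i_{(\X',B)}(\nu_{\X'}))\right)_\Y$ is just one point of that fiber. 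Concretely, in relative dimension $2$ let $Y = E_0 \cap E_1$ be a curve in $\X'_0$ with $t = z_0 z_1$ and let $\nu_{\X'} = (r_0,r_1) \in \sigma_Y^{\circ} \subset \Delta(\X')$; take $w$ the monomial valuation in $(z_0, z_1, y-c)$ with weights $(r_0, r_1, s)$, $0 < s < \min(r_0,r_1)$, and $P = r(w)$, so that the $\X'$-coordinate of $P$ is $\nu_{\X'}$. On the blowup $\Y$ of $\X'$ at the point $y = c$ of $Y$, with exceptional divisor $E''$ (of multiplicity $2$ in $\Y_0$), one computes $\phi_\Y(\nu_\Y) = (r_0 - s,\, r_1 - s,\, s)$ on the face of $\tilde{E}_0 \cap \tilde{E}_1 \cap E''$, whereas $r_\Y(i_{(\X',B)}(\nu_{\X'})) = (r_0, r_1, 0)$; so $\tilde{r}(i_{(\X',B)}(\nu_{\X'})) \neq \phi(P)$, and indeed $\Phi(P) = w$, not $i_{(\X',B)}(\nu_{\X'})$.

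The inclusion $\Phi(\mathcal{S}) \subseteq X^\an \setminus B^\an$ you need is nevertheless true, and your own tools prove it if you argue with centers instead of the false formula: if $\Phi(P) \in B^\an$, choose $\X'$ by Corollary \ref{CenterAvoidsB} so that the closure of the center of $\nu_{\X'}$ is disjoint from $B$; then $r_{\X'}(\Phi(P)) = \phi_{\X'}(\nu_{\X'}) = \nu_{\X'}$ forces the center of $\Phi(P)$ in $\X'$ to lie in that closure, while the center of any point of $B^\an$ lies in $\overline{B}$ --- a contradiction. (This is exactly the paper's proof of Theorem \ref{LimitSkeletonIsNonArchimedean}.) With that repair your plan does go through and is genuinely different from the paper's: you in effect prove the homeomorphism $X^\an \setminus B^\an \simeq \varprojlim_\X \Delta(\X,B)$ inside this proposition and deduce openness from it, whereas the paper proves openness directly --- taking a basic open $U \subset \Delta(\Y,B)$ with compact closure, invoking Lemma \ref{EverythingLiesInBoundedPiece} to find $\Y'$ with $r_{\Y',\Y,B}^{-1}(U) \subset \Delta(\Y')$, and identifying the image of the basic open with a basic open of $\varprojlim_\X \Delta(\X)$ --- and only afterwards assembles Theorem \ref{LimitSkeletonIsNonArchimedean} from it. Your reorganization is legitimate and arguably more conceptual, but only once the image statement is established correctly; as written, the verification step it rests on is false.
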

\begin{proof}
	Pick $(\nu_\X)_{\X}, (\nu'_\X)_{\X}$ be two distinct elements in $\varprojlim_{\X} \Delta(\X,B)$. Let $\X$ be a proper snc model of $(X,B)$ such that $\nu_{\X} \neq \nu_{\X}'$ in $\Delta(\X,B)$. From Corollary \ref{CenterAvoidsB}, we can find a model $\Delta(\X'B)$ such that $\phi_{\X'}(\nu_{\X'}) = \nu_{\X'}$ and $\phi_{\X'}(\nu'_{\X'}) = \nu'_{\X'}$. Note that $\nu_{\X'} \neq \nu'_{\X'}$ as $r_{\X',\X,B}(\nu_{\X'}) \neq r_{\X',\X,B}(\nu'_{\X'})$. Thus, $\phi$ is injective.
	
	To see that $\phi$ is open, it is enough to show that $$\phi(\{(\nu_{\X})_{\X} \in \varprojlim_{\X} \Delta(\X,B) | \nu_{\Y} \in U \})$$ is open for a model $\Y$ of $(X,B)$ and an open set $U \subset \Delta(\Y,B)$. We may also assume that $U$ has compact closure. Using Lemma \ref{CenterAvoidsB}, we can find a model $\Y'$ such that  $U' := r_{\Y',\Y,B}^{-1}(U) \subset \Delta(\Y')$. Then, it is easy to check that $$\phi(\{(\nu_{\X})_{\X} \in \varprojlim_{\X} \Delta(\X,B) | \nu_{\Y} \in U \}) = \{ (\nu_{\X})_{\X} \in \varprojlim_{\X}\Delta(\X) | \nu_{\Y'} \in U' \}$$

\end{proof}

To prove Theorem \ref{LimitSkeletonIsNonArchimedean}, we exploit the isomorphism $X^\an \xrightarrow{\simeq} \varprojlim_{\X} \Delta(\X)$ (see \cite[Theorem 10]{KS}, \cite[Cor. 3.2]{BFJ16}). 
\begin{remark}
	 The homeomorphism $X^\an \xrightarrow{\simeq} \varprojlim_{\X} \Delta(\X)$ in \cite[Theorem 10]{KS} is stated when the inverse limit runs over all smooth proper models $\X$ of $X$ such that the central fiber $\X_0$ is an snc divisor. However, we may as well take the inverse limit over all smooth proper models $\X$ such that $\X_0 + B$ is an snc divisor, because such models form a cofinal system.
\end{remark}

\begin{proof}[Proof of Theorem \ref{LimitSkeletonIsNonArchimedean}]
We obtain a map $r : X^\an \setminus B^\an \to \varprojlim_{\X} \Delta(\X,B)$ by considering the inverse limit over the retraction map $r_{(\X,B)} : X^\an \setminus B^\an \to \Delta(\X,B)$. 

Observe that we have the commutative diagram where the bottom map is a homeomorphism.
$$
\begin{tikzcd}
X^\an \setminus B^\an \ar[r,"r_{(X,B)}"] \ar[d, "i", hook] & \varprojlim_{\X} \Delta(\X,B) \ar[d,hook, "\phi"] \\
X^\an \ar[r, "r_X", "\simeq"'] & \varprojlim_{\X} \Delta(\X)
\end{tikzcd}
$$

Therefore, it is enough to show that the image of $B^\an$ in $\varprojlim_{\X} \Delta(\X)$ does not intersect with the image of $\phi$. Let $(\nu_{\X})_{\X}$ be an element of $\varprojlim_{\X} \Delta(\X,B)$ and let $\nu := r_{X}^{-1}{(\phi((\nu_\X)_\X))}$. Without loss of generality, assume to the contrary that $\nu \in B_1^\an$.

Using Corollary \ref{CenterAvoidsB}, we can find a model $\X$ such that the center of $\nu_\X$ in $\X$ does not intersect $B$. Then, $\phi_{\X}(\nu_\X) = \nu_\X$. We also have that $r_\X(\nu) = \phi_\X(\nu_\X) = \nu_\X$ and the center of $\nu$ in $\X$ is contained in the center of $\nu_\X$ in $\X$. But the center of $\nu$ is contained in the closure of $B_1$, which is a contradiction.
\end{proof}

\subsection{The limit hybrid space as an analytic space}
In this section, for any $0 < r < 1$, we realize $(X,B)^\hyb_r := (X,B)^\hyb|_{\overline{r\D}}$ as the analytification of a scheme over a Banach ring, $A_r$. 

As in \cite{Ber09}, consider the Banach ring $$A_r = \{ \sum_{i \in \mathbb{Z}} c_i t^i | c_i \in \C \text{ and }\sum_{i \in \mathbb{Z}}||c_i||_\hyb r^i < \infty \},$$ where $||c_i||_{\hyb} = \max\{|c_i|, 1\}$ if $c_i \neq 0$ and $||0||_\hyb = 0$. Then, its Berkovich spectrum $\M(A_r)$ is homeomorphic to $r\overline{\D}$. For more details, see \cite{Ber09} \cite[Appendix 1]{BJ}.
Note that any function that is holomorphic in open neighborhood of $r\overline{\D} \setminus \{0\}$ and meromorphic at 0 gives an element of $A_r$. 

Given a projective family $X \to \D^*$, we can think of $X$ as a finite scheme over $\Spec A_r$ because the coefficients of the homogeneous equations cutting out $X$ in $\P^N \times \D^*$ can be viewed as elements of $A_r$. We denote this scheme as $X_{A_r}$. Similarly, we get $B_{A_r} \subset X_{A_r}$. Let $(\_)^{\An}$ denote the Berkovich analytification functor on the category of finite type schemes over $\Spec A_r$. The map $X_{A_r} \setminus B_{A_r} \to \Spec A_r$ gives rise to the canonical map $X_{A_r}^{\An} \setminus B_{A_r}^\An \to \M(A_r) \simeq r\overline{\D}$. The following proposition tells us how this analytic space is related to $(X,B)^\hyb$. 

\begin{prop}
  \label{PropHybridSpaceIsAnalytification}
	We have a homeomorphism $X_{A_r}^\An \setminus B_{A_r}^\An \xrightarrow{\simeq} (X,B)^\hyb_r$ as spaces over $r\overline{\D}$. 
\end{prop}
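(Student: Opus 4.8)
The plan is to identify both sides fiberwise over $\M(A_r)\simeq r\overline{\D}$ and then match the two topologies, reducing the topological comparison to the boundary-free situation already understood in \cite[Appendix 1]{BJ} and \cite{Ber09}. First I would recall the structure of the Berkovich spectrum: a point of $\M(A_r)$ lying over $t\in r\overline{\D}^*$ is the archimedean seminorm whose completed residue field is $\C$, while the point over $0$ is the $t$-adic valuation, whose completed residue field is $\Ct$. Since the analytification of a finite type scheme over a Banach ring is compatible with base change to the completed residue field of a point of the spectrum, the fiber of $X_{A_r}^{\An}$ over $t\neq 0$ is the complex analytic space $X_t$ and the fiber over $0$ is $X_{\Ct}^{\an}$; likewise for $B_{A_r}^{\An}$. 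Removing $B_{A_r}^{\An}$, the fibers of $X_{A_r}^{\An}\setminus B_{A_r}^{\An}$ are $X_t\setminus B_t$ for $t\neq 0$ and $X_{\Ct}^{\an}\setminus B_{\Ct}^{\an}$ over $0$. By Theorem \ref{LimitSkeletonIsNonArchimedean} the latter is exactly the central fiber $\varprojlim_{\X}\Delta(\X,B)$ of $(X,B)^\hyb$, so the two spaces agree set-theoretically and compatibly with the projection to $r\overline{\D}$.

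The substance is that these identifications assemble into a homeomorphism. I would first establish the boundary-free statement $X_{A_r}^{\An}\simeq \X^\hyb_r$ in the limit form, where $X^\hyb=\varprojlim_{\X}\X^\hyb$ has central fiber $X_{\Ct}^{\an}\simeq\varprojlim_{\X}\Delta(\X)$; this is the content of \cite[Appendix 1]{BJ} adapted to our projective meromorphic family, with the archimedean fibers $X_t$ sitting inside $X_{A_r}^{\An}$ with their usual topology and degenerating to the central fiber at the logarithmic rate that governs the hybrid topology. Applying the same statement to the closed subfamily $B$, which is itself projective and meromorphic over $\D^*$, and using functoriality of the analytification under the closed immersion $B\hookrightarrow X$, the homeomorphism carries $B_{A_r}^{\An}$ onto the closed subset $B^\hyb_r\subset X^\hyb_r$. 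Restricting to complements then yields $X_{A_r}^{\An}\setminus B_{A_r}^{\An}\simeq X^\hyb_r\setminus B^\hyb_r$. Finally I would identify $X^\hyb_r\setminus B^\hyb_r$ with $(X,B)^\hyb_r$: the two have the same fibers by the previous paragraph, and the topologies match because $(X,B)^\hyb=\varprojlim_{\X}(\X,B)^\hyb$ carries the subspace topology induced from $X^\hyb=\varprojlim_{\X}\X^\hyb$ through the open injection $\phi$ of Proposition \ref{PhiIsInjective}.

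The main obstacle is the topological comparison near the central fiber, which is precisely where the logarithmic rate of convergence enters. Concretely, on a chart $(U,(z,w,y))$ adapted to a stratum $Y$, one must show that a net of archimedean points degenerating to the central fiber converges in the Berkovich topology of $X_{A_r}^{\An}$ if and only if the coordinates $\log|z_i|/\log|t|$ and $\log|w_j|/\log|t|$ converge, i.e.\ if and only if it converges in $X^\hyb$. This amounts to checking that the Berkovich seminorm of each coordinate function, evaluated along the net, converges to the $t$-adic value prescribed by the retraction $r_{(\X,B)}$ of Section \ref{skeleton}; the estimate is local and is the analogue, with the boundary coordinates $w_j$ carried along, of the computation in \cite{BJ}. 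Once this local statement is in place, patching over a finite cover of the central fiber and invoking the properness of $\Log_{V^\hyb}$ near the central fiber upgrades the fiberwise bijection to a homeomorphism over $r\overline{\D}$.
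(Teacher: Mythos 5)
Your proposal is correct and takes essentially the same route as the paper: identify the fibers of $X_{A_r}^\An \setminus B_{A_r}^\An$ over $\M(A_r) \simeq r\overline{\D}$ (the paper cites \cite[Lemma A.6]{BJ} for this), invoke the boundary-free homeomorphism $X_{A_r}^\An \simeq X^\hyb_r$ of \cite[Prop.~4.12]{BJ}, and transfer it to the pair via the embedding $(X,B)^\hyb \hookrightarrow X^\hyb$ coming from Propositions \ref{MapPhiExists} and \ref{PhiIsInjective}. The paper packages this last step as a commutative square whose bottom arrow is the homeomorphism and whose right arrow is the embedding, which is exactly your restriction-to-complements argument, so your concluding local net computation only re-derives what is already being cited from \cite{BJ}.
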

\begin{proof}
	Let $\pi_r : (X_{A_r} \setminus B_{A_r}) \to r\overline{\D} \simeq \M(A_r)$ be the canonical projection map. From \cite[Lemma A.6]{BJ} we have the following homeomorphisms: 
	$$ \pi_r^{-1}(r\overline{\D}^*) \simeq (X \setminus B)|_{r\overline{\D}^*} \text{ and } \pi_r^{-1}(0) \simeq (X_{\Ct}^\an \setminus B_{\Ct}^\an).$$
	Moreover, the first homeomorphism is compatible with the projections to $r\overline{\D}^*$.  
	
	The above homeomorphisms let us define a bijection $X_{A_r}^\An \setminus B_{A_r}^\An \to (X,B)^\hyb_r$. It remains to check that this map is continuous. To do this, first note that we have an embedding $(X, B)^\hyb \hookrightarrow X^\hyb $, where  $X^\hyb := \varprojlim_{\X} \X^\hyb$, given by the canonical inclusion over $\D^*$ and by Proposition \ref{MapPhiExists} over the central fiber. We also have a homeomorphism $X_{A_r}^\An \to X^\hyb_r$ as topological spaces over $r\overline{\D}$ \cite[Prop. 4.12]{BJ}. It is straightforward to check that the following diagram of topological spaces over $r\overline{\D}$ commutes. 	
	$$
	\begin{tikzcd}
	X_{A_r}^\An \setminus B_{A_r}^\An \ar[r] \ar[d, hook] &  (X,B)^\hyb_r  \ar[d, hook] \\
	X_{A_r}^\An \ar[r, "\simeq"] & X^\hyb_r
	\end{tikzcd}
	$$
	
	Since the map at the bottom is a homeomorphism and the top map is a bijection, the top map is also a homeomorphism. 
\end{proof}

Now, we can define the hybrid space associated to a (not necessarily smooth) pair $(X,B)$ over $\D^*$ as $(X,B)^\hyb  := X_{A_r}^\An \setminus B^\An_{A_r}$. Proposition \ref{PropHybridSpaceIsAnalytification} tells us that this matches with the previous definition. 

\subsection{Convergence on limit hybrid model}
The convergence described in Theorem \ref{ConvergenceTheorem} depends on the choice of a model $(\X,B)$ of $(X,B)$. We would like to remedy this by describing the convergence on $(X,B)^\hyb$, which does not require choosing a model. Given two models $(\X,B)$ and $(\X',B)$ of $(X,B)$ with $(\X',B)$ dominating $(\X,B)$ via $\rho : \X' \to \X$, a line bundle $\L$ on $\X$ extending $K_{X/\D^*} + B$, a generating section $\psi$ extending $\eta$ and a proper, we can get a line bundle $\L' = \rho^*\L$ on $\X'$ extending $K_{X/\D^*} + B$ and a section $\psi'  = \rho^* \psi$ extending $\eta$. Applying Theorem $\ref{ConvergenceTheorem}$ to both $\X$ and $\X'$, we get measures $\mu_0^{\X}$ and $\mu_0^{\X'}$ on $\Delta(\X,B)$ and $\Delta(\X',B)$ respectively. It follows from Lemma \ref{MainLemmaConvergenceThorem} and Equation \eqref{LogWellBehavedWithRetraction} that $\mu_0^{\X}$ is just the push-forward of the measure $\mu_0^{\X'}$ under the map $r_{\X',\X,B}$. 

Thus, we get a compatible system of measure $\mu_0^{\X'}$ on all models $(\X',B)$ dominating a fixed model $(\X,B)$. This gives rise to a measure on $\mu_0$ on $(X,B)_0^\hyb$, and thus we get the following convergence theorem.

\begin{thm}
  Let $(X,B)$ be a log smooth pair over $\D^*$ such that $K_{X/\D^*}+B \sim_{\Q} 0$ and let $\eta \in H^0(X,m(K_{X/\D^*} + B))$ have an analytic singularity at $t = 0$ (i.e.~there exists a model $\X$ of $(X,B)$, a line bundle $\L$ extending $K_{\X/\D^*}+B$ and $\psi \in H^0(\X,m\L)$ extending $\eta$). Then, there exists $\kappa_{\min} \in \Q$ and $d \in \N$ such that the measure $\mu_t = \frac{i \eta_t \wedge \overline{\eta_t}}{|t|^{2\kappa_{\min}}(2\pi\log|t|^{-1})^d}$ converges weakly to a measure $\mu_0$ on $(X,B)^\hyb$.

  Moreover if we fix a model $\X$, a line bundle $\L$ and a section $\psi \in H^0(\X,m\L)$ extending $\eta$, then $\mu_0$ is supported on $\Delta(\L) \subset \Delta(\X,B) \subset X^\an \setminus B^\an$, and $d, \kappa_\min$ and $\mu_0$ has the same description as in Section \ref{TheConvergenceTheoremSection}. 
\end{thm}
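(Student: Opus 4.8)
The strategy is to promote the model-dependent convergence of Theorem~\ref{ConvergenceTheorem} to convergence on $(X,B)^\hyb=\varprojlim_{\X}(\X,B)^\hyb$, using only that theorem applied to one model together with the uniform mass bound of Lemma~\ref{CompactSetFiniteVolume}. Fix the data $(\X,\L,\psi)$ of the statement, which determine $\kappa_{\min}$, $d$ and the normalization, and work throughout with the cofinal family of proper snc models $\X'$ dominating $\X$, each equipped with the pulled-back line bundle and section. On this family the constants $\kappa_{\min}$ and $d$ are unchanged and the measures $\mu_0^{\X'}$ form a compatible system (the paragraph preceding the statement), defining $\mu_0$ on $(X,B)^\hyb_0=\varprojlim_{\X'}\Delta(\X',B)$ and characterized by $(p_{\X'})_*\mu_0=\mu_0^{\X'}$, where $p_{\X'}\colon(X,B)^\hyb\to(\X',B)^\hyb$ is the canonical projection: the identity over $\D^*$, and on central fibers the retraction $r_{(\X',B)}$ under the identification $(X,B)^\hyb_0\simeq X_\Ct^\an\setminus B_\Ct^\an$ of Theorem~\ref{LimitSkeletonIsNonArchimedean}. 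In particular $\mu_0$ is supported on the central fiber.

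First I would transfer the mass bound to the limit space: for compact $L\subset(X,B)^\hyb$ and any model $\X'$, the set $p_{\X'}(L)$ is compact and, since $p_{\X'}$ is the identity on each $X_t$, one has $L\cap X_t\subseteq p_{\X'}(L)\cap X_t$, whence $\limsup_{t\to0}\int_{X_t\cap L}d\mu_t\le\limsup_{t\to0}\int_{X_t\cap p_{\X'}(L)}d\mu_t<\infty$ by Lemma~\ref{CompactSetFiniteVolume}. Next I would settle the theorem for functions pulled back from a single model: if $g\in C_c((\X',B)^\hyb)$ then $f=g\circ p_{\X'}$ satisfies $f|_{X_t}=g|_{X_t}$, so $\int_{X_t}f\,d\mu_t=\int_{X_t}g\,d\mu_t\to\int g\,d\mu_0^{\X'}$ by Theorem~\ref{ConvergenceTheorem}, and $\int g\,d\mu_0^{\X'}=\int g\circ p_{\X'}\,d\mu_0=\int f\,d\mu_0$ because $(p_{\X'})_*\mu_0=\mu_0^{\X'}$ and $\mu_0$ lives on the central fiber.

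The remaining step, reducing an arbitrary $f\in C_c((X,B)^\hyb)$ to this case, mirrors the proof of Theorem~\ref{ConvergenceTheorem}. The restriction $f_0:=f|_{(X,B)^\hyb_0}$ is continuous with compact support on $\varprojlim_{\X'}\Delta(\X',B)\simeq X_\Ct^\an\setminus B_\Ct^\an$. Since the skeletal retractions $r_{(\X',B)}\colon X_\Ct^\an\setminus B_\Ct^\an\to\Delta(\X',B)$ separate points (Theorem~\ref{LimitSkeletonIsNonArchimedean}), the pullbacks $g_0\circ r_{(\X',B)}$ form a point-separating subalgebra containing the constants, so by Stone--Weierstrass, for each $\epsilon>0$ there is a model $\X'$ and $g_0\in C_c(\Delta(\X',B))$ with $|f_0-g_0\circ r_{(\X',B)}|<\epsilon$ on the support of $f_0$. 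Extending $g_0$ to $h\in C_c((\X',B)^\hyb)$ with $h|_{\Delta(\X',B)}=g_0$ (via a global $\Log$ function and a cutoff in $\D$, as in Section~\ref{HybridSpaceSection}), the function $f-h\circ p_{\X'}$ is within $\epsilon$ of $0$ on the central fiber, so by continuity and compactness of support there is $t_0$ with $|f-h\circ p_{\X'}|\le2\epsilon$ on $\pi^{-1}(t_0\overline{\D})$ and
\[
\limsup_{t\to0}\Big|\int_{X_t}(f-h\circ p_{\X'})\,d\mu_t\Big|\le 2\epsilon\,\limsup_{t\to0}\int_{X_t\cap L}d\mu_t=O(\epsilon),
\]
where $L$ is a fixed compact neighborhood of the relevant supports. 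Meanwhile $\int_{X_t}h\circ p_{\X'}\,d\mu_t\to\int h\circ p_{\X'}\,d\mu_0$ by the pulled-back case, and $|\int(h\circ p_{\X'}-f)\,d\mu_0|\le\epsilon\,\mu_0(L)=O(\epsilon)$ by local finiteness of $\mu_0$. Combining these three estimates and letting $\epsilon\to0$ gives $\int_{X_t}f\,d\mu_t\to\int f\,d\mu_0$.

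I expect the main obstacle to be this density step: making Stone--Weierstrass interact cleanly with compact supports on the non-compact space $(X,B)^\hyb$, which requires knowing that $p_{\X'}$ is proper near the central fiber (the analogue of the properness statement for $\Log_{V^\hyb}$ in Section~\ref{HybridSpaceSection}) so that $h\circ p_{\X'}$ is again compactly supported and the extension $h$ can be chosen with controlled support. The ``Moreover'' claim is then immediate: fixing the model $\X$, the measure $\mu_0$ is the image under $i_{(\X,B)}$ of the explicit $\mu_0^{\X}$ of Section~\ref{TheConvergenceTheoremSection}, hence supported on $\Delta(\L)\subset\Delta(\X,B)\subset X^\an\setminus B^\an$, with $d$ and $\kappa_{\min}$ as described there; their independence of the chosen dominating model is precisely the compatibility $\mu_0^{\X}=(r_{\X',\X,B})_*\mu_0^{\X'}$ established before the statement.
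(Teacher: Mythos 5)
Your overall strategy coincides with the paper's: the theorem is deduced from Theorem \ref{ConvergenceTheorem} applied to the cofinal family of models dominating the fixed $\X$ (with pulled-back $\L$ and $\psi$, so $\kappa_{\min}$ and $d$ are unchanged), together with the compatibility $(r_{\X',\X,B})_*\mu_0^{\X'}=\mu_0^{\X}$ coming from Lemma \ref{MainLemmaConvergenceThorem} and Equation \eqref{LogWellBehavedWithRetraction}; the paper then treats the passage to the inverse limit $(X,B)^\hyb=\varprojlim_{\X'}(\X',B)^\hyb$ as immediate and does not spell out a test-function argument. Your steps filling this in are the right ones: the transfer of the mass bound of Lemma \ref{CompactSetFiniteVolume} via $L\cap X_t\subseteq p_{\X'}(L)\cap X_t$ is correct, the case of functions pulled back from one model is correct, and your reading of the ``Moreover'' part matches the paper.

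There is, however, a concrete slip in the density step as you wrote it. The pullbacks $g_0\circ r_{(\X',B)}$ with $g_0\in C_c(\Delta(\X',B))$ do \emph{not} contain the constants, since $X_\Ct^\an\setminus B_\Ct^\an$ is not compact; and applying Stone--Weierstrass only on the compact set $\supp(f_0)$ gives $|f_0-g_0\circ r_{(\X',B)}|<\epsilon$ \emph{there only}, with no control on $g_0\circ r_{(\X',B)}$ off $\supp(f_0)$, where $f$ vanishes. Consequently your next assertion --- that $f-h\circ p_{\X'}$ is within $\epsilon$ of $0$ on the whole central fiber, which is exactly what the compactness argument on $\pi^{-1}(t_0\overline{\D})$ requires --- does not follow as stated. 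The standard repair is to use the locally compact (vanishing-at-infinity) form of Stone--Weierstrass: after passing to common refinements, the pullbacks form a point-separating, nowhere-vanishing subalgebra of $C_0(X_\Ct^\an\setminus B_\Ct^\an)$, hence are sup-norm dense there, and $f_0\in C_c\subset C_0$ can be approximated \emph{globally}. This in turn requires exactly the properness you flag but do not prove: one must check that $r_{(\X',B)}$ is proper, so that $g_0\circ r_{(\X',B)}\in C_0$ and $p_{\X'}$ is proper near the central fiber. This is true and provable with the tools at hand: the coordinates of $r_{(\X',B)}(\nu)$ are the values $\nu(E_i),\nu(B_j)$, which extend to continuous $[0,+\infty]$-valued functions on $X_\Ct^\an$ taking the value $+\infty$ on $B^\an_\Ct$; hence for compact $K\subset\Delta(\X',B)$ with $\nu(B_j)\le N$ on $K$, the preimage $r_{(\X',B)}^{-1}(K)$ is a closed subset of the compact set $\{\nu\in X_\Ct^\an \mid \nu(B_j)\le N \text{ for all } j\}\subset X_\Ct^\an\setminus B_\Ct^\an$ (compare Lemma \ref{EverythingLiesInBoundedPiece} and the properness proposition for $\Log_{V^\hyb}$). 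With these two points fixed --- the $C_0$ form of Stone--Weierstrass and the properness of the retractions --- your argument is complete, and it supplies precisely the detail that the paper's one-line deduction from the compatible system of measures leaves implicit.
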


\begin{eg}
  Following up Example \ref{ExampleConvergenceHaarP1}, we see that the Haar
  measures on $\P^1$ converges to the Lebesgue measure on $\R$, which can be thought of as the unique line joining the type 1 points corresponding $0$ and $\infty$ in $(\P^1_{\Ct})^\an$. More generally, we could take $B_t$ is given by $p(t), q(t)$ for distinct functions $p,q$ which are meromorphic on $\D$ and holomorphic on $\D^*$. Then, there exists an isomorphism of pairs $(\P^1 \times \D^*, [p(t)] + [q(t)]) \simeq (\P^1 \times \D^*, [0] \times \D^* + [\infty] \times \D^*)$. This extends to an isomorphism $(\P_{\Ct}^1)^\an \setminus \{p, q\} \simeq (\P_{\Ct}^1)^\an \setminus \{ 0, \infty \}$, where $p, q$ denote the type 1 points corresponding to $p(t)$ and $q(t)$. Thus, as $t \to 0$, the Haar measure on $\P^1 \setminus \{ p(t), q(t) \}$ converges to the Lebesgue measure on the unique line joining the points $p$ and $q$ in $(P^1_\Ct)^\an \setminus \{\tilde{p}, \tilde{q}\}$. 
\end{eg}

\begin{eg}
  Generalizing the above example, let $X = \P^1 \times \D^*$ denote the constant family. Let $B = \{ z^2 + a_1z + a_2 = 0  \} \subset \P^1 \times \D^*$, where $z$ denotes the coordinate on $\P^1$ and $a_1, a_2$ are functions that are meromorphic on $\D$ and holomorphic on $\D$. Then, $(X,B)$ is log-Calabi-Yau. Also assume that the polynomial $z^2 + a_1 z + a_2 \in \Ct[z]$ is irreducible.

  Fix a square root $u = \sqrt{t}$ and consider the field extension $\Ct \to \Cu$. This corresponds to a degree two map $\D^* \to \D^*$ given by $u \to u^2$. Let $Y$ denote the fiber product of $X \times_{\D^*} \D^*$.
  The polynomial $z^2 + a_1 z + a_2 \in \Ct[z]$ splits into factors $(z - p)(z - q)$ in $\Cu[z]$. By the previous example, as $u \to 0$, the Haar measure on $\P^1 \setminus \{ p(u), q(u) \}$ converges to the Lebesgue measure on the line joining $p$ and $q$ in $(\P^1_\Cu)^\an \setminus \{ p,q \}$. Call this measure $\widetilde{\mu_0}$. We have a map $(\P^1_\Cu)^\an \setminus \{ p,q \} \to (\P^1_\Ct)^\an \setminus B^\an$.
  
  To understand the convergence of the Haar measure on $\P^1 \setminus B_t$, note that $\P^1 \setminus B_t \simeq \P^1 \setminus \{ p(u), q(u) \}$. Thus, as $t \to 0$ the Haar measure on $\P^1 \setminus B_t$ converges to the pushforward of $\widetilde{\mu_0}$  to $(\P^1_\Ct)^\an \setminus B^\an$.    
\end{eg}

\begin{eg}
  \label{EgToricVariety2}
Following up on \ref{EgToricVarieties}, we get that the (scaled) Haar measure on the constant family of tori $T = N \otimes \C^*$ converges to the Lebesgue measure on $\R^n$. For any smooth projective toric compactification $Y$ of $T$ with boundary divisor $D$,  the image of $\Delta(Y,D) \subset T_\Ct^\an$ coincides with the image of $N_R \hookrightarrow T_\Ct^\an$ given by sending $\sum n_i \otimes a_i \in N_\R$ to the seminorm $|\sum_j a_j \chi^m_j| = \max_j \{ |a_j| e^{\sum_{i}r_i \langle m_j , n_i \rangle} \}$.  
  \end{eg}

\subsection{Convergence for Log-Canonical Pairs $(X,B)$}
In this subsection, we drop the assumption that $X$ is smooth and prove Theorem \ref{MainTheoremConvergenceLogCanonical}. Suppose that $X$ is a normal projective family of analytic varieties over $\D^*$ and $B$ is $\Q$-divisor in $X$ such that $(X,B)$ is  a sub-log-canonical, log-Calabi-Yau pair with analytical singularities at $0$. Let $\eta \in H^0(X, m(K_{X/\D^*} + B))$ be a generating section. 

Let $\pi: (Y,B') \to (X,B)$ be a log resolution of singularities. Here, $B' = \pi^*(B) + E$, where $E$ is the exceptional divisor of $\pi$. Note that $K_{Y/\D^*}+B' = \pi^*(K_{X/\D^*}+B)  \sim_{\Q}0 $. Therefore, the pair $(Y',B')$ is log-smooth, sub-log-canonical and log-Calabi-Yau. Let $\eta' \in H^0(Y, m(K_{Y/\D^*}+B'))$ denote the section $\eta' = \pi^*(\eta)$. Applying Theorem \ref{MainThmSmoothLC} to $Y$, we get that there exist $\kappa_\min \in \Q, d \in \N_{+}$ such the measures $\mu_t' = \frac{i^{n^2}\eta'_t \wedge \overline{\eta_t'}}{(2\pi\log|t|^{-1})^d |t|^{2\kappa_\min}}$ converge weakly to a measure $\mu_0'$ on the space $(Y,B'
)^\hyb_r$ for any $ 0 < r < 1$. 

Let $\pi^{\An}_{A_r} : (Y,B')_r \to (X,B)_r$ denote the continuous map induced by $\pi$ on the analytification. Then, it follows from a change of variable formula that $\mu_t := (\pi^{\An}_{A_r})_*(\mu'_t) = \frac{i^{n^2}\eta_t \wedge \overline{\eta_t}}{(2\pi\log|t|^{-1})^d |t|^{2\kappa_\min}}$. From the continuity of $\pi^{\An}_{A_r}$, it follows that $\mu_t \to (\pi^\An_{A_r})_*(\mu'_0)$, which finishes the proof of Theorem \ref{MainTheoremConvergenceLogCanonical}. 

\bibliography{biblio.bib}
\bibliographystyle{alpha}

\end{document}